\documentclass[12pt]{article}

\usepackage{amsmath,amsthm,amssymb}
\allowdisplaybreaks
\usepackage{mathrsfs}
\usepackage{cite}

\textwidth15.5cm
\textheight21cm
\oddsidemargin0cm
\evensidemargin0cm

\usepackage[pdftex,bookmarksopen=true,bookmarks=true,unicode,setpagesize]{hyperref}
\hypersetup{colorlinks=true,linkcolor=black,citecolor=black}

\newtheorem{theorem}{Theorem}
\newtheorem{corollary}[theorem]{Corollary}
\newtheorem{lemma}[theorem]{Lemma}
\newtheorem{proposition}[theorem]{Proposition}

\theoremstyle{remark}

\theoremstyle{remark}
\newtheorem{example}[theorem]{Example}
\theoremstyle{remark}
\newtheorem{remark}[theorem]{Remark}

\newcommand{\vol}{\operatorname{vol}}

\newcommand{\R}{\mathbb R}
\newcommand{\Dif}{\operatorname{Diff}}

\begin{document}

\vspace{-20mm}
\begin{center}{\Large \bf
Quasi-invariance of completely random measures}
	\end{center}

{\large Habeebat O. Ibraheem}\\ Department of Mathematics,
Swansea University, Singleton Park, Swansea SA2 8PP, U.K.;
e-mail: \texttt{e.lytvynov@swansea.ac.uk}\vspace{2mm}

{\large Eugene Lytvynov}\\ Department of Mathematics,
Swansea University, Singleton Park, Swansea SA2 8PP, U.K.;
e-mail: \texttt{e.lytvynov@swansea.ac.uk}\vspace{2mm}

{\small

\begin{center}
{\bf Abstract}
\end{center}
\noindent 
Let $X$ be a locally compact Polish space. Let $\mathbb K(X)$ denote the space of discrete Radon measures on $X$. Let $\mu$ be a completely random discrete measure on $X$, i.e., $\mu$ is (the distribution of) a completely random measure on $X$ that is concentrated on $\mathbb K(X)$. We consider the multiplicative (current) group $C_0(X\to\R_+)$ consisting of functions on $X$  that take values in $\R_+=(0,\infty)$ and are equal to 1 outside a compact set. 
 Each element $\theta\in C_0(X\to\R_+)$ maps $\mathbb K(X)$ onto itself; more precisely, $\theta$ sends a discrete Radon measure $\sum_i s_i\delta_{x_i}$ to $\sum_i \theta(s_i)s_i\delta_{x_i}$. 
 Thus, elements of $C_0(X\to\R_+)$ transform the weights of  discrete Radon measures.
 We study conditions under which the measure $\mu$ is quasi-invariant under the action of the current group $C_0(X\to\R_+)$ and consider several classes of examples. We further assume that $X=\R^d$ and consider the group of local diffeomorphisms $\Dif_0(X)$. Elements of this group also map  $\mathbb K(X)$ onto itself. More precisely, a diffeomorphism $\varphi\in \Dif_0(X)$ sends a discrete Radon measure $\sum_i s_i\delta_{x_i}$ to $\sum_i s_i\delta_{\varphi(x_i)}$. Thus, diffeomorphisms from $\Dif_0(X)$ transform the atoms of  discrete Radon measures. We study quasi-invariance of $\mu$ under the action of $\Dif_0(X)$. We finally consider the semidirect product $\mathfrak G:=\Dif_0(X)\times C_0(X\to \mathbb R_+)$ and study conditions of quasi-invariance and partial quasi-invariance of $\mu$ under the action of $\mathfrak G$.
	 } \vspace{2mm}
	
\textbf{Key Words:}  Random measure; point process; Poisson point process; completely random measure; current group; diffeomorphism group.

{\bf 2010 MSC:} {\bf 2010 MSC} {\it Primary:} 20P05, 60G57. {\it Secondary:} 20C99, 60G55

	\section{Introduction}
	
Let $P$ be a probability measure on a sample space $\Omega$ and let $\mathfrak G$ be a group acting on $\Omega$. A fundamental question of the representation theory is whether the probability measure $P$ is quasi-invariant with respect to this action. The latter means that, for each element $g\in\mathfrak G$, the pushforward of $P$ under $g$, denoted by $P^g$, is  equivalent to the measure $P$, so that the Radon--Nikodym density $\frac{dP^g}{dP}$ exists and is strictly positive $P$-a.e. If this holds, one can construct a unitary representation of the group $\mathfrak G$ in $L^2(\Omega,P)$. To this end, for each $g\in\mathfrak G$, one defines a unitary operator $U_g$ in $L^2(\Omega,P)$ by 
$$(U_gf)(\omega)=f(g^{-1}\omega)\sqrt{\frac{dP^g}{dP}(\omega)}.$$
Such a representation of $\mathfrak G$ is sometimes called quasi-regular.

In the case where the group $\mathfrak G$ is big, the problem of quasi-invariance of $P$ with respect to the action of $\mathfrak G$ may be very difficult.

Let us consider an important example of such a construction. Let $X=\mathbb R^d$  and let $dx$ be the Lebesgue measure on $X$. Denote by $\Omega=\Gamma(X)$ the space of locally finite subsets of $X$ (configurations). Let $P=\pi_z$ be the Poisson measure on $X$ with intensity measure $z\,dx$, where $z>0$ is a fixed constant.  
Let $\mathfrak G=\Dif_0(X)$ be the group of diffeomorphisms of $X$ which are equal to the identity outside a compact set. Elements of $\varphi\in\Dif_0(X)$ naturally act on $\Gamma(X)$ by moving each point of the configuration. The measure $\pi_z$ appears to be quasi-invariant with respect to the action of $\Dif_0(X)$. In particular, for each $\varphi\in \Dif_0(X)$, the Radon--Nikodym derivative is given by
$$\frac{d\pi_z^\varphi}{d\pi_z}(\gamma)=\prod_{x\in\gamma}J_\varphi(x),\quad\gamma\in\Gamma(X).$$
  Here $J_\varphi$ is the modulus of the determinant of the Jacobian matrix of $\varphi$. 
 As a result, we construct a unitary representation of $\Dif_0(X)$ in $L^2(\Gamma(X),\pi_z)$.

The problem of representations of the group of diffeomorphisms of a smooth (noncompact) Riemannian manifold $X$ in the $L^2$-space with respect to a Poisson measure is a classical one. The fundamental paper   \cite{VGG} by Vershik, Gel'fand, and Graev is a standard reference here.

Let us note that  representations of the semidirect product of the additive group $C^\infty(X)$  and $\Dif_0(X)$ in $L^2(\Gamma(X),P)$ are important for nonrelativistic quantum mechanics, see e.g.\ \cite{G1,GGPS,GMS} and the references therein. Here $P$ is a probability measure on the configuration space $\Gamma(X)$, in particular, $P$ can be a Poisson measure.  

The representations of the diffeomorphism group $\Dif_0(X)$ in the $L^2$-space with respect to a Poisson measure naturally led Albeverio, Kondratiev and R\"ockner \cite{AKR1,AKR2} to defining elements of differential geometry on the configuration space $\Gamma(X)$ (directional derivative, gradient, tangent space), and developing related analysis on the configuration space equipped with Poisson measure, or more generally, with a Gibbs measure (the Laplace operator, the heat semigroup), and studying the corresponding stochastic processes (Brownian motions) on the configuration space, see also  \cite{KLR,MR,AKR3,free_dynamics}. Laplace operators on the differential forms over the configuration space $\Gamma(X)$ equipped with Poisson measure (and more generally, with a Gibbs measure) were studied by Albeverio, Daletskii and Lytvynov in \cite{ADL1,ADL2,ADL3}.

Tsilevich, Vershik, and Yor \cite{bibb3} studied quasi-invariance of the gamma measure with respect to the action of the multiplicative group $C_0(X\to\R_+)$. This group consists of functions on $X$ which take values in $\R_+$ and are equal to 1 outside a compact set. The gamma measure is a random measure on $X$; it belongs to the class of measure-valued L\'evy processes. This random measure takes almost surely values in the space $\mathbb K(X)$ of discrete Radon measures on $X$. The latter space consists of Radon measures of the form $\sum_i s_i\delta_{x_i}$, where $s_i>0$ and $\delta_{x_i}$ is the Dirac measure with mass at $x_i$. Each element $\theta\in C_0(X\to\R_+)$ maps $\mathbb K(X)$ onto itself; more precisely, $\theta$ sends the discrete Radon measure $\sum_i s_i\delta_{x_i}$ to $\sum_i \theta(s_i)s_i\delta_{x_i}$. The (distribution of) the gamma measure appears to be quasi-invariant under the action of $C_0(X\to\R_+)$.

One can naturally define the semidirect product $\mathfrak G$ of the diffeomorphism group $\Dif_0(X)$ and $C_0(X\to\R_+)$. This group consists of all pairs $(\varphi,\theta)\in \Dif_0(X)\times C_0(X\to\R_+)$ and it naturally acts on the space of discrete Radon measures, $\mathbb K(X)$: for each $\sum_i s_i\delta_{x_i}\in\mathbb K(X)$, its image under the action of $(\varphi,\theta)$ is equal to 
$\sum_i \theta(\varphi(x_i))s_i\delta_{\varphi(x_i)}$. However, it appears that, if the underlying space $X$ is not compact, the gamma measure is not quasi-invariant with respect to the action of $\mathfrak G$. Kondratiev, Lytvynov, and Vershik \cite{ba1} suggested the notion of partial quasi-invariance and proved that the gamma measure, and more generally, a class of measure-valued L\'evy processes, are partially quasi-invariant with respect to the action of $\mathfrak G$. 
The main point of this definition is that, despite absence of quasi-invariant, one can still derive analysis and geometry on space $\mathbb K(X)$ equipped with such a measure. One can again construct a gradient, a tangent space, and an associated Laplace operator on $\mathbb K(X)$, see  \cite{ba1}. Markov processes on $\mathbb K(X)$ which correspond to these Laplace operators are constructed by Conache,  Kondratiev, and Lytvynov \cite{CKL}.

Measure-valued L\'evy processes form a subclass of completely random measures. A completely random measure \cite{Kal,bib0,bib01} is a random measure on $X$ whose values are independent on mutually disjoint sets. We will actually deal with the important class of completely random measures which are discrete Radon measures, i.e., their distribution, $\mu$, is a probability measure on $\mathbb K(X)$.

The main problem we solve in this paper is: Under which conditions is a completely random discrete measure $\mu$ quasi-invariant, or partially quasi-invariant with respect to the action of $\mathfrak G$, the semidirect product of the diffeomorphism group $\Dif_0(X)$ and $C_0(X\to\R_+)$? Our results here extend the related results of \cite{ba1}. We also refer  to the papers \cite{KLU,AKLU} which discuss quasi-invariance of a compound Poisson process with respect to the action of the group $\mathfrak G$, or its generalization where $\mathbb R_+$ is replaced with   a Lie group. Also the results on quasi-invariance of the gamma measure with respect to the action of $C_0(X\to\R_+)$ were extended to Poisson processes on $X\times\R_+$ by    Lifshifts and  Shmileva \cite{bib1}. (Note that the problem
of quasi-invariance of a completely random discrete measure is related to the problem of quasi-invariance of the Poisson process on $X\times\R_+$.)

The paper is organized as follows. In Section \ref{yjd7k}, we recall the 
main notions related to completely random measures.  We fix a locally compact Polish space $X$ with its Borel $\sigma$-algebra $\mathcal B(X)$.  We denote by $\mathbb M(X)$ the set of all Radon measures on $X$, and the Borel $\sigma$-algebra on $\mathbb M(X)$ is denoted by $\mathcal B(\mathbb M(X))$. We define a random  measure as a measurable mapping from a probability space that takes values in $\mathbb M(X)$. Since we are only interested in the distribution of such a mapping, we agree to call any probability measure $\mu$ on $(\mathbb M(X), \mathcal B(\mathbb M(X)))$ a random measure. 
We define the configuration space $\Gamma(X)$ as a subset of  $\mathbb M(X)$, and we define a (simple) point process as a random measure which is concentrated on $\Gamma(X)$. We further recall the notion of  the  Poisson point process $\pi_\sigma$ with intensity measure $\sigma$. Here $\sigma$ is a non-atomic Radon measure on $(X,\mathcal B(X))$. We discuss the classical result about equivalence of two Poisson point processes,  $\pi _\rho$ and $\pi_\lambda$, \cite{bib2,bib3,bib1}. We also discuss the notion and construction  of a completely random measure \cite{bib0}. 
  We finally define a completely random discrete measure as a completely random measure which is concentrated on $\mathbb K(X)$.

 The results of the paper are in Sections~\ref{cte65i}--\ref{ydr637}. Here we study quasi-invariance of completely random discrete measures.

 In Section \ref{cte65i}, we assume that $X$ is a locally compact Polish space, and   we present sufficient conditions for a completely random discrete measure to be quasi-invariant under the action of the group $C_0(X\to\mathbb R_+)$ onto $\mathbb K(X)$ (transformations of weights of  Radon measures). 
 
 Note that, for  measure-valued L\'evy  processes, several conditions of their quasi-invariance  under the action of the group $C_0(X\to\mathbb R_+)$ onto $\mathbb K(X)$ were derived in \cite{ba1}. For a measure-valued L\'evy process, its L\'evy measure is  a measure on $X\times\R_+$ which is a product measure:  $dm(x,s)=d\sigma(x)\,d\nu(s)$, where $\sigma$ is a reference measure on $X$, while the L\'evy process is determined by the measure $\nu$ on $\R_+$. However, for a general completely random measure, its L\'evy measure $m$ does not have anymore the product structure. This creates  technical difficulties when discussing their quasi-invariance. So  in Section \ref{cte65i} we overcome these problems and present a number of criteria of the quasi-invariance of general completely random measures.

 We also consider three classes of  examples of application of these results. We first  discuss quasi-invariance of a completely random gamma measure. The latter random measure has the property that its L\'evy measure is a measure on $X\times\mathbb R_+$ of the form
 $$dm(x,s)=\beta(x)\,\frac{e^{-s/\alpha(x)}}s\,d\sigma(x)\,ds,$$
 where $\sigma$ is a fixed nonatomic Radon measure on $X$ (typically $d\sigma(x)=dx$ if $X=\mathbb R^d$) and $\alpha:X\to\mathbb R_+$ and $\beta:X\to[0,\infty)$ are measurable functions satisfying certain conditions.

Next, we consider a  class of completely random measures whose L\'evy measure is such that, for small values of $s$,
 $$dm(x,s)=\beta(x)(-\log s)^{\alpha(x)}\,d\sigma(x)\,ds,$$
 where $\alpha,\beta:X\to\mathbb R_+$ are measurable functions satisfying certain conditions. 
 
 And finally, we consider a  class of completely random measures whose L\'evy measure is such that, for small values of $s$,
 $$dm(x,s)=\beta(x)s^{1-\alpha(x)}\,d\sigma(x)\,ds,$$
 where $\alpha:X\to(0,1)$ and $\beta:X\to\mathbb R_+$ are measurable functions satisfying certain conditions.
 
    In Section \ref{ty77o897}, we assume that $X=\mathbb R^d$  and   we present sufficient conditions for a completely random discrete measure to be quasi-invariant under the action of  the diffeomorphism group $\Dif_0(X)$ 
 onto $\mathbb K(X)$ (transformations of atoms of  Radon measures).  We also consider applications of these results to the the three classes of examples we mentioned above.

 Finally, in Section \ref{ydr637}, we discuss quasi-invariance and partial quasi-inva\-riance 
 of a completely random discrete measure  under the  action of the semidirect product  $\mathfrak G$  of the groups $C_0(X\to\mathbb R_+)$ and $\Dif_0(X)$ onto $\mathbb K(X)$ (transformations of both weights and atoms of  Radon measures), and we also consider examples.

\section{Completely random measures}\label{yjd7k}

Let $X$ be a locally compact Polish space, and let $\mathcal B(X)$ denote the Borel $\sigma$-algebra on $X$.  
A measure $\eta$ on $(X,\mathcal{B}(X))$ is called a {\it Radon measure} if $\eta (\Lambda) < \infty$ for any compact  $\Lambda \subset X$. We denote by $\mathbb{M}(X)$ the set of all Radon measures on $X$. One defines the {\it vague topology on $\mathbb{M}(X)$} as the weakest topology on $\mathbb{M}(X)$ with respect to which any mapping of the following form is continuous:
\begin{equation}
\label{vg11}
\mathbb{M}(X)\ni \eta \mapsto \int _{X}f\,d\eta =:\langle f,\eta\rangle \in \mathbb{R}. 
\end{equation}
Here $f \in C_0(X)$, i.e., $f$ is a continuous function $f:X\rightarrow \mathbb{R}$ with compact support. We denote by $\mathcal{B}(\mathbb{M}(X))$ the Borel $\sigma$-algebra on $\mathbb{M}(X)$.

\begin{remark}
There is another way of characterization of $\mathcal{B}(\mathbb{M}(X))$. We denote by $\mathcal{B}_0(X)$ the collection of all sets from $\mathcal{B}(X)$ which have compact closure. Then one can show (see e.g.\ \cite{Kal}) that $\mathcal{B}(\mathbb{M}(X))$ is the minimal $\sigma$-algebra on $\mathbb{M}(X)$ with respect to which every mapping of the following form is measurable:
$$\mathbb{M}(X)\ni \eta\mapsto \eta(\Lambda)={\langle}\chi_{\Lambda}, \eta {\rangle} \in \mathbb{R},$$
 for each $\Lambda \in \mathcal{B}_0(X)$. Here $\chi _\Lambda$ denotes the indicator function of $\Lambda$.
 \end{remark}

Let $(\Omega,\mathcal{F},P)$ be a probability space. A measurable mapping 
$\xi :\Omega\to\mathbb{M}(X)$ is called a {\it random measure}. 
In most cases, we will only be interested in the distribution of a random measure on $\mathbb M(X)$. This is why we will often think of a random measure as a probability measure $\mu$ on $(\mathbb M(X),\mathcal B(\mathbb M(X)))$. In the latter case, $(\Omega,\mathcal{F},P)=(\mathbb M(X),\mathcal B(\mathbb M(X)),\mu)$ and the mapping $\xi$ is just the identity.

 Next, we will discuss a special subset of the set of random measures known as (simple) point processes. 
 The {\it configuration space over $X$} is defined by
 $$\Gamma(X):=\{\gamma\subset X\mid  |\gamma\cap\Lambda|<\infty\text{ for each compact } \Lambda\subset X\}.$$
 Here, for a set $A$, $|A|$ denotes the cardinality of $A$.
  Elements $\gamma$ of $\Gamma (X)$ are called {\it configurations in $X$}. 
  One identifies a configuration $\gamma \in \Gamma (X)$ with the  measure $\sum _{x \in \gamma}\delta_ {x}$. Here $\delta _x $ is the Dirac measure with mass at $x$.
Since a configuration $\gamma$ contains a finite number of points in each compact set, the measure $\sum _{x \in \gamma}\delta_ {x}$ is Radon. Hence, in the sense of this identification, we get the inclusion $\Gamma (X)\subset \mathbb{M}(X)$.

On $\Gamma (X)$ one defines the {\it vague topology} as  the trace of the vague topology on $\mathbb{M}(X)$. That is, the vague topology on $\Gamma (X)$ is the weakest topology on $\Gamma (X)$ with respect to which every mapping of the following form is continuous:
 $$\Gamma(X)\ni\gamma\mapsto\langle f,\gamma\rangle=\sum_{x \in  \gamma}f({x})\in \mathbb{R},$$
 where $f\in C_0(X)$. 
 One denotes by $\mathcal{B}(\Gamma (X))$ the corresponding Borel $\sigma$-algebra on $\Gamma (X)$. One can show that $\Gamma (X) \in \mathcal{B}(\mathbb{M}(X))$ and $\mathcal{B}(\Gamma(X))$ is the trace $\sigma$-algebra of $\mathcal{B}(\mathbb{M}(X))$ on $\Gamma (X)$.

 Let $(\Omega,\mathcal{F},P)$ be a probability space. A measurable mapping 
$\gamma :\Omega\to\Gamma(X)$ is called a {\it (simple) point process}. In particular, a point process is a random measure.
 Similarly to the case of random measures, we will often understand by a point process a  
 probability measure $\mu$  on $(\Gamma (X),\mathcal{B}(\Gamma(X)))$.

Let $\sigma$ be a Radon measure on $(X,\mathcal{B}(X))$ and let us assume that $\sigma$ is nonatomic, i.e., $\sigma(\{x\})=0$ for every $x\in X$. A {\it Poisson point process with intensity measure $\sigma$} is defined as the unique probability measure $\pi_{\sigma}$ on $\Gamma(X)$ which has Fourier transform
\begin{equation}
\int_{\Gamma(X)}e^{i\langle f,\gamma \rangle}\,d\pi_{\sigma}(\gamma)=\exp\left[\int_X(e^{if(x)}-1)d\sigma (x)\right] \label{a1}
\end{equation} for all $f\in C_0(X)$. See e.g.\cite{bib01} for further details.

 Let $\rho$ and $\lambda$ be  non-atomic Radon measures on $X$. Then we can construct  Poisson point processes (or Poisson measures) on $\Gamma(X)$ with intensity $\rho$ and $\lambda$, respectively, denoted by $\pi _\rho$ and $\pi _\lambda$. Now, the following question arises:  When are these measures equivalent, i.e. when is $\pi_\rho$ equivalent to $\pi_\lambda$? The  theorem below follows from Skorohod's result \cite{bib2}, from its extension by Takahashi \cite{bib3} to the case of a rather general underlying space, and from  Lifshits and Shmileva's result \cite[Theorem 2]{bib1}.

 \begin{theorem}\label{vufr76}
 Let $X$ be a locally compact  Polish space. Let $\rho$ and $\lambda$ be non-atomic Radon measures on $(X,\mathcal B(X))$.
The Poisson  measures $\pi_\rho$ and $\pi_\lambda$ are equivalent if and only if:
\begin{enumerate}
\item $\rho$ and $\lambda$ are equivalent;
\item if density $\phi:=\frac{d\rho}{d\lambda}$, then
\begin{equation}\label{poi2}
\int_X\left(\sqrt{\phi}-1\right)^2d\lambda<\infty.\end{equation}
\end{enumerate}
In the latter case,
\begin{align}
\frac{d\pi_\rho}{d\pi_\lambda}&=\exp \bigg[(\lambda - \rho)(X\setminus A)+\int_{X\setminus A}\log\phi\, d\lambda \notag\\
&\quad  +\int _ A(\log \phi -\phi +1)d\lambda +\int _ A \log\phi\, d(\gamma - \lambda)\bigg],\label{f7ur76}
\end{align}
 where 
 $A:=\left\{x\in X \mid |1-\phi(x)|< \frac{1}{2}\right\}$.
 \end{theorem}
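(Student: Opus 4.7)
The plan is to reduce the question to bounded Borel sets, where Poisson measures admit an explicit series representation, and then lift back to $X$ by exploiting the independence of Poisson measures on disjoint sets together with Kakutani's dichotomy for infinite product measures.

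First, I would dispatch the necessity of condition (1). If $\pi_\rho$ is equivalent to $\pi_\lambda$ and $B\in\mathcal B_0(X)$ satisfies $\lambda(B)=0$, then \eqref{a1} applied with $f=t\chi_B$ gives $\pi_\lambda(\{\gamma:\gamma(B)=0\})=1$, hence the same holds $\pi_\rho$-a.s., and \eqref{a1} then forces $\rho(B)=0$. Symmetry gives $\rho\sim\lambda$, so we may set $\phi:=d\rho/d\lambda$.

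Second, choose a countable partition $X=\bigsqcup_n\Lambda_n$ with $\Lambda_n\in\mathcal B_0(X)$. The Fourier formula \eqref{a1} factorises over disjoint sets, whence
\begin{equation*}
\pi_\rho=\bigotimes_n\pi_{\rho|_{\Lambda_n}},\qquad \pi_\lambda=\bigotimes_n\pi_{\lambda|_{\Lambda_n}}.
\end{equation*}
On each bounded $\Lambda_n$, the Poisson measure has the well-known representation as a weighted sum of symmetrised product measures, and a direct computation (comparing the $k$-point densities) shows that $\pi_{\rho|_{\Lambda_n}}\ll\pi_{\lambda|_{\Lambda_n}}$ with
\begin{equation*}
\frac{d\pi_{\rho|_{\Lambda_n}}}{d\pi_{\lambda|_{\Lambda_n}}}(\gamma)=\exp\bigl[\lambda(\Lambda_n)-\rho(\Lambda_n)\bigr]\prod_{x\in\gamma\cap\Lambda_n}\phi(x).
\end{equation*}
Third, I would apply Kakutani's theorem: the infinite product $\pi_\rho$ is equivalent to $\pi_\lambda$ iff $\prod_n H_n>0$, where $H_n$ is the Hellinger affinity of the $n$-th factor pair. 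A short calculation using the displayed local density yields
\begin{equation*}
H_n=\exp\!\left[-\tfrac12\int_{\Lambda_n}(\sqrt\phi-1)^2\,d\lambda\right],
\end{equation*}
so $\prod_n H_n>0$ is exactly condition \eqref{poi2}; moreover, when this fails, the same dichotomy shows $\pi_\rho\perp\pi_\lambda$, completing both directions of the equivalence.

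Fourth, and this is the main technical obstacle, I would derive the density formula \eqref{f7ur76}. Taking the product of the local densities suggests
\begin{equation*}
\log\frac{d\pi_\rho}{d\pi_\lambda}(\gamma)\;=\;\lambda(X)-\rho(X)+\int_X\log\phi\,d\gamma,
\end{equation*}
but in general each term on the right is individually infinite. The remedy is the regularisation set $A=\{|1-\phi|<1/2\}$. Outside $A$, condition \eqref{poi2} forces $\lambda(X\setminus A)<\infty$ (since $(\sqrt\phi-1)^2$ is bounded below on the complement), and this makes both $(\lambda-\rho)(X\setminus A)$ and $\int_{X\setminus A}\log\phi\,d\lambda$ finite and handles the configuration points in $X\setminus A$ directly. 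On $A$ one Taylor-expands: $|\log\phi-\phi+1|\lesssim(\phi-1)^2\lesssim(\sqrt\phi-1)^2$ on $A$, so $\int_A(\log\phi-\phi+1)\,d\lambda$ is absolutely convergent, while the residual term $\int_A\log\phi\,d(\gamma-\lambda)$ is interpreted as the limit in $\pi_\lambda$-probability (and $\pi_\lambda$-a.s.) of the compensated sums $\sum_n\int_{\Lambda_n\cap A}\log\phi\,d(\gamma-\lambda)$; convergence follows from Kolmogorov's two-series theorem together with the Poisson variance identity, using the estimate $\int_A(\log\phi)^2\,d\lambda<\infty$ that is again a consequence of \eqref{poi2}. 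Rearranging the four pieces recovers \eqref{f7ur76}. The delicacy lies precisely in splitting the formal divergences into the four convergent summands and justifying that their sum is the pointwise limit of the finite-partition densities, which I would do by monotone/bounded convergence along an exhausting sequence of compact sets.
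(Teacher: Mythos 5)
The paper does not actually prove Theorem \ref{vufr76}: it is quoted from Skorohod, Takahashi and Lifshits--Shmileva, so there is no internal proof to compare against. Your route --- factorizing $\pi_\rho$ and $\pi_\lambda$ over a countable partition of $X$ into bounded pieces, computing the local densities $e^{\lambda(\Lambda_n)-\rho(\Lambda_n)}\prod_{x\in\gamma\cap\Lambda_n}\phi(x)$ explicitly, and invoking Kakutani's dichotomy with the Hellinger affinity $H_n=\exp\bigl[-\tfrac12\int_{\Lambda_n}(\sqrt\phi-1)^2\,d\lambda\bigr]$ --- is precisely the standard proof from that literature, and it delivers both directions of the equivalence criterion (including mutual singularity when \eqref{poi2} fails). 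The necessity of condition (1) and the estimates on $A$ (namely $|\log\phi-\phi+1|\lesssim(\sqrt\phi-1)^2$ and $(\log\phi)^2\lesssim(\sqrt\phi-1)^2$ on $\{|1-\phi|<\tfrac12\}$, which justify the compensated sum via the two-series theorem) are all correct. One presentational loose end: the a.s.\ convergence of the partial products of local densities to the limiting density is a martingale convergence statement, not an instance of monotone or bounded convergence.

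There is, however, one concrete error in your fourth step, and it points at what is almost certainly a typo in the paper's formula \eqref{f7ur76}. You assert that \eqref{poi2} forces $\int_{X\setminus A}\log\phi\,d\lambda$ to be finite. It does not: take $\phi=e^{-n}$ on disjoint sets $E_n$ with $\lambda(E_n)=n^{-2}$ and $\phi=1$ elsewhere; then $\int_X(\sqrt\phi-1)^2\,d\lambda\le\sum_n n^{-2}<\infty$ while $\int_{X\setminus A}|\log\phi|\,d\lambda=\sum_n n^{-1}=\infty$, so the Poisson measures are equivalent but that integral diverges. What \eqref{poi2} actually gives is $\lambda(X\setminus A)<\infty$ and $\rho(X\setminus A)<\infty$ (since $(\sqrt\phi-1)^2$ is bounded below, and below by a multiple of $\phi$, on $\{|1-\phi|\ge\tfrac12\}$), hence $|\gamma\cap(X\setminus A)|<\infty$ a.s.; the term your decomposition genuinely produces on $X\setminus A$ is therefore the a.s.\ finite sum $\int_{X\setminus A}\log\phi\,d\gamma$, not an integral against $\lambda$. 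That is the correct term, and the $d\lambda$ in the second summand of \eqref{f7ur76} should be read as $d\gamma$. So keep your decomposition, but replace the finiteness claim for $\int_{X\setminus A}\log\phi\,d\lambda$ by the observation that $\gamma$ charges $X\setminus A$ only finitely often.
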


\begin{remark}\label{prop1}
As easily seen, if we assume that
\begin{equation}\label{poi3}
 \phi -1\in L^1(X,\lambda), \end{equation}
 then condition \eqref{poi2}  holds as well,
 i.e.,  \eqref{poi3} implies \eqref{poi2}. 
  \end{remark}
  
As we see from \eqref{f7ur76}, the density $\frac{d\pi_\rho}{d\pi_\lambda}$ has a rather complicated form.
 This is why we will not use Theorem \ref{vufr76} in this paper. Instead, we will use the following stronger condition on $\phi$ to get a much  simpler form of $\frac{d\pi_\rho}{d\pi_\lambda}$.  The following theorem is taken from Takahashi \cite{bib3}.  (In fact, Theorem \ref{thrma}  is used to prove  Theorem \ref{vufr76} in  \cite{bib3}).

\begin{theorem}\label{thrma}
 Let $X$ be a locally compact  Polish space. Let $\rho$ and $\lambda$ be non-atomic Radon measures on $(X,\mathcal B(X))$.
Assume $\lambda$ and $\rho$ are equivalent and denote the density $\phi: = \frac{d\rho}{d\lambda}$. Assume that condition \eqref{poi3} holds. Then $\pi _\rho$ and $\pi_\lambda$ are equivalent and
\begin{equation}\label{poi4}
\frac{d\pi_ \rho}{d\pi_ \lambda}(\gamma)=\exp\left[\langle\gamma ,\log\phi\rangle +\int _X(1-\phi)\,d\lambda\right],
\end{equation}
where $|\log \phi|\in L^1(X,d\gamma)$ for $\pi_\lambda$-a.a.\ $\gamma\in\Gamma(X)$.
\end{theorem}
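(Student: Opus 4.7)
The plan is to establish \eqref{poi4} directly by matching characteristic functionals. Introduce
$$R(\gamma):=\exp\Bigl[\langle\gamma,\log\phi\rangle+\int_X(1-\phi)\,d\lambda\Bigr]$$
and verify that (i) $R$ is $\pi_\lambda$-a.s.\ well defined and strictly positive, and (ii) $\int_{\Gamma(X)}e^{i\langle f,\gamma\rangle}R(\gamma)\,d\pi_\lambda(\gamma)$ coincides with the right-hand side of \eqref{a1} (with $\sigma=\rho$) for every $f\in C_0(X)$. Since \eqref{a1} characterizes $\pi_\rho$ uniquely, this yields $R\,d\pi_\lambda=d\pi_\rho$, and taking $f=0$ simultaneously confirms that $R\,d\pi_\lambda$ is a probability measure.

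For step (i), set $A:=\{x\in X\mid e^{-1}\leq\phi(x)\leq e\}$. On $A$ the logarithm is Lipschitz, so $|\log\phi|\leq e\,|\phi-1|$, and hence $|\log\phi|\mathbf{1}_A\in L^1(X,\lambda)$; by Campbell's formula for Poisson processes, $\sum_{x\in\gamma\cap A}|\log\phi(x)|<\infty$ for $\pi_\lambda$-a.a.\ $\gamma$. On $A^c$, Chebyshev's inequality gives $\lambda(A^c)\leq(1-e^{-1})^{-1}\int_X|\phi-1|\,d\lambda<\infty$, so the Poisson count $|\gamma\cap A^c|$ is $\pi_\lambda$-a.s.\ finite, and the remaining contribution to $\langle\gamma,|\log\phi|\rangle$ is a finite sum of finite values. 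Combining the two cases yields $|\log\phi|\in L^1(X,d\gamma)$ for $\pi_\lambda$-a.a.\ $\gamma$, which also gives the integrability assertion in the theorem. Positivity of $R$ follows from $\phi>0$ $\lambda$-a.e., which in turn is a consequence of $\rho\sim\lambda$.

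For step (ii), I will invoke the complex Laplace functional identity
$$\int_{\Gamma(X)}\exp\bigl(\langle g,\gamma\rangle\bigr)\,d\pi_\lambda(\gamma)=\exp\Bigl[\int_X(e^g-1)\,d\lambda\Bigr],$$
valid for measurable complex-valued $g$ with $e^g-1\in L^1(X,\lambda)$; this extends \eqref{a1} by first restricting to bounded, compactly supported $g$ and using the independence of $\gamma$ on disjoint sets, then passing to the limit via dominated convergence. Applied to $g:=\log\phi+if$ with $f\in C_0(X)$, integrability of $e^g-1=\phi e^{if}-1$ follows from the estimate $|\phi e^{if}-1|\leq|\phi-1|+|e^{if}-1|$, where the first summand lies in $L^1(\lambda)$ by hypothesis and the second has compact support. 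The identity then gives
$$\int_{\Gamma(X)}e^{i\langle f,\gamma\rangle}R(\gamma)\,d\pi_\lambda=\exp\Bigl[\int_X(1-\phi)\,d\lambda+\int_X(\phi e^{if}-1)\,d\lambda\Bigr]=\exp\Bigl[\int_X(e^{if}-1)\,d\rho\Bigr],$$
which is precisely the characteristic functional of $\pi_\rho$.

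The main obstacle is the delicate treatment of $\langle\gamma,\log\phi\rangle$ when $\phi$ is close to $0$ or very large: the assumption $\phi-1\in L^1(X,\lambda)$ does \emph{not} imply $\log\phi\in L^1(X,\lambda)$, so Campbell's formula cannot be applied globally. The truncation at the set $A$ is the essential device — it isolates the atypical behaviour onto a set of finite $\lambda$-measure, where the Poisson process almost surely has only finitely many atoms, so the sum terminates automatically on $A^c$ while the Lipschitz bound controls it on $A$.
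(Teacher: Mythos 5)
The paper does not prove this theorem at all: it is quoted verbatim from Takahashi \cite{bib3} (the text explicitly says the result ``is taken from Takahashi''), so there is no internal argument to compare yours against. Your proposal supplies a correct, self-contained proof along the standard route — matching characteristic functionals of $R\,d\pi_\lambda$ and $\pi_\rho$ — and the two genuinely delicate points are handled properly: the a.s.\ absolute summability of $\langle\gamma,\log\phi\rangle$ via the split into $A=\{e^{-1}\le\phi\le e\}$ (Lipschitz bound plus Campbell's formula) and $A^c$ (finite $\lambda$-measure, hence a.s.\ finitely many atoms), and the integrability $\int\phi e^{if}\,d\lambda$-type estimates needed to evaluate the functional. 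One small imprecision: the complex exponential identity $\int e^{\langle g,\gamma\rangle}d\pi_\lambda=\exp[\int(e^g-1)\,d\lambda]$ is \emph{not} valid for arbitrary complex $g$ with merely $e^g-1\in L^1(\lambda)$ (e.g.\ $g=2\pi i k$ on a set of infinite measure makes $e^g-1=0$ while $\langle g,\gamma\rangle$ diverges); you should add the hypothesis $e^{\operatorname{Re}g}-1\in L^1(\lambda)$ (equivalently here, $\min\{|g|,1\}\in L^1(\lambda)$ together with integrability of $e^{\operatorname{Re}g}-1$), which both guarantees a.s.\ convergence of $\langle g,\gamma\rangle$ and furnishes the dominating function $\prod_{x\in\gamma}\max\{\phi(x),1\}$ for the limiting argument. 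In your application $g=\log\phi+if$ satisfies this, since $e^{\operatorname{Re}g}-1=\phi-1\in L^1(\lambda)$ by hypothesis and $f$ is bounded with compact support, so the proof goes through.
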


\begin{remark}\label{tfye6e645}
Note that, in formula \eqref{poi4},
$\exp\left[\langle\gamma ,\log\phi\rangle\right]=\prod_{x\in\gamma}\phi(x)$,
where the infinite product converges.
\end{remark}

Let us now recall the definition of a completely random measure, given by Kingman~\cite{bib0}. 
A {\it completely random measure on $X$} is defined as a random measure $\xi$ on $X$ such that, for any mutually disjoint sets $A_1,\dots,A_n\in \mathcal{B}_0(X)$ ($n\in\mathbb N$, $n\ge2$), the random variables $\xi(A_1),\dots,\xi(A_n)$ are independent.

 The following result is obtained by Kingman \cite{bib0}.
  Below we will use the notation $\mathbb R_+:=(0,\infty)$.

\begin{theorem}\label{thrm 2} {\rm (i)}
 Let $\xi_d\in\mathbb{M}(X)$ be a nonatomic Radon measure. Let  a set $\{x_n\}_{n\geq 1}\subset X$ be at most countable. Let $(a_n)_{n\geq 1}$ be a collection of independent,
 nonnegative-valued random variables such that
  \begin{equation}\label{tyr7}
  \text{for each $A\in\mathcal{B}_0(X)$:}\quad\xi_a(A):=\sum_{n=1}^{\infty}a_n\delta_{x_n}(A)<\infty \quad \text{a.s.}\end{equation}
 Let $m$ be a measure on $X\times\mathbb R_+$ such that
  \begin{equation}\label{tyr75ire}
  m(\{x\}\times\mathbb{R}_{+})=0\quad \text{for each $x\in X$,}\end{equation}
   and
    \begin{equation}\label{cmpl4.1}
 \int_{A\times\mathbb{R}_{+}}\min\{s,1\}\,dm(x,s)<+\infty\quad
\text{for each }A\in\mathcal B_0(X)
 \end{equation}
 Let $N$ be a Poisson point process on $X\times\mathbb R_+$ with intensity measure $m$. Assume that $N$ is independent of the random variables $(a_n)_{n\ge1}$.
 Define a random measure
$$\xi_r(A):=\int_{A\times\mathbb{R}_+}s\,dN(x,s).$$
Then $\xi_d$, $\xi_a$, $\xi_r$ are independent, completely random measures on $X$. Furthermore,
$\xi=\xi_d+\xi_a+\xi_r$ is a also a completely random measure on $X$.

{\rm (ii)} Let $\xi$ be a completely random measure on $X$. Then there exist independent, completely random  measures $\xi_d$, $\xi_a$, $\xi_r$ as in part {\rm (i)} such that
$\xi=\xi_d+\xi_a+\xi_r$.
 \end{theorem}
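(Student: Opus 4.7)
The plan for part (i) is to verify each of the three building blocks separately and then invoke independence. The measure $\xi_d$ is deterministic, hence trivially completely random. For $\xi_a$, given disjoint $A_1,\dots,A_n\in\mathcal B_0(X)$, the random variables $\xi_a(A_k)=\sum_{n:\,x_n\in A_k}a_n$ depend on pairwise disjoint subfamilies of the independent collection $(a_n)_{n\ge1}$, and are therefore jointly independent; finiteness on compacts is exactly \eqref{tyr7}. For $\xi_r$, I would first establish a.s.\ finiteness for each $A\in\mathcal B_0(X)$ by splitting
\begin{equation*}
\xi_r(A)=\int_{A\times(0,1]}s\,dN(x,s)+\int_{A\times(1,\infty)}s\,dN(x,s).
\end{equation*}
The second integral is a.s.\ a finite sum, because $N(A\times(1,\infty))$ has mean $m(A\times(1,\infty))\le\int_{A\times\mathbb R_+}\min\{s,1\}\,dm<\infty$ by \eqref{cmpl4.1}; for the first, Campbell's formula gives $\mathbb E\bigl[\int_{A\times(0,1]}s\,dN\bigr]=\int_{A\times(0,1]}s\,dm<\infty$. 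Complete randomness of $\xi_r$ follows because the restrictions of the Poisson process $N$ to the mutually disjoint strips $A_k\times\mathbb R_+$ are independent Poisson processes. Since $N$ is assumed independent of $(a_n)$ and $\xi_d$ is deterministic, the three measures $\xi_d,\xi_a,\xi_r$ are mutually independent as random measures, and consequently $\xi_d(A_k)+\xi_a(A_k)+\xi_r(A_k)$ for $k=1,\dots,n$ are jointly independent, proving that the sum $\xi$ is completely random.

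For part (ii), the converse, the strategy is to \emph{extract} the three parts from an arbitrary completely random measure $\xi$. First, identify the set of fixed atoms $F:=\{x\in X:\mathbb P(\xi(\{x\})>0)>0\}$; a standard argument using complete randomness on a countable exhaustion of $X$ by compacts together with the fact that $\xi(\Lambda)<\infty$ a.s.\ shows that $F$ is at most countable. Enumerate $F=\{x_n\}$, set $a_n:=\xi(\{x_n\})$, and note that the $a_n$ are independent because they are measurable with respect to $\xi$ restricted to the disjoint singletons (or approximating shrinking neighborhoods). Subtract $\xi_a:=\sum_n a_n\delta_{x_n}$ from $\xi$ to obtain a completely random measure $\xi^\flat:=\xi-\xi_a$ with no fixed atoms.

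The core step is then the L\'evy--Khintchine representation for $\xi^\flat$: for every $A\in\mathcal B_0(X)$, the random variable $\xi^\flat(A)$ is nonnegative and infinitely divisible, so its Laplace transform takes the form
\begin{equation*}
\mathbb E\bigl[e^{-\lambda\xi^\flat(A)}\bigr]=\exp\!\left[-\lambda\,c(A)-\int_{\mathbb R_+}\bigl(1-e^{-\lambda s}\bigr)\,dn_A(s)\right],\qquad\lambda\ge0.
\end{equation*}
Complete randomness forces the set functions $A\mapsto c(A)$ and $A\mapsto n_A(B)$ (for fixed $B\subset\mathbb R_+$ bounded away from $0$) to be countably additive, and the absence of fixed atoms forces $c$ to be nonatomic and $n_{\{x\}}\equiv0$. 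Therefore $c$ defines the nonatomic Radon measure $\xi_d$ on $X$, and the family $\{n_A\}$ assembles into a single measure $m$ on $X\times\mathbb R_+$ satisfying \eqref{tyr75ire} and \eqref{cmpl4.1}. Letting $N$ be a Poisson point process on $X\times\mathbb R_+$ with intensity $m$ and defining $\xi_r$ as in part (i), a comparison of Laplace functionals over a separating class of test sets identifies the distribution of $\xi^\flat$ with that of $\xi_d+\xi_r$, whence the desired decomposition holds (realised on an enlarged probability space if necessary).

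The main obstacle is the second half of part (ii): assembling the individual L\'evy measures $n_A$, indexed by $A\in\mathcal B_0(X)$, into a single Radon measure $m$ on the product space $X\times\mathbb R_+$ and then verifying that $\xi^\flat$ really is the stochastic integral of $s$ against an independent Poisson point process driven by $m$. The countable additivity of $A\mapsto n_A(B)$ in $A$ relies crucially on complete randomness, while the nonatomicity of $m$ in the first variable reflects the absence of fixed atoms in $\xi^\flat$. Once $m$ is in hand, the joint Laplace transform calculation is a routine consequence of Campbell's theorem and the Laplace functional of a Poisson process.
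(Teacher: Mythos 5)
Your proof of part (i) is correct and takes essentially the same route as the paper's: the same three-way verification, with the deterministic part trivial, the fixed-atom part handled by noting that disjoint sets pick out disjoint subfamilies of the independent $(a_n)$, and $\xi_r$ handled by splitting the weights at $s=1$, controlling the small-weight part by the first-moment identity (your Campbell's formula is the paper's Mecke identity) and the large-weight part by the a.s.\ finiteness of the point count, then deducing complete randomness from the independence of the restrictions of the Poisson process to the disjoint strips $A_k\times\mathbb R_+$. The only material you skip is bookkeeping that the paper does carry out: it passes to a countable exhaustion of $X$ by compacts to upgrade ``$\xi_r(A)<\infty$ a.s.\ for each fixed $A$'' to ``a.s., $\xi_r$ is a Radon measure'', and it sets up the locally compact Polish structure on $\hat X=X\times\mathbb R_+$, the pinpointing configurations $\Gamma_{pf}(\hat X)$ and the bijection $\mathcal R$ in order to have $\xi_r$ as a measurable $\mathbb M(X)$-valued (indeed $\mathbb K(X)$-valued) map --- machinery the paper needs later anyway.

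For part (ii) the comparison is moot: the paper explicitly proves only part (i) and attributes the full decomposition to Kingman \cite{bib0}. Your outline follows Kingman's strategy (extract fixed atoms, apply the L\'evy--Khintchine representation to the remainder, reassemble), but the decisive step --- showing that the family of L\'evy measures $n_A$, $A\in\mathcal B_0(X)$, glues into a single $\sigma$-additive measure $m$ on $X\times\mathbb R_+$ satisfying \eqref{tyr75ire} and \eqref{cmpl4.1}, and that the remainder is realised (not merely matched in one-dimensional marginal distributions) by the stochastic integral against a Poisson process with intensity $m$ --- is asserted rather than proved, as you acknowledge. So your part (ii) is an honest sketch with a genuine gap at exactly the point you identify; since the paper offers no proof of part (ii) at all, there is nothing in the paper to measure it against, and a complete argument would have to be imported from \cite{bib0}.
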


\begin{remark}
\eqref{cmpl4.1} is equivalent to
\begin{align}
\int_{A\times(0,1]}s\,dm(x,s)&<\infty,\label{ftr756}\\
\int_{A\times[1,+\infty)}dm(x,s)&<\infty\label{fty6e654}
\end{align}
for each $A\in\mathcal B_0(X)$.
\end{remark}

 \begin{remark} In fact, Kingman \cite{bib0} (see also \cite{bib01}) does not assume that a random measure takes values in the space of Radon measures.
 He allows a random measure to take values in the space of {\it all} measures on $(X,\mathcal B(X))$ and assumes that, for each $A\in\mathcal B(X)$, $\xi(A)$ is a random variable (i.e., a measurable mapping.) In that case, one does not need condition \eqref{tyr7} to hold.
  However, Daley and Vere-Jones \cite[Theorem  6.3.VIII]{bib} do assume that a random measure takes values in the space of Radon measures, but they do not assume \eqref{tyr7}. It is clear that, without this condition, a measure $\xi_a$ may not be a Radon measure (even possibly a.s.) So, Theorem~\ref{thrm 2} is a refinement of  \cite[Theorem  6.3.VIII]{bib}.

 \end{remark}

In this paper, we will only use part (i) of Theorem~\ref{thrm 2}. For the reader's convenience and for our references below, we will now present the proof of part (i)  and we will also discuss in detail the construction of the completely random measure $\xi_r$, cf.\  \cite{HKPR}, Section~3 in \cite{HKEV}, and  subsection~2.2 in \cite{ba1}. 

\begin{proof}[Proof of part (i) of Theorem~\ref{thrm 2}] 
 Since the  measure $\xi_d $ is deterministic, it is trivially a completely random measure.

Next, we need to prove that $\xi_a(A)$ is  a completely random measure. By the definition of $\xi_a(A)$, for each $A\in \mathcal{B}_0(X)$, we have that
$$\xi_a(A):=\sum_{k=1}^\infty a_k\delta_{x_k}(A)=\sum_{k\geq 1,\ x_k\in A}a_k.$$
If sets $A_1,\dots,A_n\in\mathcal B_0(X)$ are disjoint, then
the random variables $a_k$ appearing in each sum
$\xi_a(A_i)=\sum_{k\geq 1,\ x_k\in A_i}a_k$
are different, so   $\xi_a(A_1),\dots,\xi_a(A_n)$ are independent random variables. Furthermore, \eqref{tyr7} ensures that $\xi_a(A)$ is a Radon measure a.s. Thus, $\xi_a$ is a completely random measure.

Now, we need to prove that $\xi_r$ a completely random measure. Consider the product space  $\hat X:=X\times\mathbb{R}_+$ where $\mathbb{R}_+:=(0,+\infty)$. We need to make $\mathbb{R}_+$ a locally compact Polish space. Consider the bijective mapping
$\mathbb{R}\ni x\mapsto e^x\in\mathbb{R}_+$.
Its inverse mapping is the logarithm function $\ln(x)$. For $s_1, s_2 \in \mathbb{R}_+$, we then take the distance between them in $\mathbb{R}_+$ as the usual distance in $\mathbb{R}$ between $\ln(s_1)$ and $\ln(s_2)$. Thus,
$$\operatorname{dist}  (s_1,s_2)=|\ln  s_1-\ln  s_2|=\left|\ln\left(\dfrac {s_1} {s_2}\right)\right|.$$
Equipped with this metric, $\mathbb{R}_+$ is a locally compact Polish space. Taking the product of $X$ and $\mathbb{R}_+$, we obtain a locally compact Polish space $\hat X$.
The Borel $\sigma$-algebra on $\hat X$ is denoted by $\mathcal B(\hat X)$.

Next,  on the space $\hat{X}$ we want to construct a Poisson point process with intensity measure $m$.
To this end, we should prove that $m$ is a Radon measure on $\hat X$. It suffices to prove that, for each $A\in\mathcal B_0(X)$ and each closed interval $[a,b]\subset\mathbb R_+$,
$m(A\times[a,b])<\infty$.
In fact, we will prove that, for each $A\in\mathcal B_0(X)$ and $\epsilon>0$,
\begin{equation}\label{gur7ir}
m(A\times[\epsilon,\infty))<\infty.\end{equation}
By \eqref{ftr756},  for each $0<\epsilon\le 1$, we have that
\begin{equation}
\int_{A\times [\epsilon , 1]}dm(x,s) \le\int_{A\times [\epsilon , 1]}\frac s \epsilon\, dm(x,s)\le \dfrac 1 \epsilon\int_{A\times(0,1]}s\,dm(s)< +\infty.\label{VM}\end{equation}
Hence, by \eqref{fty6e654}, this implies \eqref{gur7ir}.

By \eqref{tyr75ire}, the Radon measure $m$ is nonatomic. Hence,
 we can construct  $\pi_m$,  the Poisson measure on $(\Gamma (\hat{X}),\mathcal{B}(\Gamma (\hat{X})))$ with intensity measure $m$.

  Let $\Gamma_p(\hat{X})$ denote the set of all pinpointing configurations in $\hat{X}$: \begin{equation*}
\Gamma_p(\hat{X}):=\big\{\gamma\in \Gamma(\hat{X})\mid\text{if }(x_1, s_1), (x_2, s_2)\in \gamma, (x_1, s_1)\neq(x_2, s_2), \text{ then } x_1 \neq x_2\big\}.
\end{equation*}
It is known that
\begin{equation}
\label{hyu43y}
 \Gamma_p(\hat{X})\in\mathcal{B}(\Gamma(\hat{X})),
\end{equation} see \cite{HKPR}.

By \eqref{tyr75ire} and the explicit construction of Poisson measure in a finite volume (see e.g.\ \cite{bib01}), we conclude that 
\begin{equation}\label{vmur 1}
\pi_{m}(\Gamma_p(\hat{X}))=1,
\end{equation}
i.e.,   the Poisson measure $\pi_m$ is concentrated on the set of pinpointing configurations.

Now for each $\gamma\in\Gamma_p(\hat{X})$ and $A\in \mathcal{B}_0(X)$, we define a local mass by
$$
\mathfrak{M}_A(\gamma):=\sum _{(x,s)\in \gamma}\chi
_A (x)s
=\int _{\hat{X}}\chi _A(x)s\,d\gamma(x,s)\in [0, +\infty].
$$
We then define the set of pinpointing configurations with finite local mass by
$$\Gamma _{pf}(\hat{X}):=\big\{\gamma\in\Gamma(\hat{X})\mid \mathfrak{M}_A(\gamma)<\infty \text{ for each }A\in\mathcal{B}_0(X)\big\}.$$
\begin{lemma} We have 
$\pi_m(\Gamma _{pf}(\hat{X}))=1$.
\end{lemma}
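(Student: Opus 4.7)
The plan is to reduce the statement to a countable intersection of full–measure events, split each $K\times\mathbb R_+$ into a large–$s$ and a small–$s$ part, and handle each part separately using elementary Poisson process tools.

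First, since $X$ is a locally compact Polish space, it is $\sigma$-compact; choose an exhaustion by compact sets $K_n\uparrow X$ such that every $A\in\mathcal B_0(X)$ is contained in some $K_n$ (this is possible because the closure of any $A\in\mathcal B_0(X)$ is compact and hence contained in a finite union of elements of a relatively compact open cover). Since $\mathfrak M_A(\gamma)\le \mathfrak M_{K_n}(\gamma)$ whenever $A\subset K_n$, it suffices to prove that $\mathfrak M_{K_n}(\gamma)<\infty$ for $\pi_m$-a.a.\ $\gamma$, for each fixed $n$; the claim then follows by taking a countable intersection of full-measure events and intersecting with $\Gamma_p(\hat X)$ (which has full measure by \eqref{vmur 1}).

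Fix $n$ and write $\mathfrak M_{K_n}(\gamma)=S_1(\gamma)+S_2(\gamma)$, where
\begin{equation*}
S_1(\gamma):=\sum_{(x,s)\in\gamma,\,x\in K_n,\,s\le 1} s,\qquad S_2(\gamma):=\sum_{(x,s)\in\gamma,\,x\in K_n,\,s> 1} s.
\end{equation*}
For $S_2$, note that by \eqref{fty6e654} the mass $m(K_n\times[1,\infty))$ is finite, so under $\pi_m$ the number of points of $\gamma$ lying in $K_n\times(1,\infty)$ is a Poisson random variable with finite mean, hence almost surely finite. Consequently $S_2(\gamma)$ is an a.s.\ finite sum of finite nonnegative numbers, so $S_2(\gamma)<\infty$ $\pi_m$-a.s. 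For $S_1$, we apply Campbell's formula (i.e.\ the mean measure identity for a Poisson point process with intensity $m$) to the nonnegative measurable function $f(x,s)=s\,\chi_{K_n}(x)\chi_{(0,1]}(s)$ to obtain
\begin{equation*}
\int_{\Gamma(\hat X)} S_1(\gamma)\,d\pi_m(\gamma)=\int_{K_n\times(0,1]}s\,dm(x,s)<\infty,
\end{equation*}
where finiteness is exactly \eqref{ftr756}. A nonnegative random variable with finite expectation is finite a.s., so $S_1(\gamma)<\infty$ $\pi_m$-a.s.

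Combining, $\mathfrak M_{K_n}(\gamma)<\infty$ $\pi_m$-a.s.\ for each $n$, and intersecting the resulting full-measure sets over $n$ yields $\pi_m(\Gamma_{pf}(\hat X))=1$. There is essentially no serious obstacle here: the only subtle points are the $\sigma$-compact reduction (ensuring we do not have to quantify over uncountably many $A\in\mathcal B_0(X)$ at once) and the correct splitting at $s=1$, which is dictated by the two separate finiteness conditions \eqref{ftr756} and \eqref{fty6e654} — the small-$s$ part requires the first moment to be finite (handled by Campbell), while the large-$s$ part uses only that the intensity is finite (handled by Poisson finiteness of the count).
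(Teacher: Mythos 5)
Your proof is correct and follows essentially the same route as the paper: split the local mass over $A\times(0,1]$ and $A\times(1,\infty)$, control the first part via the first-moment (Campbell/Mecke) identity using \eqref{ftr756}, control the second via a.s.\ finiteness of the point count using \eqref{fty6e654}, and reduce to countably many compact sets by $\sigma$-compactness of $X$. No substantive differences.
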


\begin{proof}  Let $A\in\mathcal{B}_0(X)$. By condition \eqref{ftr756} and  the Mecke identity (e.g.\ \cite{Mec67}),
$$
\int_{\Gamma(\hat X)}\sum_{(x,s)\in\gamma}\chi _A(x)\chi _{(0,1]}(s)s\,d\pi_m(\gamma)=\int _{A\times(0,1]}s\,dm(x,s)<+\infty.$$
 Hence, 
\begin{equation}\label{ftyr76idr}
\sum_{(x,s)\in\gamma}\chi _A(x)\chi _{(0,1]}(s)s<+\infty \quad \text{for $\pi_m$-a.a.\ }\gamma\in\Gamma_p(X).\end{equation}

By condition  \eqref{fty6e654} and construction of the Poisson measure, 
$$|\gamma\cap (A\times(1,+\infty))|<\infty \quad \text{for $\pi_m$-a.a.\ }\gamma\in\Gamma_p(X).$$
This implies
\begin{equation}\label{trer6e5}
\sum_{(x,s)\in\gamma}\chi_A(x)\chi_{(1,+\infty)}(s)s<+\infty \quad \text{for $\pi_m$-a.a.\ }\gamma\in\Gamma_p(X).\end{equation}
Note that $X$ can be represented as a countable union of compact sets.
Hence, the lemma follows.
\end{proof}

Next, we define on $X$ the set of discrete Radon measures:
$$\mathbb{K}(X):=\left\{\eta=\sum_i s_i\delta_{x_i}\in \mathbb{M}(X)\mid s_i>0, x_i\in X\right\}.$$
Here, $\delta_{x_i}$ is the Dirac measure with mass at $x_i$, the atoms $x_i$ are assumed to be distinct and their total number is at most countable. By convention, the cone $\mathbb K(X)$ contains the null mass $\eta=0$, which is represented by the sum over the empty set of indices $i$. We denote $\tau(\eta):=\{x_i\}$, i.e., the set on which the measure $\eta$ is concentrated. For $\eta\in\mathbb K(X)$ and $x\in\tau(\eta)$, we  denote by $s_x$ the mass of $\eta$ at  point $x$, i.e., $s_x:=\eta(\{x\})$. Thus, each $\eta\in\mathbb K(X)$ can be written in the form $\eta=\sum_{x\in\tau(\eta)}s_x\delta_x$.

Note that the closure of $\mathbb K(X)$ in the vague topology coincides with $\mathbb M(X)$.
As shown in \cite{HKPR}, $\mathbb K(X)\in\mathcal B(\mathbb M(X))$. We denote by $\mathcal B(\mathbb K(X))$ the trace $\sigma$-algebra of $\mathcal B(\mathbb M(X))$ on $\mathbb K(X)$.

 Let us now construct a bijective mapping
 \begin{equation}\label{R1}
 \mathcal R:\Gamma_{pf}(\hat X)\to \mathbb{K}(X)
 \end{equation}
 as follows: For each $\gamma=\{(x_i,s_i)\}\in\Gamma_{pf}(\hat X)$, we set
 \begin{equation}\label{R2}
  \mathcal R\gamma:=\sum_i s_i\delta_{x_i}\in\mathbb K(X).
 \end{equation}
 By \cite[Theorem~6.2]{HKPR}, we have
 
 \begin{equation}\label{hyutw3r4}
 \mathcal B(\mathbb K(X))=\{\mathcal R A\mid A\in\mathcal B(\Gamma_{pf}(\hat X))\}.\end{equation}
 Hence, both $\mathcal R$ and $\mathcal R^{-1}$ are measurable mappings.

  Let $\xi_r$ be the pushforward of $\pi_m$ under $\mathcal{R}:\Gamma_{pf}(\hat{X})\mapsto \mathbb{K}(X)$. 
  If $A_1,\dots,A_n\in\mathcal{B}_0(X)$ are mutually disjoint, then $\gamma(B_1),\dots,\gamma(B_n)$ are independent random variables under $\pi_m$ if $B_1\subset A_1\times\mathbb{R}_+,\dots,B_n\subset A_n\times\mathbb{R}_+$. Therefore, the random variables
 $$\int_{\hat{X}}\chi_{A_1}(x)s\,d\gamma(x,s),\dots,\int_{\hat{X}}\chi_{A_n}(x)s\,d\gamma(x,s)$$ are independent under $\pi_m$. This implies that $\eta(A_1),\dots,\eta(A_n)$ are independent under $\xi _r.$ Thus, $\xi_r $ is a completely random measure.

Trivially, the sum $\xi_d+\xi_a+\xi_r$ is a completely random measure as well. Thus, part (i) Theorem~\ref{thrm 2} is proven.
 \end{proof}
 
 The following result is immediate now.
 
 \begin{corollary}\label{mygfrcor45}
 Let $m$ be a measure on $X\times \mathbb{R}_+$ which satisfies \eqref{tyr75ire} and \eqref{cmpl4.1}. Then there exists a completely random measure $\mu _m$ such that $\mu _m(\mathbb{K}(X))=1$ and which has Fourier transform
 \begin{equation}\label{nwm1}
 \int_{\mathbb{K}(X)}e^{i\langle f , \eta\rangle}d\mu _m(\eta)=\exp\left[
\int _X\int_{\mathbb{R}_+}\left (e^{isf(x)}-1\right) dm(x,s)\right],\quad f\in C_0(X).
 \end{equation}
The measure $m$ will be called the L\'evy measure of the completely random measure $\mu_m$.
 \end{corollary}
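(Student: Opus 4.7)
The plan is to read the conclusion directly off the construction already carried out in the proof of part~(i) of Theorem~\ref{thrm 2}. I take $\xi_d\equiv 0$ and all $a_n\equiv 0$ (so $\xi_a\equiv 0$), keeping only the third component. Conditions \eqref{tyr75ire} and \eqref{cmpl4.1} are exactly what that construction needs: the former yields non-atomicity of $m$ and hence existence of the Poisson point process $\pi_m$ on $\hat X=X\times\mathbb R_+$, while the latter gives $\pi_m(\Gamma_{pf}(\hat X))=1$ via the lemma proved in the excerpt. Defining $\mu_m$ as the pushforward of $\pi_m$ under the measurable bijection $\mathcal R:\Gamma_{pf}(\hat X)\to\mathbb K(X)$ of \eqref{R1}--\eqref{R2} immediately yields $\mu_m(\mathbb K(X))=1$, and the independence argument at the end of the proof of Theorem~\ref{thrm 2}(i) shows that $\mu_m$ is the distribution of a completely random measure.

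What remains is to verify the Fourier transform formula \eqref{nwm1}. Fix $f\in C_0(X)$ with compact support $A$. For $\eta=\mathcal R\gamma=\sum_i s_i\delta_{x_i}$ one has $\langle f,\eta\rangle=\int_{\hat X}sf(x)\,d\gamma(x,s)$, so a change of variables turns the left-hand side of \eqref{nwm1} into
$$\int_{\Gamma_{pf}(\hat X)}\exp\Bigl[i\int_{\hat X}sf(x)\,d\gamma(x,s)\Bigr]d\pi_m(\gamma).$$
I then want to apply the L\'evy--Khintchine identity \eqref{a1} to the test function $g(x,s):=sf(x)$. This is the only real obstacle: $g\notin C_0(\hat X)$, because it is unbounded in $s$, so \eqref{a1} does not apply directly.

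I handle this by truncation. Choose cutoffs $\chi_n\in C_0(\mathbb R_+)$ with $0\le\chi_n\le 1$ and $\chi_n\uparrow 1$ pointwise, and set $g_n(x,s):=sf(x)\chi_n(s)\in C_0(\hat X)$. Apply \eqref{a1} to $g_n$ and pass to the limit on both sides. On the right, the integrand is dominated by $|e^{isf(x)}-1|\chi_A(x)\le\min\{s\|f\|_\infty,2\}\,\chi_A(x)$, which by the two equivalent forms \eqref{ftr756}--\eqref{fty6e654} of \eqref{cmpl4.1} is $m$-integrable over $A\times\mathbb R_+$. On the left, the finite-mass bound $\mathfrak M_A(\gamma)<\infty$ established for $\pi_m$-a.e.\ $\gamma\in\Gamma_{pf}(\hat X)$ supplies a $d\gamma$-integrable dominant $s\|f\|_\infty\chi_A(x)$, so $\langle g_n,\gamma\rangle\to\langle g,\gamma\rangle$ pointwise and the exponentials are uniformly bounded by $1$. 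Two applications of dominated convergence then identify both sides and complete the verification of \eqref{nwm1}.
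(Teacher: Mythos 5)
Your proposal is correct and follows the paper's own route: the paper presents this corollary as an immediate consequence of the construction of $\xi_r$ in the proof of part (i) of Theorem~\ref{thrm 2} (Poisson measure $\pi_m$ on $\hat X$, restriction to $\Gamma_{pf}(\hat X)$, pushforward under $\mathcal R$), which is exactly what you do. The only substance you add is the truncation/dominated-convergence argument extending \eqref{a1} to the unbounded test function $g(x,s)=sf(x)$ --- a detail the paper leaves implicit --- and your handling of it, with the dominant $\min\{s\|f\|_\infty,2\}\chi_A(x)$ on the $m$-side and the a.s.\ finiteness of $\mathfrak M_A(\gamma)$ on the $\gamma$-side, is sound.
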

 
\begin{remark}
It is easy to see that \eqref{nwm1} remains true if $f\in B_0(X)$, i.e., $f:X\to \mathbb{R}$  is a measurable bounded function with compact support. In particular, for any $A\in\mathcal{B}_0(X)$ and $t\in \mathbb{R}$, we may take $f(x)=t\chi_A(x)$.
 Then by \eqref{nwm1}
 $$\int_{\mathbb{K}(X)}e^{it\eta(A)}d\mu _m(\eta)=\exp\left[\int_A\int_{\mathbb{R}_+}\left(e^{ist}-1\right)dm(x,s)\right].$$
 In particular, if $m$ is product measure:
 $$dm(x,s)=d\sigma(x)d\lambda(s),$$
 then
 $$\int_{\mathbb{K}(X)}e^{it\eta(A)}d\mu_m(\eta)=\exp\left[\sigma(A)\int_{\mathbb{R}_+}\left(e^{ist}-1\right)d\lambda(s)\right].$$
 Thus, in this case the distribution of the random variable $\eta(A)$ only depends on $\sigma(A)$. This is why in such a case, one calls $\mu _m$ a \textit{measure-valued L\'{e}vy processes.}
 \end{remark}

The corollary below follows immediately from Theorem~\ref{thrm 2} and its proof.

\begin{corollary} Let $\xi$ be a completely random measure on $X$. Then there exist a deterministic, nonatomic Radon measure $\xi_d$ and completely random measure $\xi'$, taking values a.s.\ in the space $\mathbb K(X)$ of discrete Radon measures on $X$, such that $\xi=\xi_d+\xi'$.
\end{corollary}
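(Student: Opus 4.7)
The plan is to apply part (ii) of Theorem~\ref{thrm 2} directly and bundle the two "discrete" summands into a single completely random measure. Specifically, given a completely random measure $\xi$, Theorem~\ref{thrm 2}(ii) provides independent completely random measures $\xi_d$, $\xi_a$, $\xi_r$ of the form described in part (i) such that $\xi=\xi_d+\xi_a+\xi_r$. I would then define
$$\xi':=\xi_a+\xi_r$$
and verify the two required properties: (a) $\xi'$ is a completely random measure, and (b) $\xi'$ takes values in $\mathbb K(X)$ almost surely.

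For (a), I would fix mutually disjoint $A_1,\dots,A_n\in\mathcal B_0(X)$. Because $\xi_a$ is a CRM, the random variables $\xi_a(A_1),\dots,\xi_a(A_n)$ are independent; the same holds for $\xi_r(A_1),\dots,\xi_r(A_n)$. Since $\xi_a$ and $\xi_r$ are independent of each other (as asserted in part (i) of Theorem~\ref{thrm 2}), the $\mathbb R^2$-valued random vectors $(\xi_a(A_j),\xi_r(A_j))$, $j=1,\dots,n$, are jointly independent. Taking component sums preserves independence, so $\xi'(A_1),\dots,\xi'(A_n)$ are independent.

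For (b), the measure $\xi_a=\sum_{n\ge 1}a_n\delta_{x_n}$ lies in $\mathbb K(X)$ almost surely by assumption \eqref{tyr7} of Theorem~\ref{thrm 2}. The measure $\xi_r$ is the pushforward of $\pi_m$ under the bijection $\mathcal R:\Gamma_{pf}(\hat X)\to\mathbb K(X)$ constructed in \eqref{R1}--\eqref{R2}, so $\xi_r\in\mathbb K(X)$ almost surely as well. The sum of two elements of $\mathbb K(X)$ is again an element of $\mathbb K(X)$ (after merging any coinciding atoms, the result is still a discrete Radon measure, since finiteness on compact sets is preserved under addition), so $\xi'\in\mathbb K(X)$ almost surely.

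There is no real obstacle here; this statement is just a repackaging of Theorem~\ref{thrm 2}(ii). The only mild point worth spelling out is the independence of the vectors $(\xi_a(A_j),\xi_r(A_j))$, which combines the independence of $\xi_a$ from $\xi_r$ with the CRM property of each; the rest is immediate.
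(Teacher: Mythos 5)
Your proposal is correct and is essentially the argument the paper has in mind: the paper states only that the corollary ``follows immediately from Theorem~\ref{thrm 2} and its proof,'' and setting $\xi':=\xi_a+\xi_r$ and checking the complete-randomness and the a.s.\ membership in $\mathbb K(X)$ is exactly that implicit argument. Your extra care about the joint independence of the pairs $(\xi_a(A_j),\xi_r(A_j))$ and about sums of discrete Radon measures remaining in $\mathbb K(X)$ is sound and fills in the details the paper leaves unstated.
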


A completely random measure on $X$ which takes a.s.\ values in  $\mathbb K(X)$ is called a {\it completely random discrete measure}. In particular, the measure $\xi_r$ from Theorem~\ref{thrm 2} is a {\it completely random discrete measure without fixed atoms}. Below we will only be interested in such completely random measures.

 \section{Quasi-invariance 
of completely random measures with respect to transformations of weights}\label{cte65i}

In this section, we will consider the current group which transforms the weights.  Let $\sigma$ be a fixed Radon non-atomic measure on $(X,\mathcal B(X))$.
 
  \subsection{General theory}
  
  We define 	
 \begin{multline*}
  C_0(X\to\mathbb{R}_+):=\{\theta:X\to \mathbb R_+\mid \theta \text{ is continuous and }\\
   \theta=1 \text{ outside a compact set in } X\}.\end{multline*}
 $C_0(X\to\mathbb{R}_+)$ is a (commutative) group under the usual point-wise multiplication of functions. In particular, the identity element in this group is the function which is identically equal to $1$ on $X$.
We call $C_0(X\to\mathbb{R}_+)$ a {\it current group}.

 We define the action of the group $C_0(X\to \mathbb{R}_+)$ on $\mathbb{M}(X)$ (the set of Radon measures) by
 $$\mathbb{M}(X)\ni \eta\mapsto\theta \eta\in \mathbb{M}(X) \text{ for each }\theta\in C_0(X\to\mathbb{R}_+).$$
 Here $\theta\eta$ denotes the measure on $X$ which has density $\theta$ with respect to the measure $\eta$.

  Assume $\mu_m$ is a completely random measure on $X$ which has Fourier transform \eqref{nwm1}. We are interested whether $\mu_m$ is quasi-invariant with respect to the action of the group $C_0(X\to\mathbb{R}_+)$ on $\mathbb{M}(X)$.
    
  Let  us assume that
 \begin{equation}\label{Q1}
 dm(x,s)=\frac{l(x,s)}{s}\,d\sigma(x)\,ds,
 \end{equation}
 where
 \begin{equation}\label{Q2}
\text{for each $x\in X$, either $l(x,s)>0$ for all $s\in\mathbb R_+$ or $l(x,s)=0$ for all $s\in\mathbb R_+$.} 
 \end{equation}

Below, for a set $Y\in\mathcal B(X)$, we denote by $\mathcal B(Y)$ the  trace $\sigma$-algebra of $\mathcal B(X)$ on $Y$, i.e., the collection of all $A\in\mathcal B(X)$ satisfying $A\subset Y$. We will also denote by $\mathcal B_0(Y)$ the collection of all $A\in\mathcal B(X)$ which satisfy $A\subset Y$.

Let
\begin{equation}\label{rtew64u3u}
Y:=\{x\in X\mid l(x,\cdot)>0\}.\end{equation}
Then, under \eqref{Q1} and \eqref{Q2},   condition \eqref{cmpl4.1} becomes
  \begin{equation}\label{Q3}
 \int_{A\times \mathbb{R}_+}l(x,s)\min\{s^{-1},1\}\,d\sigma(x)\,ds<+\infty\quad \text{for all }A\in \mathcal{B}_0(Y).
 \end{equation}
 Note also that that condition \eqref{tyr75ire} is now satisfied.

 The following theorem and Corollary~\ref{cor2} below are the main result of this section. They extend  Theorem 4 and Corollary 5 in \cite{ba1}, proved for measure-valued L\'evy processes.

 \begin{theorem}\label{hout8r6ode}
 Assume \eqref{Q1}, \eqref{Q2} and \eqref{Q3} hold. 
  Assume that, for each $n\in\mathbb{N}$, there exists $\epsilon > 0$ such that, for each $A\in \mathcal{B}_0(Y)$, 
 \begin{equation}\label{Q8}
   \int_A\left[\sup_{r\in\left[\frac 1 n,n\right]}\int_{(0,\epsilon)}\frac{|l(x,rs)-l(x,s)|}{s}\,ds\right]d\sigma(x)<\infty.
   \end{equation}
 Then the measure $\mu_m$ is quasi-invariant with respect to all transformations from the group of currents, $C_0(X\to\mathbb R_+)$, i.e.,  each $\theta\in C_0(X\to\mathbb{R}_+)$ maps $\mathbb{K}(X)$ into itself, and $\mu^\theta _m$ is equivalent  to $\mu _m$. Furthermore, the corresponding density is given by
 \begin{multline}\label{Q5}
 \frac{d\mu_m^\theta}{d\mu _m}(\eta)=\exp\bigg[\int _{Y}\log\left(\frac{l(x,\theta^{-1}(x)s_x)}{l(x,s_x)}\right)s_x^{-1}d\eta(x)
 \\
 \text{}+\int_Y\int_{\mathbb{R}_+}\frac {\left(l(x,s)-l(x,\theta^{-1}(x)s)\right)} s \,ds\,d\sigma(x)\bigg].
 \end{multline}
     In \eqref{Q5}, the function appearing under the sign of integral with respect to measure $\eta$ belongs to $L^1(Y,\eta)$ for $\mu _m$-a.a. $\eta\in\mathbb{K}(X)$.
   \end{theorem}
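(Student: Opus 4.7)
The plan is to lift the action $\eta\mapsto\theta\eta$ on $\mathbb{K}(X)$ to a transformation on $\Gamma_{pf}(\hat X)$ via the bijection $\mathcal{R}$, recognise the resulting pushforward measure as another Poisson measure with explicit intensity, and then invoke Takahashi's density formula, Theorem \ref{thrma}. Concretely, if $\eta=\sum_i s_i\delta_{x_i}=\mathcal R(\gamma)$ with $\gamma=\{(x_i,s_i)\}$, then $\theta\eta=\sum_i\theta(x_i)s_i\delta_{x_i}=\mathcal R(\Theta\gamma)$, where $\Theta:\hat X\to\hat X$ is the homeomorphism $\Theta(x,s):=(x,\theta(x)s)$. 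Hence, realising $\mu_m$ as $\mathcal R_*\pi_m$, the pushforward $\mu_m^\theta$ equals $\mathcal R_*\pi_{\Theta_*m}$.

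A routine change of variables $u=\theta(x)s$ in $\int F\,d(\Theta_*m)=\int F(x,\theta(x)s)\,\frac{l(x,s)}{s}\,d\sigma(x)\,ds$ yields
$$d(\Theta_*m)(x,u)=\frac{l(x,\theta^{-1}(x)u)}{u}\,d\sigma(x)\,du.$$
By \eqref{Q2}, on $Y$ this measure and $m$ are mutually absolutely continuous with Radon--Nikodym density $\phi(x,s)=l(x,\theta^{-1}(x)s)/l(x,s)$, while both vanish on $(X\setminus Y)\times\mathbb R_+$.

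The main technical step is to verify the hypothesis $\phi-1\in L^1(\hat X,m)$ of Theorem \ref{thrma}; this is where \eqref{Q8} and the compact support of $\theta-1$ must be orchestrated. Set $A:=\operatorname{supp}(\theta-1)\cap Y\in\mathcal B_0(Y)$; outside $A$ one has $\phi\equiv 1$. Continuity and positivity of $\theta$ yield $n\ge1$ such that $\theta(x),\theta^{-1}(x)\in[1/n,n]$ on $A$. Choose $\epsilon>0$ as afforded by \eqref{Q8} for this $n$, and put $M:=n$. The integrand of $\int|\phi-1|\,dm$ equals $|l(x,\theta^{-1}(x)s)-l(x,s)|/s$, and I would split the $s$-integral into three ranges. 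On $(0,\epsilon)$, bounding $|l(x,\theta^{-1}(x)s)-l(x,s)|/s$ by $\sup_{r\in[1/n,n]}|l(x,rs)-l(x,s)|/s$ and applying \eqref{Q8} handles this range. On $[\epsilon,M]$, the trivial bound $|l(x,\theta^{-1}(x)s)-l(x,s)|/s\le\epsilon^{-1}(l(x,\theta^{-1}(x)s)+l(x,s))$ reduces the task to integrability of $l(x,\cdot)$ against $d\sigma\otimes ds$ over $A\times[\epsilon,M]$ and over $A\times[\epsilon/n,Mn]$; both follow from \eqref{Q3} (where for $s\in[1,Mn]$ one writes $l(x,s)\le(Mn)\cdot l(x,s)s^{-1}$, and for $s\in[\epsilon/n,1]$ one uses $\min(s^{-1},1)=1$). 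On $[M,\infty)$, the substitution $u=\theta^{-1}(x)s$ turns $\int_M^\infty l(x,\theta^{-1}(x)s)s^{-1}ds$ into $\int_{\theta^{-1}(x)M}^\infty l(x,u)u^{-1}du\le\int_1^\infty l(x,u)u^{-1}du$, and the remaining term $\int_M^\infty l(x,s)s^{-1}ds$ is directly controlled by \eqref{Q3}.

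With $\phi-1\in L^1(m)$ established, Theorem \ref{thrma} gives
$$\frac{d\pi_{\Theta_*m}}{d\pi_m}(\gamma)=\exp\!\left[\langle\gamma,\log\phi\rangle+\int_{\hat X}(1-\phi)\,dm\right],$$
with $|\log\phi|\in L^1(\hat X,d\gamma)$ for $\pi_m$-a.a.\ $\gamma$. Pulling back through $\mathcal R$ identifies $\langle\gamma,\log\phi\rangle=\sum_{x\in\tau(\eta)}\log\phi(x,s_x)=\int_Y\log\phi(x,s_x)\,s_x^{-1}\,d\eta(x)$ (using $d\eta=\sum s_x\delta_x$), and a direct computation rewrites $\int(1-\phi)\,dm$ as the second integral in \eqref{Q5}. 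The $L^1$-claim at the end of the theorem is inherited from the $L^1(d\gamma)$ property of $\log\phi$ under the same bijection. The main obstacle I anticipate is the bookkeeping in Step~3, particularly ensuring that the change of variables $u=\theta^{-1}(x)s$ stays within ranges where \eqref{Q3} provides integrability uniformly over $x\in A$; the choice $M=n$ is what makes the large-$s$ estimate work.
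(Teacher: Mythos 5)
Your proposal is correct and follows essentially the same route as the paper's proof: lift the action to $\Gamma_{pf}(\hat X)$ via $\mathcal R$, identify $\mu_m^\theta$ with the pushforward of the Poisson measure whose intensity is the image of $m$ under $(x,s)\mapsto(x,\theta(x)s)$, verify that $\phi-1\in L^1(\hat X,m)$ for $\phi(x,s)=l(x,\theta^{-1}(x)s)/l(x,s)$ using \eqref{Q8} near $s=0$ and \eqref{Q3} away from it, and conclude with Theorem~\ref{thrma}. The only substantive difference is your three-range split $(0,\epsilon)$, $[\epsilon,n]$, $[n,\infty)$, which retains the weight $s^{-1}$ for large $s$ and so draws finiteness directly from \eqref{Q3}; this is actually tighter than the paper's two-range argument, which bounds $s^{-1}\le\epsilon^{-1}$ on all of $[\epsilon,\infty)$ and then invokes \eqref{Q3} for $\int_{A\times[\epsilon,\infty)}l\,d\sigma\,ds$ --- a step that does not literally follow from \eqref{Q3} in the large-$s$ regime and is repaired by your version.
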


   \begin{proof} 
 We divide the proof of this theorem into several steps.
 
 {\it Step 1.}  
   Let us first prove that, for each $\theta\in C_0(X\to\mathbb R_+)$,
   \begin{align}
   &\int_Y\int_{\mathbb{R}_+}\frac {\left|l(x,s)-l(x,\theta^{-1}(x)s)\right|} s \ ds \ d\sigma(x)\notag\\
   &\quad= \int_X\int_{\mathbb{R}_+}\frac {\left|l(x,s)-l(x,\theta^{-1}(x)s)\right|} s \ ds \, d\sigma(x)<\infty.\label{tur678}\end{align}
The function $\theta$ is continuous and takes values in $\mathbb{R}_+$. By the definition of $C_0(X\to \mathbb{R}_+)$, there exists a compact set $C\subset X$ such that $\theta(x)=1$ for al $x\notin C$. The function $\theta$ is continuous on the compact set $C$. Hence $\theta$ attains its infimum and supremum on ${C}$. Thus,
   $$\inf_{x\in C}\theta(x)>0, \qquad \sup_{x\in C}\theta(x)<+\infty.$$
   But this implies that, for all $y\in X$,
   $$0<\inf_{x\in X}\theta(x)\le \theta(y)\le\sup_{x\in  X}\theta(x)<\infty.$$
      Hence, there exists $n\in \mathbb{N} $ such that, for all $x\in X$,
   $$\frac 1 n\le \theta(x)\le n.$$
   So, fix this $n\in \mathbb{N},$ and choose the corresponding $\epsilon > 0$ as in the formulation of the theorem. 
 Denote $A=C\cap Y$, $A\in\mathcal B_0(Y)$.  
   We have
   \begin{align*}
  \int_{A\times\mathbb{R}_+}\frac {\left|l(x,\theta^{-1}(x)s)-l(x,s)\right|}s\,d\sigma(x)\,ds= &\int_{A\times (0,\epsilon)}\frac{|l(x,\theta^{-1}(x)s)-l(x,s)|}{s}\,d\sigma(x)\,ds\\
   +&\int_{A\times [\epsilon, +\infty)}\frac{|l(x,\theta^{-1}(x)s)-l(x,s)|}{s}\,d\sigma(x)\,ds.
   \end{align*}
   To prove the finiteness of the first integral, we have,
    for a fixed $x\in A$,
      \begin{equation*}
  \int_{(0,\epsilon)}\frac{|l(x,\theta^{-1}(x)s)-l(x,s)|}{s}ds \le \sup _{r\in\left[\frac 1 n, n\right]}\int_{(0,\epsilon)}\frac{|l(x,rs)-l(x,s)|}{s}ds.
  \end{equation*}
  Hence, by \eqref{Q8},
  \begin{align*}
  &\int _{A\times(0,\epsilon)} \frac{|l(x,\theta^{-1}(x)s)-l(x,s)|}{s}\,d\sigma(x)\,ds \\
  &\qquad
   \le\int_A\left[\sup_{r\in\left[\frac 1 n, n\right]}\int_{(0,\epsilon)}\frac{|l(x,rs)-l(x,s)|}{s}ds\right]d\sigma(x)<+\infty.
  \end{align*}
  
  For the second integral, we have
   \begin{align}\nonumber
   &\int _{A\times[\epsilon,+\infty)}\frac{|l(x,\theta^{-1}(x)s)-l(x,s)|}{s}\,ds\,d\sigma(x)\\ \nonumber
   &\quad\le\frac 1 \epsilon\int_{A\times[\epsilon, +\infty)}|l(x,\theta^{-1}(x)s)-l(x,s)|
  \, d\sigma(x)\,ds\\
    &\quad\le \frac 1 \epsilon\left[\int_{A\times[\epsilon, +\infty)}l(x,\theta^{-1}(x)s)\,d\sigma(x)\,ds+\int_{A\times[\epsilon, +\infty)}l(x,s)\,d\sigma(x)\,ds\right]. \label{Q10}
     \end{align}
          By \eqref{Q3} the second integral in \eqref{Q10} is finite. Let us consider the first integral
     $$\int_{A\times[\epsilon,+\infty)}l(x,\theta^{-1}(x)s)\, d\sigma(x)\,ds.$$
      Let $G$ denote the image of $A\times[\epsilon,+\infty)$ under the mapping $(x,s)\to(x,\theta^{-1}(x)s)$. Then, as $\frac 1 n \le \theta(x)\le n$, we obtain from \eqref{Q3}:
     \begin{align*}
     &\int_{A\times[\epsilon,+\infty)}l(x,\theta^{-1}(x)s)\,d\sigma(x)\,ds\\
     &\quad =\int_G l(x,s)\theta(x)\ d\sigma(x)\ ds\\
     &\quad\le n\int_G l(x,s) \ d\sigma(x)\ ds\\
     &\quad\le n\int_{A\times\left[\frac \epsilon n, +\infty\right)}l(x,s)ds<\infty.
    \end{align*}
    
    Thus,
    \begin{equation}\label{m1}
    \int_A\int_{\mathbb{R}_+}\frac {|l(x,s)-l(x,\theta^{-1}(x)s)|}s\ ds\ d\sigma(x)<\infty.
    \end{equation}
   If $x\notin A$, then either $\theta(x)=1$ or $l(x,s)=0$ for all $s\in\mathbb R_+$. Hence
   $$l(x,s)-l(x,\theta^{-1}(x)s)=0.$$
   Therefore the integral in \eqref{m1} is equal to
   $$\int _X\int _{\mathbb R_+}\frac {|l(x,s)-l(x,\theta^{-1}(x) s)|}s \ ds \ d\sigma(x).$$
   Thus \eqref{tur678} holds.

{\it Step 2.} We will now bring the problem of equivalence 
of the measures $\mu_m$ and $\mu_m^\theta$ to the configuration space $\Gamma_{pf}(\hat X)$.

  Recall that the measure $\mu_m$ was constructed as the pushforward of the Poisson measure $\pi _m$ under the bijective mapping $\mathcal{R}$, see \eqref{R1} and \eqref{R2}. Consider the inverse mapping $$\mathcal{R}^{-1}:\mathbb{K}(X)\to \Gamma_{pf}(\hat{X}),$$
    with
    $$\mathcal{R}^{-1}\left(\sum_i s_i\delta_{x_i}\right) = \{(x_i, s_i)\}.$$
    As we already know $\mathcal{R}^{-1} $ is measurable. Denote by $\pi _m^\theta$ the pushforward of $\mu_m^\theta$ under $\mathcal{R}^{-1}.$ Note that
      \begin{align}\nonumber
    & \mathcal{R}^{-1}\theta  \mathcal{R}:\Gamma_{pf}(\hat{X})\to \Gamma_{pf}(\hat{X}), \text { and }\\
    & \gamma=\{(x_i,s_i)\}\to\{(x_i,\theta(x_i) s_i)\}.\label{m3}
    \end{align}
    Hence, $\pi_m^\theta$ is the pushforward of the measure $\pi_m$ under the transformation \eqref{m3}. Thus, for each $f\in C_0(X\times\mathbb{R}_+)$ and $\gamma=\{(x_i,s_i)\}\in\Gamma_{pf}(\hat{X}),$
    $$
    \langle f,\mathcal{R}^{-1}\theta\mathcal{R}\gamma\rangle=\sum _i f(x_i,\theta(x_i)s_i)
    =\langle f^\theta,\gamma\rangle,
    $$
    where $f^\theta:X\times\mathbb{R}_+\to\mathbb{R}$ and
      $f^\theta (x,s)=f(x,\theta(x)s).$
        Hence, the Fourier transform of $\pi_m^\theta$ is
      \begin{align*}
      \int_{\Gamma_{pf}(\hat{X})}e^{i\langle f,\gamma\rangle}d\pi_m^\theta(\gamma)=&\int_{\Gamma_{pf}(\hat{X})}e^{\langle f^\theta,\gamma\rangle}d\pi _m(\gamma)\\
      =&\exp\left [\int_X\int_{\mathbb{R}_+}(e^{if(x,\theta(x)s)}-1)\frac{l(x,s)}{s} \ ds \ d\sigma(x) \right]\\ 
      &=\exp\left[\int_X\int_{\mathbb{R}_+}(e^{if(x,s)}-1)\frac{l(x,\theta^{-1}(x)s)}{s}\, {d\sigma(x)} \ {ds}\right].
\end{align*}
Hence, $\pi_m^\theta$ is the Poisson measure on $\Gamma_{pf}(\hat{X})$ with intensity measure
$$dm^\theta(x,s):=\frac{l(x,\theta^{-1}(x)s)}{s} \, d\sigma(x) \ ds.$$

Thus, to prove that the measures $\mu _m$ and $\mu _m^\theta$ are equivalent, it is sufficient to prove that the measures $\pi _m$ and $\pi _m^\theta$ are equivalent. 

{\it Step 3.} By using Theorem \ref{thrma}, we will now show that the measures $\pi _m$ and $\pi _m^\theta$ are equivalent.

By \eqref{Q2}, both measures $m$ and $m^\theta$ are concentrated on $Y\times\mathbb R_+$, are equivalent and 
\begin{equation*}
\frac{dm^\theta}{dm}(x,s)=\frac{l(x,\theta^{-1}(x)s)s}{s \ l(x,s)}\,\chi_Y(x) = \frac {l(x,\theta^{-1}(x)s)}{l(x,s)}\,\chi_Y(x) .
\end{equation*}
We have by \eqref{tur678},
\begin{align*}
&\int _{Y\times\mathbb{R}_+} \left | \frac {l(x,\theta^{-1}(x)s)}{l(x,s)}-1\right |dm(x,s)\\
& \quad =\int_{Y\times\mathbb{R}_+}\frac {|l(x,\theta^{-1}(x)s)-l(x,s)|} s \ d\sigma(x) \ ds < \infty.
\end{align*}
Hence, by Theorem \ref{thrma}, the measures $\pi_m$ and $\pi _m ^\theta$ are equivalent, hence so are $\mu_m$ and $\mu_m^\theta$.

Also by Theorem \ref{thrma}, for $\gamma=\{(x_i,s_i)\}\in \Gamma_{pf}(\hat{X})$,
\begin{align*}
\frac {d\pi _m ^\theta}{d\pi_m}(\gamma)=&\exp\left[\left\langle\log\left(\frac{dm^\theta}{dm}\right )\chi_{\hat Y}, \gamma\right\rangle +\int _{\hat Y}\left(1-\frac {dm^\theta}{dm}\right)dm \right]\\
=&\exp \bigg[\sum _i \log\left(\frac{l(x_i,\theta^{-1}(x_i)s_i)}{l(x_i,s_i)}\right)\chi_{\hat Y}(x_i,s_i)\\
&\qquad+ \int_{\hat{Y}}\left (1- \frac{l(x,\theta^{-1}(x)s)}
{l(x,s)}\right)\frac {l(x,s)}{s}\ d\sigma(x) \ ds\bigg]\\
=& \exp \bigg[\sum _i \log\left(\frac{l(x_i,\theta^{-1}(x_i)s_i)}{l(x_i,s_i)}\right)\frac{s_i}{s_i}\, \chi_{\hat Y}(x_i,s_i)\\
&\qquad + \int_{\hat Y} (l(x,s)-l(x,\theta^{-1}(x)s) \, \frac {1} s \, d\sigma(x) \ ds\bigg],
\end{align*}
where $\hat Y=Y\times\mathbb R_+$.
From here formula \eqref{Q5} follows.
\end{proof}
\begin{corollary}
Assume that  the condition of Theorem \ref{hout8r6ode} hold. For each $\theta\in C_0(X\to \mathbb{R}_+)$, we define a unitary operator $\mathscr {U}_\theta $ in $L^2(\mathbb{K}(X)\to \mathbb{C},\mu _m)$ by $$(\mathscr{U}_\theta f)(\eta)=f(\theta^{-1}\eta)\sqrt{\frac{d\mu^\theta_m}{d\mu_m}(\eta)},$$
where the Radon--Nikodym density $\frac{ d\mu_m
^\theta}{d\mu_m}$ is given by \eqref{Q5}. Then the operators $\mathscr{U}_\theta$, $\theta\in C_0(X\to\mathbb{R}_+)$, form a unitary representation of the current group $C_0(X\to\mathbb{R}_+)$.
\end{corollary}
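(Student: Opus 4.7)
The plan is to verify the three axioms of a unitary representation: each $\mathscr{U}_\theta$ is a well-defined unitary operator on $L^2(\mathbb{K}(X),\mu_m)$, the identity $\mathscr{U}_e=\id$ holds where $e$ denotes the constant function $1$ (the neutral element of the current group), and the multiplicativity $\mathscr{U}_{\theta_1}\mathscr{U}_{\theta_2}=\mathscr{U}_{\theta_1\theta_2}$ holds for all $\theta_1,\theta_2\in C_0(X\to\R_+)$.

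First I would establish isometry. By definition $\mu_m^\theta$ is the pushforward of $\mu_m$ under the measurable bijection $\eta\mapsto\theta\eta$ on $\mathbb{K}(X)$, and by Theorem~\ref{hout8r6ode} it is equivalent to $\mu_m$ with density \eqref{Q5}. A standard change-of-variables argument yields, for any $f\in L^2(\mathbb{K}(X),\mu_m)$,
$$\int_{\mathbb{K}(X)}|f(\theta^{-1}\eta)|^2\,\frac{d\mu_m^\theta}{d\mu_m}(\eta)\,d\mu_m(\eta)=\int_{\mathbb{K}(X)}|f(\eta)|^2\,d\mu_m(\eta),$$
so $\mathscr{U}_\theta$ is isometric. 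Since $\theta^{-1}\in C_0(X\to\R_+)$ as well, Theorem~\ref{hout8r6ode} also supplies $\mathscr{U}_{\theta^{-1}}$. Once the group law is established, it will give $\mathscr{U}_\theta\mathscr{U}_{\theta^{-1}}=\mathscr{U}_e$, and $\mathscr{U}_e=\id$ is immediate from \eqref{Q5} (for $\theta\equiv1$ the integrand and compensator in the exponent vanish identically). Hence $\mathscr{U}_\theta$ will be unitary.

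For the group law, the key input is the cocycle identity
$$\frac{d\mu_m^{\theta_1\theta_2}}{d\mu_m}(\eta)=\frac{d\mu_m^{\theta_1}}{d\mu_m}(\eta)\cdot\frac{d\mu_m^{\theta_2}}{d\mu_m}(\theta_1^{-1}\eta)\quad\text{for $\mu_m$-a.e.\ }\eta.$$
This holds because the current group acts by pointwise multiplication, so $(\theta_1\theta_2)\eta=\theta_1(\theta_2\eta)$ and consequently $\mu_m^{\theta_1\theta_2}=(\mu_m^{\theta_2})^{\theta_1}$; the transformation formula $\frac{d(\mu_m^{\theta_2})^{\theta_1}}{d\mu_m^{\theta_1}}(\eta)=\frac{d\mu_m^{\theta_2}}{d\mu_m}(\theta_1^{-1}\eta)$ combined with the chain rule for Radon--Nikodym densities yields the stated identity. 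Granted this, for $f\in L^2(\mathbb{K}(X),\mu_m)$ one computes
\begin{align*}
(\mathscr{U}_{\theta_1}\mathscr{U}_{\theta_2}f)(\eta)
&=(\mathscr{U}_{\theta_2}f)(\theta_1^{-1}\eta)\sqrt{\tfrac{d\mu_m^{\theta_1}}{d\mu_m}(\eta)}\\
&=f(\theta_2^{-1}\theta_1^{-1}\eta)\sqrt{\tfrac{d\mu_m^{\theta_2}}{d\mu_m}(\theta_1^{-1}\eta)}\sqrt{\tfrac{d\mu_m^{\theta_1}}{d\mu_m}(\eta)}\\
&=f((\theta_1\theta_2)^{-1}\eta)\sqrt{\tfrac{d\mu_m^{\theta_1\theta_2}}{d\mu_m}(\eta)}=(\mathscr{U}_{\theta_1\theta_2}f)(\eta),
\end{align*}
where the equality $\theta_2^{-1}\theta_1^{-1}=(\theta_1\theta_2)^{-1}$ uses commutativity of $C_0(X\to\R_+)$.

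The main obstacle is the cocycle identity itself: although it is conceptually routine, one must carefully track at which point each Radon--Nikodym density is evaluated after a pushforward. The cleanest route is the abstract one above, relying only on uniqueness of Radon--Nikodym densities and the change-of-variables formula. As a sanity check, the identity can also be verified directly from the explicit expression \eqref{Q5} using the telescoping decomposition $\log\bigl(l(x,\theta_1^{-1}(x)\theta_2^{-1}(x)s)/l(x,s)\bigr)=\log\bigl(l(x,\theta_1^{-1}(x)\theta_2^{-1}(x)s)/l(x,\theta_1^{-1}(x)s)\bigr)+\log\bigl(l(x,\theta_1^{-1}(x)s)/l(x,s)\bigr)$, with the analogous splitting for the compensator integral and the change of variable $s\mapsto\theta_1^{-1}(x)s$ in the first summand.
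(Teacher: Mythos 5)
Your proof is correct: the paper states this corollary without proof, treating it as the standard quasi-regular representation construction (sketched in its Introduction), and your verification — isometry via the change-of-variables formula, the cocycle identity for the Radon–Nikodym densities, and unitarity from $\mathscr{U}_\theta\mathscr{U}_{\theta^{-1}}=\mathscr{U}_e=\id$ — is exactly the argument being implicitly invoked. No gaps.
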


\begin{corollary}\label{cor2}
Assume \eqref{Q1}--\eqref{Q3} are satisfied. Assume that, for some $\epsilon>0$,
$$ l(x,s)=l_1(x,s)+l_2(x,s)\quad \text{for $x\in Y$, $s\in (0,\epsilon)$},$$ 
where $Y$ is defined by \eqref{rtew64u3u}. 
 Here, for each  fixed $x\in Y$, the function $l_1(x,s)$ is differentiable in $s$ on $(0,\epsilon)$, and for each $n\in \mathbb N$ and $A\in\mathcal B_0(Y)$, 
\begin{equation}\label{mmu1}
\int_A\int_{(0,\frac \epsilon n)}\sup_{u\in[\frac s n,sn]}\left|\frac \partial{\partial u}l_1(x,u)\right|ds\,d\sigma(x)<\infty
\end{equation}
and
\begin{equation}\label{mmu2}
\int _A\int_{(0,\epsilon)}\frac{l_2(x,s)}{s}\,ds\,d\sigma(x)< \infty.
\end{equation}
Then condition \eqref{Q8} is satisfied, and so the conclusion of Theorem~\ref{hout8r6ode} holds.
\end{corollary}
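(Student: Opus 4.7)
The plan is to reduce verifying condition \eqref{Q8} of Theorem \ref{hout8r6ode} to the two hypotheses \eqref{mmu1} and \eqref{mmu2} by choosing the $\epsilon$ in \eqref{Q8} to be $\epsilon/n$, where $\epsilon$ is the one fixed in the corollary. With this choice, whenever $s\in(0,\epsilon/n)$ and $r\in[1/n,n]$, both $s$ and $rs$ lie in $(0,\epsilon)$, so the decomposition $l=l_1+l_2$ is available at both arguments. First I would apply the triangle inequality
\[
|l(x,rs)-l(x,s)|\le|l_1(x,rs)-l_1(x,s)|+|l_2(x,rs)-l_2(x,s)|
\]
and handle the two pieces separately.

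For the $l_1$-piece I would use the mean value theorem in the variable $s$: since $l_1(x,\cdot)$ is differentiable on $(0,\epsilon)$ and the interval with endpoints $s$ and $rs$ is contained in $[s/n,sn]\subset(0,\epsilon)$, there exists $u$ between $s$ and $rs$ with
\[
|l_1(x,rs)-l_1(x,s)|=|r-1|\,s\,\bigl|\tfrac{\partial}{\partial u}l_1(x,u)\bigr|\le n\,s\sup_{u\in[s/n,sn]}\bigl|\tfrac{\partial}{\partial u}l_1(x,u)\bigr|.
\]
Dividing by $s$ yields a bound independent of $r$, so the supremum over $r\in[1/n,n]$ is absorbed for free, and the resulting integral over $A\times(0,\epsilon/n)$ is finite by \eqref{mmu1}.

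For the $l_2$-piece I would simply bound $|l_2(x,rs)-l_2(x,s)|\le l_2(x,rs)+l_2(x,s)$. The term coming from $l_2(x,s)/s$ is controlled directly by \eqref{mmu2}. For the term $l_2(x,rs)/s$, the substitution $u=rs$ (with $r$ fixed) gives
\[
\int_{(0,\epsilon/n)}\frac{l_2(x,rs)}{s}\,ds=\int_{(0,r\epsilon/n)}\frac{l_2(x,u)}{u}\,du\le\int_{(0,\epsilon)}\frac{l_2(x,u)}{u}\,du,
\]
since $r\le n$. The right-hand side is independent of $r$, so again the supremum over $r$ costs nothing, and integrating over $A$ with respect to $\sigma$ is finite by \eqref{mmu2}. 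Combining the two estimates and invoking Theorem \ref{hout8r6ode} completes the argument.

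There is no real obstacle here; the only point that requires a little care is choosing the truncation radius $\epsilon/n$ so that multiplying by $r\in[1/n,n]$ keeps $rs$ inside the region $(0,\epsilon)$ where the decomposition $l=l_1+l_2$ is valid, and then recognizing that both bounds derived above are uniform in $r$, which lets the supremum over $r\in[1/n,n]$ be taken trivially.
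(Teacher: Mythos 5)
Your proposal is correct and follows essentially the same route as the paper: triangle inequality on the decomposition $l=l_1+l_2$, the mean value theorem with $u_0\in[\frac sn,sn]$ to reduce the $l_1$-piece to \eqref{mmu1}, and the crude bound $|l_2(x,rs)-l_2(x,s)|\le l_2(x,rs)+l_2(x,s)$ together with the substitution $u=rs$ to reduce the $l_2$-piece to \eqref{mmu2}. Your explicit choice of $\epsilon/n$ as the radius in \eqref{Q8}, so that $rs$ stays in $(0,\epsilon)$ where the decomposition holds, is in fact slightly more careful than the paper's write-up on this point.
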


\begin{proof}
Using that $l(x,s)=l_1(x,s)+l_2(x,s)$, we get
\begin{align*}
&\sup_{r\in\left[\frac 1 n,n\right]}\int_{(0,\epsilon)}\frac{|l(x,rs)-l(x,s)|}{s}\,ds\\
&\le\sup_{r\in\left[\frac 1 n,n\right]}\int_{(0,\epsilon)}\frac{|
l_1(x,rs)-l_1(x,s)|}s\,ds+\sup_{r\in\left[\frac 1 n,n\right]}\int_{(0,\epsilon)}\frac{|l_2(x,rs)-l_2(x,s)|}s\,ds.
\end{align*}
Hence, it suffices to prove that \eqref{Q8} holds for both $l(x,s)=l_1(x,s)$ and for $l(x,s)=l_2(x,s)$. By Taylor's formula,
\begin{equation*} 
|l_1(x,rs)-l_1(x,s)|=\left|\frac \partial{\partial u}l_1(x,u)\Big|_{u=u_o}\right|\,|rs-s|,\end{equation*}
where $u_0$ is a point between $rs$ and $s$, that is for $r<1,$ $u_0\in(rs,s)$ and $r>1$, $u_0\in(s,rs)$. Therefore, for $r\in\left[\frac 1 n,n\right]$, we have $u_0\in\left[\frac s n,sn\right]$. Hence, for $r\in\left[\frac 1 n,n\right]$,
\begin{equation*}
|l_1(x,rs)-l_1(x,s)|\le\sup_{u\in\left[\frac s n,sn\right]}\left|\frac\partial{\partial u}l_1(x,u)\right|ns.\end{equation*}
This implies, by \eqref{mmu1},
\begin{align*}
\int_A\Bigg[\sup_{r\in\left[\frac 1n,n\right]}&\int_{(0,\epsilon)}\frac{|l_1(x,rs)-l_1(s,s)|}s\,ds\Bigg]d\sigma(x)\\
\le \int _A &\left[\sup_{r\in\left[\frac 1 n,n\right]}\int_{(0,\epsilon)}\sup_{u\in\left[\frac sn, n\right]}\left|\frac \partial {\partial u}l_1(x,u)\right|n\,ds\right]d\sigma(x)\\
&=n\int_A\int
_{(0,\epsilon)}\sup_{u\in \left[\frac sn,sn\right]}\left|\frac{\partial}{\partial u}l_1(x,u)\right|ds\,d\sigma(x)<\infty,
\end{align*}
where $A\in\mathcal{B}_0(Y)$. Thus, the statement is proven for $l_1$.

Now, let us prove the statement for $l_2$. For $r\in\left[\frac 1 n,n\right]$, and $A\in\mathcal{B}_0(Y)$,
\begin{align*}
&\int_A\Bigg[\sup_{r\in\left[\frac 1 n,n\right]}\int_{(0,\epsilon/n)}\frac{|l_2(x,rs)-l_2(x,s)|}s\,ds\Bigg]d\sigma(x)\\
&\quad\le\int_A\Bigg[\sup_{r\in\left[\frac 1 n,n\right]}\int_{(0,\epsilon/n)}\frac{l_2(x,rs)}s\,ds\Bigg]d\sigma(x)+\int_A\left[\sup_{r\in\left[\frac 1 n,n\right]}\int_{(0,\epsilon/n)}\frac{l_2(x,s)}s\,ds\right]d\sigma(x)\\
&\quad=\int_A\left[\sup_{r\in\left[\frac 1 n,n\right]}\int_{(0,\epsilon/n)}\frac {l_2(x,s)}s\,ds\right]d\sigma(x)+\int_A\int_{(0,\epsilon/n)}\frac{l_2(x,s)}s\,ds\,d\sigma(x)\\
&\quad \le 2\int_A\int_{(0,\epsilon)}\frac{l_2(x,s)}s\,ds\,d\sigma(x)<\infty
\end{align*}
by \eqref{mmu2}.
\end{proof}

\subsection{Examples}

We will now consider examples of completely random measures which satisfy the assumptions of Corollary \ref{cor2}.

\subsubsection{Completely random gamma measures}\label{tyd576e}
Let us fix two parameters $\alpha>0$ and $\beta>0$. We first consider the function
$$l(x,s)=l(s)=\beta e^{-\frac s\alpha },$$
so that
\begin{equation}\label{gur76r5}
dm(x,s)=\beta\frac{e^{-\frac s\alpha }}s\,d\sigma(x)\,ds.\end{equation}
Note that
$$dm(x,s)=d\sigma(x)\,d\lambda(s),$$
where
$$d\lambda(s)=\beta\frac{e^{-\frac s\alpha }}s\,ds.$$
Following \cite{bibb3}, we will call the measure $\mu _m$ the {\it gamma measure}, or the {\it measure-valued gamma process with parameters $\alpha$ and $\beta$}.

\begin{proposition}\label{gtyfr67}
	The Laplace transform of the measure $\mu_m$ with $m$ given by \eqref {gur76r5} is
	\begin{equation}\label{lurify1}
\int_{\mathbb K(X)}\exp[-\langle\eta,f\rangle]d\mu_m(\eta)=\exp\left[-\beta\int _X\log(1+\alpha f(x))\, d\sigma(x)\right],\end{equation}
where $f:X\to\R $ is a bounded measurable function with compact support which satisfies  $f(x)> -\frac 1\alpha$ for all $x\in X$.
\end{proposition}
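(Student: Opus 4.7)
The plan is to reduce the Laplace transform of $\mu_m$ to the Laplace functional of the underlying Poisson point process $\pi_m$, and then to evaluate the resulting deterministic integral via Frullani's formula. First I would extend the Fourier-transform identity \eqref{nwm1} to the corresponding Laplace-transform identity: for any bounded measurable $f:X\to\R$ with compact support such that $f(x)>-1/\alpha$ for all $x$, I claim that
\begin{equation*}
\int_{\mathbb K(X)}e^{-\langle\eta,f\rangle}\,d\mu_m(\eta)=\exp\left[\int_X\int_0^\infty\bigl(e^{-sf(x)}-1\bigr)\,\beta\,\frac{e^{-s/\alpha}}{s}\,ds\,d\sigma(x)\right].
\end{equation*}
Since $\mu_m$ is the pushforward of $\pi_m$ under the map $\mathcal R$ of \eqref{R1}--\eqref{R2}, one has $\langle\mathcal R\gamma,f\rangle=\langle\gamma,g\rangle$ with $g(x,s):=sf(x)$, so this is just the standard Laplace functional formula for the Poisson process $\pi_m$ applied to the (real, possibly signed) function $g$. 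To justify it in the signed case, I would use either a direct Campbell/monotone-convergence argument based on the finite-volume construction of $\pi_m$, or analytic continuation of $z\mapsto\int_{\mathbb K(X)}e^{z\langle\eta,f\rangle}\,d\mu_m(\eta)$ into a complex strip containing $z=-1$. Both are available provided the right-hand side above is absolutely convergent, which follows from the bound $|e^{-sf(x)}-1|\le C\,s$ for small $s$ together with the exponential decay of $e^{-s(f(x)+1/\alpha)}/s$ for large $s$; the latter uses that $f(x)+1/\alpha$ is bounded below by a strictly positive constant on the compact support of $f$.

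Second, I would compute the inner $s$-integral by Frullani's formula. Rewriting the integrand as $\bigl(e^{-s(f(x)+1/\alpha)}-e^{-s/\alpha}\bigr)/s$ and applying $\int_0^\infty(e^{-as}-e^{-bs})/s\,ds=\log(b/a)$ for $a,b>0$ with $a=f(x)+1/\alpha$ and $b=1/\alpha$, I obtain
\begin{equation*}
\int_0^\infty\bigl(e^{-sf(x)}-1\bigr)\,\frac{e^{-s/\alpha}}{s}\,ds=\log\frac{1/\alpha}{f(x)+1/\alpha}=-\log\bigl(1+\alpha f(x)\bigr).
\end{equation*}
Multiplying by $\beta$ and integrating against $d\sigma(x)$ over $X$ then yields exactly the right-hand side of \eqref{lurify1}.

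The main obstacle I expect is the first step. The identity \eqref{nwm1} is stated only for real-valued $f\in C_0(X)$, where the integrand $e^{isf(x)}-1$ is automatically bounded, whereas the substitution to a real negative exponent produces a Poisson integral that can diverge unless $f$ is bounded below by $-1/\alpha$. The hypothesis $f(x)>-1/\alpha$ in the statement is precisely the condition required for convergence of the right-hand side; once it is in force, the extension from \eqref{nwm1} is routine (either analytically or by Campbell's theorem), and Frullani's formula finishes the proof.
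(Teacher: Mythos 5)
Your proposal is correct and follows essentially the same route as the paper: the paper likewise passes to the Laplace functional of the Poisson process $\pi_m$ (its display \eqref{newm11}) and evaluates the inner $s$-integral via the identity $\int_0^\infty (e^{-us}-1)s^{-1}e^{-s}\,ds=-\log(1+u)$ of Lemma~\ref{kinglm1}, which after rescaling is exactly your Frullani computation. You are somewhat more explicit than the paper about justifying the passage from the Fourier-transform formula \eqref{nwm1} to the signed Laplace functional, a step the paper takes for granted.
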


This result is known, see \cite{bibb3}, but we will now give  a complete proof of it, since we will later on need it. 

\begin{proof}
	We start with the following known result. 
	
	\begin{lemma}\label{kinglm1}
For $u>-1$,
\begin{equation}\label{yuuumt6}
\int_0^\infty \frac{e^{-us}-1}{s}\,e^{-s}\,ds=-\log(1+u).
\end{equation}	
	\end{lemma}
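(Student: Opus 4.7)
The plan is to establish the identity by differentiation under the integral sign, treating both sides as functions of $u$ on $(-1,\infty)$ and checking they have the same derivative and agree at a single point.

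First, I would verify that, for $u>-1$, the integrand
$$g(u,s):=\frac{e^{-us}-1}{s}\,e^{-s}=\frac{e^{-(u+1)s}-e^{-s}}{s}$$
is genuinely integrable on $(0,\infty)$: as $s\downarrow 0$, $\frac{e^{-us}-1}{s}\to -u$, so the integrand is bounded near $0$; as $s\to\infty$, the factor $e^{-(u+1)s}$ decays exponentially (using $u+1>0$) and $e^{-s}$ is itself integrable. Hence $F(u):=\int_0^\infty g(u,s)\,ds$ is well defined on $(-1,\infty)$.

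Next I would differentiate under the integral sign. Formally, $\partial_u g(u,s)=-e^{-(u+1)s}$, and on any compact subinterval $[a,b]\subset(-1,\infty)$ this partial derivative is dominated by the integrable function $e^{-(a+1)s}$. So dominated convergence justifies
$$F'(u)=-\int_0^\infty e^{-(u+1)s}\,ds=-\frac{1}{u+1}.$$
Together with the obvious $F(0)=0$, integration of $F'$ from $0$ to $u$ yields $F(u)=-\log(1+u)$, which is exactly \eqref{yuuumt6}.

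The only point demanding any care is the dominated-convergence step justifying the differentiation; once that is in place, the rest is immediate. (An alternative route, which I would mention in passing if needed, is the classical Frullani formula applied to $f(s)=e^{-s}$ with parameters $u+1$ and $1$, giving $\int_0^\infty \frac{e^{-(u+1)s}-e^{-s}}{s}\,ds=\log\frac{1}{u+1}$ directly; but the differentiation approach is self-contained and of the right length for a lemma used once.)
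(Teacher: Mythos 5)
Your proof is correct: the domination $|\partial_u g(u,s)|=e^{-(u+1)s}\le e^{-(a+1)s}$ on compact subintervals $[a,b]\subset(-1,\infty)$ legitimately justifies differentiating under the integral sign, the computation $F'(u)=-\tfrac{1}{u+1}$ and the normalization $F(0)=0$ are both right, and integrating back gives \eqref{yuuumt6}. Note that the paper itself offers no proof of this lemma --- it is invoked as ``the following known result'' inside the proof of Proposition \ref{gtyfr67} --- so there is no argument to compare yours against; your self-contained derivation (or equally the Frullani-integral route you mention) is exactly the kind of standard argument the authors are implicitly relying on.
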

	
	
	By Lemma~\ref{kinglm1}, for $u>\frac 1\alpha$,
		\begin{align}
	&\int_0^\infty \frac{e^{-us}-1}{s}\,e^{-\frac s\alpha }\,ds	=\int_0^\infty\frac{e^{-s \alpha u}-1}{s}\,e^{-s}\,ds\notag\\
	&\quad =-\log\left(1+\alpha u\right).\label{buytr687}
	\end{align}

	Using the construction of the measure $\mu_m$ and the Laplace transform of the Poisson measure,  we have
	\begin{equation}
	\label{newm11}
	\int_{\mathbb K(X)}\exp\left[-\langle \eta,f\rangle\right]d\mu_m(\eta)=\exp\left[\int_X\int_{\mathbb{R}_+}(e^{-f(x)s}-1)e^{-\frac s\alpha }\,\frac \beta s \,ds\, d\sigma(x)\right].
	\end{equation}
		By \eqref{buytr687}, for each $x\in X$,
	\begin{equation*}
	\int_{\mathbb R_+}\left(e^{-f(x)s}-1\right)\,e^{-\frac s\alpha }\,\frac 1 s \,ds=-\log\left(1+\alpha f(x)\right).
	\end{equation*}
		Now, substituting the above result into the right hand side of equation  \eqref{newm11}, we get
	 \eqref{lurify1}.
		\end{proof}

Let $\Delta\in\mathcal{B}_0(X)$. 	By \eqref{lurify1}, for each  $t>-\frac1\alpha$,
\begin{align}
\int_{\mathbb K(X)} \exp[-t\eta(\Delta)]&=\exp\left[-\beta\int_X
\log(1+\alpha t\chi_\Delta(x))\,d\sigma(x)\right]\notag\\
&=(1+\alpha t)^{-\beta\vol(\Delta)}.\label{tyd67i5ei}
\end{align}

Let us recall that the gamma distribution on $\mathbb{R}$ with parameters $\alpha$ and $\theta$ is defined by
$$\frac{u^{\theta-1}}{\alpha^\theta \Gamma(\theta)}\,e^{-\frac{u}{\alpha}}\,\chi_{(0,\infty)}(u)\,du.
$$
The Laplace transform of the gamma distribution is given by
$$\int_{\R}e^{-tu}\,\frac{u^{\theta-1}}{\alpha^\theta \Gamma(\theta)}\,e^{-\frac{u}{\alpha}}\,\chi_{(0,\infty)}(u)\,du =(1+\alpha t)^{\theta},\quad t>-\frac1\alpha.$$
Hence,  under $\mu_m$, the random variable $\eta(\Delta)$ has gamma distribution with parameters $\alpha$ and  $\theta=\beta\,\vol(\Delta)$.

Now, we will produce a generalization by making the parameters $\alpha$ and $\beta$ to be positive functions on $X$. Thus, let us consider measurable functions
$$\alpha:X\to \mathbb{R}_+, \quad \beta: X\to[0,\infty).$$
We define
$$l(x,s)=\beta(x)\,e^{-\frac s{\alpha(x)}},$$
so that
\begin{equation}\label{yurt}
dm(x,s)=\frac{\beta(x)e^{-\frac s{\alpha(x)}}}s\,d\sigma(x)\,ds.
\end{equation}
We denote by $L^1_{\mathrm{loc}}(X,\sigma)$ the space of all measurable functions $f:X\to\mathbb{R}$ such that, for each $A\in\mathcal{B}_0(X)$, $f\chi_A\in L^1(X,\sigma)$,
i.e., $\int_A|f(x)|\,d\sigma(x)<\infty$.

\begin{lemma}\label{lmkuy}
Assume that the function $\alpha \beta $ belongs to $L^1_{\mathrm{loc}}(X,\sigma)$. Then the measure $m$ given by \eqref{yurt} satisfies  \eqref{Q3}.
\end{lemma}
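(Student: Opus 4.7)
My plan is to verify condition \eqref{Q3} directly by using the equivalent form noted in the remark after Theorem~\ref{thrm 2}, namely that \eqref{Q3} (which is \eqref{cmpl4.1} rewritten in terms of $l$) is equivalent to the two conditions \eqref{ftr756} and \eqref{fty6e654}. With $dm(x,s)=\frac{\beta(x)e^{-s/\alpha(x)}}{s}\,d\sigma(x)\,ds$, these become
\begin{equation*}
\int_{A\times(0,1]}\beta(x)e^{-s/\alpha(x)}\,d\sigma(x)\,ds<\infty
\quad\text{and}\quad
\int_{A\times[1,\infty)}\frac{\beta(x)e^{-s/\alpha(x)}}{s}\,d\sigma(x)\,ds<\infty
\end{equation*}
for every $A\in\mathcal B_0(X)$ (with the obvious convention that the integrand vanishes when $\beta(x)=0$). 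The idea for both is to first perform the $s$-integration, which will produce a factor of $\alpha(x)$, so that the remaining $x$-integral is controlled by the hypothesis $\alpha\beta\in L^1_{\mathrm{loc}}(X,\sigma)$.

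For the first integral, I would use Fubini and compute the inner integral explicitly:
\begin{equation*}
\int_0^1 e^{-s/\alpha(x)}\,ds=\alpha(x)\bigl(1-e^{-1/\alpha(x)}\bigr)\le\alpha(x).
\end{equation*}
Substituting gives the bound $\int_A\beta(x)\alpha(x)\,d\sigma(x)$, which is finite by the hypothesis $\alpha\beta\in L^1_{\mathrm{loc}}(X,\sigma)$ and the fact that $A\in\mathcal B_0(X)$. For the second integral, I would drop the $s^{-1}$ factor using $s^{-1}\le 1$ on $[1,\infty)$ and then compute
\begin{equation*}
\int_1^\infty e^{-s/\alpha(x)}\,ds=\alpha(x)e^{-1/\alpha(x)}\le\alpha(x),
\end{equation*}
so that this integral is also bounded by $\int_A\alpha(x)\beta(x)\,d\sigma(x)<\infty$.

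There is no real obstacle here; the lemma is essentially a bookkeeping check. The only subtle point worth noting is that the assumption concerns the product $\alpha\beta$ rather than $\beta$ alone, which is exactly enough because, at both endpoints of the $s$-range, integrating the exponential $e^{-s/\alpha(x)}$ produces a compensating factor of $\alpha(x)$. One should also observe that condition \eqref{Q2} holds automatically: the set $Y$ from \eqref{rtew64u3u} is simply $\{x\in X\mid\beta(x)>0\}$, and on its complement $l(x,s)\equiv 0$, so restricting to $A\in\mathcal B_0(Y)$ (as required by \eqref{Q3}) is equivalent to the computation above with $A\in\mathcal B_0(X)$.
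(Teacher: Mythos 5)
Your proof is correct and rests on the same key observation as the paper's: after bounding $\min\{s^{-1},1\}$ (respectively $s^{-1}$ on $[1,\infty)$) by $1$, the $s$-integral of $e^{-s/\alpha(x)}$ produces the compensating factor $\alpha(x)$, so everything is controlled by $\alpha\beta\in L^1_{\mathrm{loc}}(X,\sigma)$. The paper simply does this in one step over all of $\mathbb R_+$ rather than splitting at $s=1$, so the two arguments are essentially identical.
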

\begin{proof}
For each $A\in\mathcal B_0(X)$, we have
\begin{align*}
&\int_{A\times\R_+}l(x,s)\min\{s^{-1},1\}\,d\sigma(x)\,ds\le
\int_{A\times\R_+}l(x,s)\,d\sigma(x)\,ds\\
&\quad=\int_A \int_0^\infty\beta(x)e^{-\frac s{\alpha(x)}}\,ds\,d\sigma(x)\\
&\quad=\int_A\beta(x)\left(\int_0^\infty e^{-\frac s{\alpha(x)}}\,ds\right)\,d\sigma(x)\\
&\quad=\int_A \alpha(x)\beta(x)\,d\sigma(x)<\infty.\qedhere
\end{align*}
\end{proof}

\begin{proposition}
	The Laplace transform of the measure $\mu_m$ with $m$ given by \eqref{yurt} is
	\begin{equation*}
	\int_{\mathbb{K}(X)}\exp[-\langle\eta,f\rangle]d\mu_m(\eta)=\exp\left[-\int_X(1+\alpha(x)f(x))\beta(x)d\sigma(x)\right],
	\end{equation*}
	where $f:X\to\R$ is a bounded, measurable function with compact support which satisfies $\alpha(x)f(x)>-1$ for all $x\in X$.

\end{proposition}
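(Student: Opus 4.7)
The plan is to mirror the proof of Proposition~\ref{gtyfr67} verbatim, with $\alpha$ and $\beta$ now functions of $x$ rather than constants. In the constant-parameter proof, Lemma~\ref{kinglm1} is applied for each fixed $x$ after a change of variables, so the computation carries over pointwise once the appropriate Fubini/convergence is justified.

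First, I would extend the Fourier-transform formula \eqref{nwm1} to a Laplace-transform identity: using the Poisson construction of $\mu_m$ via $\pi_m$ on $\hat X$, one shows that for a bounded measurable $f:X\to\R$ with compact support satisfying $\alpha(x)f(x)>-1$ on $X$,
\begin{equation*}
\int_{\mathbb K(X)}\exp[-\langle\eta,f\rangle]\,d\mu_m(\eta)
=\exp\!\left[\int_X\int_{\R_+}(e^{-f(x)s}-1)\,\frac{\beta(x)e^{-s/\alpha(x)}}{s}\,ds\,d\sigma(x)\right].
\end{equation*}
This is the Campbell/Laplace functional identity for the Poisson point process $\pi_m$ applied to the test function $g(x,s)=sf(x)$; its validity requires only that the right-hand integral converge absolutely. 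The bound $|e^{-sf(x)}-1|\le \|f\|_\infty s$ for $s$ small handles the small-$s$ tail via Lemma~\ref{lmkuy}, and for $s$ large the condition $\alpha(x)f(x)>-1$ ensures $f(x)+\alpha(x)^{-1}>0$ uniformly on the compact support of $f$, so that $e^{-sf(x)-s/\alpha(x)}$ decays exponentially and is integrable.

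Next, for each fixed $x\in X$, the substitution $s=\alpha(x)u$ in the inner integral transforms it into
\[
\beta(x)\int_0^\infty \bigl(e^{-\alpha(x)f(x)u}-1\bigr)\,e^{-u}\,\frac{du}{u},
\]
and Lemma~\ref{kinglm1} with parameter $\alpha(x)f(x)>-1$ evaluates this to $-\beta(x)\log\bigl(1+\alpha(x)f(x)\bigr)$. An application of Fubini's theorem---justified by the absolute convergence above together with Lemma~\ref{lmkuy}---yields the identity, which is the natural analogue of \eqref{lurify1} with $\alpha,\beta$ promoted to functions on $X$. (The $\log$ which appears here should evidently stand in front of $(1+\alpha(x)f(x))$ in the displayed right-hand side of the proposition; without it, the right-hand side does not evaluate to $1$ at $f\equiv 0$.)

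The main obstacle is the Fubini/integrability step: one must verify that the compact support of $f$ together with $\alpha\beta\in L^1_{\mathrm{loc}}(X,\sigma)$ (from Lemma~\ref{lmkuy}) and the pointwise bound $\alpha f>-1$ suffice to make the full double integral absolutely convergent, so that the extension of \eqref{nwm1} from Fourier to Laplace transforms is legitimate and Fubini applies. Once this is settled, the rest is a direct pointwise-in-$x$ invocation of Lemma~\ref{kinglm1} exactly as in the proof of Proposition~\ref{gtyfr67}.
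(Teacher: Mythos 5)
Your proof is correct and follows essentially the same route as the paper's: extend \eqref{nwm1} to a Laplace functional, reduce the inner integral by the substitution $s=\alpha(x)u$, and apply Lemma~\ref{kinglm1} pointwise in $x$ (the paper does this via \eqref{buytr687}), with the paper simply omitting the integrability justification you supply. You are also right that the displayed right-hand side of the proposition is missing a $\log$ in front of $(1+\alpha(x)f(x))$: the computation yields $-\beta(x)\log(1+\alpha(x)f(x))$ for the inner integral, consistent with the constant-parameter formula \eqref{lurify1}.
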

\begin{proof}
	Analogously to \eqref{newm11}, we have
	\begin{equation*}
	\int_{K(X)}\exp[-\langle\eta,f\rangle]d\mu_m(\eta)=\exp\left[\int_X\int_{\mathbb{R}_+}\left(e^{-f(x)s}-1\right)e^{-\frac s{\alpha(x)}}\,\frac{\beta(x)}{s}\,ds\,d\sigma(x)\right].
	\end{equation*}
	By \eqref{buytr687},
	$$
	\int_{\mathbb{R}_+}\left(e^{-f(x)s}-1\right)e^{-\frac s{\alpha(x)}}\,\frac{\beta(x)}{s}\,ds=-\beta(x)\log(1+\alpha(x)f(x)),
	$$
	which implies the proposition.
\end{proof}

\begin{lemma}\label{tudr66dr}
Assume that the functions $\alpha \beta $ and $\beta$ belong to $L^1_{\mathrm{loc}}(X)$. Then  the measure $m$ satisfies the conditions of Corollary \ref{cor2}.\end{lemma}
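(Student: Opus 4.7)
The plan is to exhibit the simplest possible decomposition $l_1(x,s) = l(x,s) = \beta(x) e^{-s/\alpha(x)}$ and $l_2(x,s) \equiv 0$, and verify that Corollary \ref{cor2} applies with this choice. Conditions \eqref{Q1} and \eqref{Q2} are immediate from the form \eqref{yurt}: for each fixed $x$, $l(x,\cdot)$ is strictly positive if $\beta(x)>0$ and vanishes identically if $\beta(x)=0$, so $Y=\{\beta>0\}$. Condition \eqref{Q3} was already established in Lemma \ref{lmkuy} under the hypothesis $\alpha\beta\in L^1_{\mathrm{loc}}(X,\sigma)$, and \eqref{mmu2} is trivial since $l_2\equiv 0$.

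For condition \eqref{mmu1}, the key step is to differentiate $l_1$ in $s$, giving
$$\left|\frac{\partial}{\partial s}l_1(x,s)\right|=\frac{\beta(x)}{\alpha(x)}\,e^{-s/\alpha(x)}.$$
Since this is decreasing in $s$, the supremum over $u\in[s/n,sn]$ is attained at the left endpoint, producing the bound $(\beta(x)/\alpha(x))\,e^{-s/(n\alpha(x))}$. The plan is then to perform the $s$-integration before the $\sigma$-integration, using
$$\int_{0}^{\epsilon/n}\frac{\beta(x)}{\alpha(x)}\,e^{-s/(n\alpha(x))}\,ds\le\int_{0}^{\infty}\frac{\beta(x)}{\alpha(x)}\,e^{-s/(n\alpha(x))}\,ds=n\beta(x),$$
after which integrating against $\sigma$ over $A\in\mathcal{B}_0(Y)$ yields a bound $n\int_A\beta(x)\,d\sigma(x)<\infty$ by $\beta\in L^1_{\mathrm{loc}}(X,\sigma)$.

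The only technical point worth flagging is precisely this order-of-integration trick. A naive attempt to bound $\sup_u|\partial_u l_1(x,u)|$ by its uniform-in-$u$ value $\beta(x)/\alpha(x)$ would require $\beta/\alpha$ to be locally integrable, which is not part of the hypothesis. Integrating in $s$ first converts the singular factor $1/\alpha(x)$ into an integrable factor $\alpha(x)$ via the exponential, so that the two integrability assumptions $\alpha\beta,\beta\in L^1_{\mathrm{loc}}(X,\sigma)$ exactly suffice, with $\alpha\beta$ feeding \eqref{Q3} through Lemma \ref{lmkuy} and $\beta$ feeding \eqref{mmu1}.
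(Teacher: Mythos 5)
Your proposal is correct and follows essentially the same route as the paper: the decomposition $l_1=l$, $l_2=0$, the hypothesis $\alpha\beta\in L^1_{\mathrm{loc}}$ feeding \eqref{Q3} via Lemma \ref{lmkuy}, the monotonicity of $|\partial_u l_1|$ placing the supremum at the left endpoint $u=s/n$, and the $s$-integration converting $\beta/\alpha$ into the bound $n\beta(x)$ so that only $\beta\in L^1_{\mathrm{loc}}$ is needed for \eqref{mmu1}. The paper evaluates the $s$-integral exactly where you bound it by extending to infinity, but this is an immaterial difference.
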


\begin{proof} Fix any $\epsilon>0$.
In the notations of Corollary \ref{cor2}, we set $l_1=l$ and $l_2=0$. The function $l(x,s)$ is evidently differentiable in the $s$ variable. Thus, we only have to check that, for each $n\in \mathbb N$ and $A\in\mathcal B_0(X)$, $A\subset Y=\{y\in X\mid \beta(y)=0\}$,
$$\int_A\int_0^{\frac \epsilon n}\sup_{u\in\left[\frac s n,sn\right]}\left|\frac \partial{\partial u}l(x,u)\right|ds\,d\sigma(x)<\infty.$$
  We have
$$\left|\frac{\partial}{\partial u}l(x,u)\right|=\frac{\beta(x)}{\alpha(x)}e^{-\frac u{\alpha(x)}},$$
thus
$$\sup_{u\in\left[\frac s n,sn\right]}\left|\frac{\partial}{\partial u}l(x,u)\right|=\frac{\beta(x)}{\alpha(x)}e^{-\frac s{\alpha(x)n}}.$$
We have
\begin{align*}
\int_A\int_0^{\frac \epsilon n}&\frac{\beta(x)}{\alpha(x)}e^{-\frac s{\alpha(x)n}}ds\,d\sigma(x)\\
&=\int_A\frac{\beta(x)}{\alpha(x)}(-n)\alpha(x)\left[e^{-\frac\epsilon{\alpha(x)n^2}}-1\right]\,d\sigma(x)\\
&\le n\int_A\beta(x)d\sigma(x)<\infty.
\end{align*}
Therefore, \eqref{mmu1} holds.
\end{proof}

\begin{remark}
Obviously, the conditions of Lemma \ref{tudr66dr} are satisfied when, for example, the function $\beta$ is locally integrable, while the function $\alpha$ is locally bounded. \end{remark}

\begin{theorem}\label{cyd75i} Let the measure $m$ be given by \eqref{yurt} and assume that the conditions of Lemma \ref{tudr66dr} are satisfied. Then, for $\theta\in C_0(X\to\R_+)$,  the corresponding Radon--Nikodym derivative of the
measure $\mu_m$, $\frac{d\mu_m^\theta}{d\mu_m}$, is given by
\begin{equation*}
\frac {d \mu_{m}^\theta}{d\mu_m}(\eta)=\exp\left[\int_Y(1-\theta^{-1}(x))\frac 1{\alpha(x)}d\eta(x)- \int_Y \log(\theta(x))\beta(x)\,d\sigma(x)\right].
\end{equation*}
\end{theorem}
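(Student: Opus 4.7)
The plan is to apply the master formula \eqref{Q5} of Theorem \ref{hout8r6ode} directly, since its hypotheses have already been verified for this $m$ in Lemma \ref{tudr66dr}. Thus the task reduces to explicitly evaluating the two integrals in the exponent of \eqref{Q5} for the specific choice $l(x,s)=\beta(x)e^{-s/\alpha(x)}$, with $Y=\{x\in X\mid \beta(x)>0\}$.

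For the first (atom) integral I would exploit the fact that the factor $\beta(x)$ cancels in the ratio, giving the clean identity
$$\log\frac{l(x,\theta^{-1}(x)s_x)}{l(x,s_x)}=\bigl(1-\theta^{-1}(x)\bigr)\frac{s_x}{\alpha(x)}.$$
Multiplying by the weight $s_x^{-1}$ from \eqref{Q5} kills the $s_x$ in the numerator and produces the integrand $(1-\theta^{-1}(x))/\alpha(x)$, but now integrated against the measure $s_x^{-1}d\eta(x)$. Writing $\eta=\sum_i s_{x_i}\delta_{x_i}$, one immediately checks that this is the same as integrating $(1-\theta^{-1}(x))/\alpha(x)$ against $d\eta(x)$ itself, yielding the first term on the right-hand side of the stated formula.

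For the second integral, after factoring out $\beta(x)$ the inner $s$-integral becomes a Frullani-type integral which is handled by Lemma \ref{kinglm1}. Concretely, the substitution $s=\alpha(x)u$ together with that lemma gives
$$\int_0^\infty\frac{e^{-s/\alpha(x)}-e^{-\theta^{-1}(x)s/\alpha(x)}}{s}\,ds=\log\theta^{-1}(x)=-\log\theta(x),$$
so the second exponent in \eqref{Q5} collapses to $-\int_Y\log\theta(x)\,\beta(x)\,d\sigma(x)$, completing the calculation. Absolute convergence of this Frullani integral on $Y$ (uniformly over the range of $\theta$, which is bounded above and below by positive constants since $\theta\in C_0(X\to\mathbb{R}_+)$) is already guaranteed by \eqref{tur678} established in Step~1 of the proof of Theorem \ref{hout8r6ode}, so no serious obstacle is anticipated; the proof is essentially a specialization exercise whose only subtle point is the careful bookkeeping of the factor $s_x^{-1}$ in the atom integral.
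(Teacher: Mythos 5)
Your proposal is correct and follows the paper's own proof essentially verbatim: apply Theorem \ref{hout8r6ode} (whose hypotheses are supplied by Lemma \ref{tudr66dr}) and evaluate the two exponent integrals of \eqref{Q5} explicitly, the second one reducing via Lemma \ref{kinglm1} to a Frullani-type integral equal to $-\log\theta(x)$. The only blemish is the wording of the atom-integral step: the factor $s_x^{-1}$ in \eqref{Q5} is applied exactly once, so $\log\bigl(l(x,\theta^{-1}(x)s_x)/l(x,s_x)\bigr)s_x^{-1}=(1-\theta^{-1}(x))/\alpha(x)$ is integrated against $d\eta(x)$ itself rather than against $s_x^{-1}\,d\eta(x)$ — but your final formula is the correct one and agrees with the paper's.
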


\begin{proof}
We have
\begin{align*}
&\int_Y\log\left(\frac {l(x,\theta^{-1}(x)s_x)}{l(x,s_x)}\right)\,s_x^{-1}d\eta(x)\\
&\quad =\int_Y\left[\log\left(l(x,\theta^{-1}(x)s_x))-\log(l(x,s_x)\right)\right]s_x^{-1}d\eta(x)\\
&\quad=\int_Y\left[\log\left(\beta(x)e^{\frac {-s_x\theta^{-1}(x)}{\alpha(x)}}\right)-\log\left(\beta(x)e^{\frac{-s_x}{\alpha(x)}}\right)\right]s_x^{-1}d\eta(x)\\
&\quad=\int_Y[-s_x\theta^{-1}(x)+s_x]\frac 1 {\alpha(x)}s_x^{-1}d\eta(x)\\
&\quad =\int_Y(1-\theta^{-1}(x))\frac 1{\alpha(x)}d\eta(x),
\end{align*}
 and by \eqref{buytr687}
\begin{align*}
&\int_Y\int_{\mathbb{R}_+}\frac{(l(x,s)-l(x,\theta^{-1}(x)s)}{s}\,ds\,d\sigma(x)\\
&\quad = \int_Y\int_{\mathbb{R}_+}\frac {\beta(x) e^{ -\frac s{\alpha(x)}}-\beta(x) e^{-\frac {s\theta^{-1}(x)}{\alpha(x)}}}s\,ds\,dx\\
&\quad=\int _Y\beta(x)\left[-\int_{\mathbb R_+}\left(\frac {e^{\frac s{\alpha(x)}(1-\theta^{-1}(x))}-1}s\,\right )e^{-\frac s{\alpha (x)}}ds\right]d\sigma(x)\\
&\quad =\int_Y\beta (x)\log\left(1-\alpha(x)\frac {1-\theta^{-1}(x)}{\alpha(x)}\right)d\sigma(x)\\
&\quad=\int_Y\beta(x)\log(\theta^{-1}(x))\,d\sigma(x)\\
&\quad =-\int_Y\beta(x)\log(\theta(x))\,d\sigma(x).
\end{align*}
Thus, we have
\begin{equation*}
\int_Y\int_{\mathbb{R}_+}\frac{(l(x,s)-l(x,\theta^{-1}(x)s)}{s}\,ds\,d\sigma(x)=-\int_Y\beta(x) \log(\theta(x))d\sigma(x).
\end{equation*}
Now, by Theorem~\ref{hout8r6ode}, the statement follows.
\end{proof}

\begin{remark}\label{vytd7i5}
Note that, for any $A\in\mathcal B_0(X)$ such that $\sigma(A)>0$ and $\beta(x)>0$ for all $x\in A$, we have $m(A\times\R_+)=\infty$.
\end{remark}


\subsubsection{Completely random measures with a L\'evy measure of logarithmic type near zero}\label{lgrthm1}

Let us consider another example of a quasi-invariant measure. 
Let $Y\in\mathcal B(X)$. 
Consider measurable functions $\alpha:Y\to\R_+$ and $\beta:Y\to\R_+$.
Let $\epsilon\in(0,e^{-1})$ and we define, for $(x,s)\in X\times\R_+$,
 \begin{equation}\label{mmmmuyt65}
l(x,s)=\begin{cases}
\beta(x)(-\log s)^{-\alpha(x)},& x\in Y,\ s\in(0,\epsilon), \\
g(x,s),& x\in Y,\ s\in [\epsilon,\infty),\\
0,& x\not\in Y,\ s\in\mathbb R_+,\end{cases}
\end{equation}
so that on $Y\times(0,\epsilon)$
\begin{equation}\label{ytrumh76}
dm(x,s)=\beta(x)\,\frac{(-\log s)^{-\alpha(x)}}s\,d\sigma(x) \,ds,
\end{equation}
and on $Y\times [\epsilon,\infty)$
\begin{equation}\label{myhtunit56}
dm(x,s)=\frac{g(x,s)}s\,d\sigma(x)\,ds.
\end{equation}
Here we assume that the function $g(x,s)$ is strictly positive and satisfies
\begin{equation}\label{uyfr7}
\int_A\int_\epsilon^\infty \frac{g(x,s)}{s}\,ds\, d\sigma(x)<\infty\end{equation}
for all $A\in\mathcal B_0(Y)$.

\begin{lemma}\label{mkuyt1} Let $\beta\in L^1_{\mathrm{loc}}(Y,\sigma)$. Then the  measure $m$  with the function $l(x,s)$ given by \eqref{mmmmuyt65} satisfies \eqref{Q3}.
\end{lemma}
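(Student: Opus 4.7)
The plan is to verify condition \eqref{Q3} directly by splitting the $s$-integral at the threshold $\epsilon$ to match the piecewise definition of $l(x,s)$ from \eqref{mmmmuyt65}, and then bounding each piece separately. Since $A\in\mathcal{B}_0(Y)$, the contribution from $x\notin Y$ (where $l\equiv 0$) is automatically zero, so one only needs to control
\[
\int_{A\times(0,\epsilon)}\beta(x)(-\log s)^{-\alpha(x)}\min\{s^{-1},1\}\,d\sigma(x)\,ds
\quad\text{and}\quad
\int_{A\times[\epsilon,\infty)}g(x,s)\min\{s^{-1},1\}\,d\sigma(x)\,ds.
\]

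For the tail piece $s\in[\epsilon,\infty)$, the trivial bound $\min\{s^{-1},1\}\le s^{-1}$ reduces the integral to
\[
\int_A\int_\epsilon^\infty \frac{g(x,s)}{s}\,ds\,d\sigma(x),
\]
which is finite for every $A\in\mathcal{B}_0(Y)$ by the standing hypothesis \eqref{uyfr7} on $g$.

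For the near-zero piece $s\in(0,\epsilon)$, the key observation is to exploit the normalization $\epsilon<e^{-1}$. On this range one has $-\log s>1$, and since $\alpha(x)>0$ for every $x\in Y$ this yields the pointwise bound $(-\log s)^{-\alpha(x)}\le 1$. Combining with $\min\{s^{-1},1\}\le 1$, the integrand is dominated by $\beta(x)$ uniformly in $s$, so the integral is bounded by
\[
\epsilon\int_A \beta(x)\,d\sigma(x),
\]
which is finite because $\beta\in L^1_{\mathrm{loc}}(Y,\sigma)$ and $A$ has compact closure.

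The argument is a routine estimate and there is no substantive obstacle. The only point worth emphasizing is that the assumption $\epsilon<e^{-1}$ is precisely what allows one to neutralize the logarithmic factor uniformly in the (possibly unbounded) exponent $\alpha(x)$; without this normalization one would additionally need some local control on $\alpha$.
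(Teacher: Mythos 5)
Your proof is correct and follows essentially the same route as the paper: split the $s$-integral at $\epsilon$, dispose of the tail via hypothesis \eqref{uyfr7}, and bound the near-zero piece by noting that $-\log s\ge 1$ for $s\in(0,\epsilon)\subset(0,e^{-1})$ so that $(-\log s)^{-\alpha(x)}\le 1$, leaving only $\int_A\beta\,d\sigma<\infty$. No gaps.
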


\begin{proof}
By \eqref{uyfr7}, we only need to check that, for any $A\in\mathcal B_0(Y)$
	$$\int _{A\times (0,\epsilon]}l(x,s)\,d\sigma(x)\,ds=
	\int _{A\times (0,\epsilon]}\beta(x)(-\log s)^{-\alpha(x)}\,d\sigma(x)\,ds
	<+\infty.$$
	But, for all $ s\in(0,e^{-1}]$,
	$-\log s\ge 1$,
and since $\alpha(x)>0$, $	( -\log s)^{-\alpha(x)}\le1$.
		Hence, the statement trivially follows.
\end{proof}

\begin{proposition}\label{ute64u38} Let $\beta\in L^1_{\mathrm{loc}}(Y,\sigma)$.
	Then the measure $m$  with the function $l(x,s)$ given by \eqref{mmmmuyt65}  satisfies the conditions of Corollary~\ref{cor2}.
	
\end{proposition}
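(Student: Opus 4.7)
The natural split is $l_1(x,s):=\beta(x)(-\log s)^{-\alpha(x)}$ and $l_2(x,s):=0$ on $Y\times(0,\epsilon)$, with the same $\epsilon\in(0,e^{-1})$ fixed in \eqref{mmmmuyt65}. Then \eqref{mmu2} is automatic, and the whole task reduces to verifying \eqref{mmu1}. The map $s\mapsto(-\log s)^{-\alpha(x)}$ is $C^\infty$ on $(0,1)$, so $l_1(x,\cdot)$ is differentiable on $(0,\epsilon)$ and
\[
\partial_u l_1(x,u)=\beta(x)\,\frac{\alpha(x)(-\log u)^{-\alpha(x)-1}}{u}.
\]

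\textbf{Controlling the supremum.} The idea is to understand the shape of $u\mapsto(-\log u)^{-\alpha(x)-1}/u$ on the interval $[s/n,sn]$. Substituting $v=-\log u$ turns this into $v^{-\alpha(x)-1}e^{v}$, whose derivative vanishes only at $v=\alpha(x)+1$ and which is convex-shaped with a single minimum. Consequently, the supremum of $|\partial_u l_1(x,u)|$ over $u\in[s/n,sn]$ is attained at one of the two endpoints, and in particular
\[
\sup_{u\in[s/n,sn]}|\partial_u l_1(x,u)|\le\beta(x)\alpha(x)\left[\frac{n(-\log(s/n))^{-\alpha(x)-1}}{s}+\frac{(-\log(sn))^{-\alpha(x)-1}}{sn}\right].
\]

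\textbf{Integrating in $s$.} I would then integrate in $s\in(0,\epsilon/n)$ using the substitutions $v=-\log(s/n)$ and $w=-\log(sn)$ respectively; both turn the apparently divergent factor $ds/s$ into a harmless $dv$ (or $dw/n$), and the logarithmic weight $v^{-\alpha(x)-1}$ takes care of the tail. This yields
\[
\int_0^{\epsilon/n}\sup_{u\in[s/n,sn]}|\partial_u l_1(x,u)|\,ds\le\beta(x)\left[n\bigl(\log(n^2/\epsilon)\bigr)^{-\alpha(x)}+\tfrac{1}{n}(-\log\epsilon)^{-\alpha(x)}\right].
\]

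\textbf{Finishing.} The crucial point is that $\epsilon<e^{-1}$ forces $-\log\epsilon>1$ and $\log(n^2/\epsilon)>1$ for every $n\in\mathbb N$, so both bracketed quantities are bounded above by $1$ uniformly in $x\in Y$ (since $\alpha(x)>0$). Hence the whole bound collapses to $(n+1/n)\beta(x)$, and integrating over $A\in\mathcal B_0(Y)$ against $\sigma$ gives a finite quantity by the hypothesis $\beta\in L^1_{\mathrm{loc}}(Y,\sigma)$. This establishes \eqref{mmu1}, and since \eqref{Q3} was already verified in Lemma~\ref{mkuyt1}, Corollary~\ref{cor2} applies. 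The only subtle step I expect is the sign/monotonicity analysis of $v^{-\alpha(x)-1}e^v$ needed to justify that the supremum sits at the endpoints uniformly in $x$; everything else is bookkeeping with two substitutions.
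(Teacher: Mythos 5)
Your proof is correct and follows essentially the same route as the paper: the same trivial split $l_1=l$, $l_2=0$, the same derivative computation, and the same substitution $v=-\log u$ leading to a bound of the form $C_n\beta(x)$ integrated against $\sigma$. The only (harmless) difference is in bounding the supremum over $u\in[s/n,sn]$: the paper simply takes the product of the separate suprema of $1/u$ and $(-\log u)^{-\alpha(x)-1}$, which avoids your monotonicity analysis of $v^{-\alpha(x)-1}e^{v}$ and yields the single term $\beta(x)\alpha(x)\,n\,s^{-1}(-\log(sn))^{-\alpha(x)-1}$, but both estimates close the argument.
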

\begin{proof}
	Let us set $l_1(x,s) =l(x,s)$ and $l_2(x,s)=0$. It suffices to show that, for each $n\in \mathbb N$ and $A\in\mathcal B_0(Y)$,
	$$\int_A\int_0^{\frac \epsilon n}\sup_{u\in[\frac s n,sn]}\left|\frac {\partial}{\partial  u}l(x,u)\right|d\sigma(x)\,ds<\infty.$$
	We have
$$	\frac \partial{\partial u}l(x,u)=\frac {\beta(x)\alpha(x)(-\log u)^{-\alpha(x)-1}}u.$$
Hence, for each $s\in(0,\frac\epsilon n)$,
\begin{align*}
\sup_{u\in[\frac s n,sn]}\left|\frac {\partial}{\partial  u}l(x,u)\right|&=\beta(x)\alpha(x)\sup_{u\in[\frac s n, sn]}\frac {(-\log u)^{-\alpha(x)-1}}u\\
&\le \beta(x)\alpha(x)\left(\sup_{u\in[\frac s n, sn]}\frac1u\right)
\left(\sup_{u\in[\frac s n, sn]}(-\log u)^{-\alpha(x)-1}\right)\\
&=\beta(x)\alpha(x)\frac n{s(-\log (sn))^{\alpha(x)+1}}.
\end{align*}
Then we have
\begin{align*}
&\int_Ad\sigma(x)\int_0^{\frac \epsilon n}ds\,\beta(x)\alpha(x)\frac n s\frac 1 {(-\log (sn))
^{\alpha(x)+1}}\\
&\quad=n\int_Ad\sigma(x)\beta(x)\alpha(x)\int_{-\infty}^{\log \epsilon}ds\,\frac 1{(-s)^{\alpha(x)+1}}\\
&\quad=n\int_A d\sigma(x)\frac {\beta(x)}{(-\log \epsilon)^{\alpha(x)}}\\
&\quad \le \int_A \beta(x)\,d\sigma(x) <\infty.
\end{align*}
Therefore, the conditions of Corollary~\ref{cor2} are satisfied.
\end{proof}

We finish this part with the following observation, which we will use later on.

\begin{proposition}\label{prop2}	(i)
 Assume that $\alpha(x)>1$ for all $x\in X$ and
$$\int_A\frac {\beta(x)} {\alpha(x)-1}\,d\sigma(x)<\infty,$$
for each $A\in \mathcal{B}_0(Y)$. Then
$$m(A\times\mathbb R_+)=\int_Ad\sigma(x)\int_{\mathbb R_+}\frac {l(x,s)}sds<\infty.$$
(ii) Assume that $\alpha(x)\le 1$ for all $x\in X$. Then, for each $A\in \mathcal B_0(Y)$ with $\sigma(A)>0$, we have
$$m(A\times\mathbb R_+)=\int_A d\sigma(x) \int_{\mathbb R_+}\frac {l(x,s)}sds=+\infty.$$
\end{proposition}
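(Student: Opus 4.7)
The plan is to split the $s$-integral at $\epsilon$, handle the tail using the assumption \eqref{uyfr7}, and analyze the behaviour near zero via the substitution $u=-\log s$. Since the integrand is nonnegative, Tonelli's theorem lets us freely exchange the order of integration, so everything reduces to determining whether $\int_0^\epsilon \beta(x)(-\log s)^{-\alpha(x)} s^{-1}\,ds$ is finite.

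For the tail, \eqref{uyfr7} together with \eqref{myhtunit56} gives
\[
\int_A\int_\epsilon^\infty \frac{l(x,s)}{s}\,ds\,d\sigma(x)=\int_A\int_\epsilon^\infty\frac{g(x,s)}{s}\,ds\,d\sigma(x)<\infty
\]
for every $A\in\mathcal B_0(Y)$. So both statements (i) and (ii) come down to the integral on $(0,\epsilon)$. Applying the substitution $u=-\log s$ (so $\frac{ds}{s}=-du$), and using that $-\log\epsilon>1$ because $\epsilon<e^{-1}$, I get
\[
\int_0^\epsilon\beta(x)\,\frac{(-\log s)^{-\alpha(x)}}{s}\,ds=\beta(x)\int_{-\log\epsilon}^\infty u^{-\alpha(x)}\,du.
\]

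For part (i), when $\alpha(x)>1$ the last integral equals $\dfrac{(-\log\epsilon)^{1-\alpha(x)}}{\alpha(x)-1}\le\dfrac{1}{\alpha(x)-1}$, because $-\log\epsilon>1$ and the exponent $1-\alpha(x)$ is negative. Integrating in $x$ over $A\in\mathcal B_0(Y)$ and using the hypothesis $\int_A\frac{\beta(x)}{\alpha(x)-1}\,d\sigma(x)<\infty$ then yields finiteness of the $(0,\epsilon)$ piece, hence $m(A\times\mathbb R_+)<\infty$.

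For part (ii), when $\alpha(x)\le 1$ the integral $\int_{-\log\epsilon}^\infty u^{-\alpha(x)}\,du$ diverges at $+\infty$ for every $x\in Y$. Since $\beta(x)>0$ on $Y$, the inner integral is $+\infty$ for every $x\in A$; and as $\sigma(A)>0$, Tonelli gives $m(A\times\mathbb R_+)=+\infty$. The only thing that requires a bit of care is the endpoint case $\alpha(x)=1$ (where the antiderivative becomes logarithmic rather than a power), but the divergence at $u=\infty$ is visible in both cases, so no separate argument is needed; this is really the only mild subtlety in an otherwise routine calculation.
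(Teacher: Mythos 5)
Your proof is correct and follows essentially the same route as the paper's: split the $s$-integral at $\epsilon$, dispose of the tail via \eqref{uyfr7}, and reduce the piece near zero by the substitution $u=-\log s$ to $\int_{-\log\epsilon}^{\infty}u^{-\alpha(x)}\,du$, which converges precisely when $\alpha(x)>1$ and is then bounded by $\frac{1}{\alpha(x)-1}$ since $-\log\epsilon>1$. Your explicit handling of the constant $(-\log\epsilon)^{1-\alpha(x)}$ and the remark on the borderline case $\alpha(x)=1$ are, if anything, slightly more careful than the paper's own computation.
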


\begin{proof} For each $A\in \mathcal{B}_0(Y)$, we have
	$$
	\int_A d\sigma(x)\int_0^{+\infty} ds\,\frac {l(x,s)} s ds=\int_A d\sigma(x) \int_0^\epsilon ds\,\frac {l(x,s)}s+\int_A d\sigma(x)\int_\epsilon^{+\infty}\frac {g(x,s)} s.
	$$
	By \eqref{uyfr7} the second integral on the right hand side is finite. Hence, we need to calculate the first integral on the right hand side.

(i) We have
\begin{align*}
\int_A d\sigma(x)\int_0^\epsilon ds\,\frac {l(x,s)}s&=\int_A d\sigma(x)\int_0^\epsilon\frac{ds} s\frac{\beta(x)}{(-\log s)^{\alpha(x)}}\\
&=\int_A\frac 1 {\alpha(x)-1}(-\log \epsilon)^{-\alpha(x)}d\sigma(x)\\
&\le \int_A\frac {\beta(x)}{\alpha(x)-1}\,d\sigma(x) <+\infty.
\end{align*}

(ii)  We have
\begin{align*}
\int_A d\sigma(x)\int_0^\epsilon ds\,\frac {l(x,s)}s
&=\int_A d\sigma(x)\,\beta(x)\int_{-\log \epsilon}^{+\infty}ds\frac 1{s^{\alpha(x)}} =+\infty.
\end{align*}
\end{proof}

\subsubsection{Completely random measures with a L\'evy measure of power type near zero}\label{vuyfr576re7i}
Let $Y\in \mathcal B(X)$.
Let  functions $ \alpha:Y\to(0,1)$ and $\beta:Y\to \mathbb{R}_+$ be measurable. Let $\epsilon\in(0,1)$. We define for $(x,s)\in X\times \mathbb{R}_+$
 \begin{equation}\label{yfddgyt65}
 l(x,s)=\begin{cases}
\beta(x) s^{1-\alpha(x)},& x\in Y,\ s\in(0,\epsilon), \\
 g(x,s),& x\in Y,\ s\in [\epsilon,\infty),\\
 0,& x\not\in Y.\end{cases}
 \end{equation}
 Thus, on $Y\times (0,\epsilon)$,
 \begin{equation}
 \label{hhhdyoe8ei}
 dm(x,s)=\frac {\beta(x)}{s^{\alpha(x)}}\,d\sigma(x)\,ds,
 \end{equation}
and on $Y\times [\epsilon,\infty)$
\begin{equation}
\label{sgyhdghjdj}
dm(x,s)=\frac  {g(x,s)}s\,d\sigma(x)\,ds.
\end{equation}

\begin{lemma}\label{diyu8} Let the  measure $m$  have the function $l(x,s)$ defined by \eqref{yfddgyt65}. Let $\beta\in L^1_{\mathrm{loc}}(Y,\sigma)$. Then $m$ satisfies \eqref{Q3}.
\end{lemma}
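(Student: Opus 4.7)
The plan is to verify condition \eqref{Q3} directly by splitting the $s$-integration at the threshold $\epsilon$ and treating the two regimes separately. Fix $A\in\mathcal B_0(Y)$. Then
\[
\int_{A\times\R_+} l(x,s)\min\{s^{-1},1\}\,d\sigma(x)\,ds = I_1 + I_2,
\]
where $I_1$ is the integral over $A\times(0,\epsilon)$ and $I_2$ is the integral over $A\times[\epsilon,\infty)$. Since $l$ vanishes outside $Y\times\R_+$ by \eqref{yfddgyt65}, restricting to $A\subset Y$ loses nothing.

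For $I_1$, since $\epsilon<1$, on $(0,\epsilon)$ we have $s<1$ and thus $\min\{s^{-1},1\}=1$. Inserting the explicit form of $l$ gives
\[
I_1=\int_A\beta(x)\left(\int_0^{\epsilon}s^{1-\alpha(x)}\,ds\right)d\sigma(x)=\int_A\beta(x)\,\frac{\epsilon^{2-\alpha(x)}}{2-\alpha(x)}\,d\sigma(x).
\]
The key point here is to obtain a bound on the inner integral that is uniform in $x$. Since $\alpha(x)\in(0,1)$, we have $2-\alpha(x)\in(1,2)$, so $\frac{1}{2-\alpha(x)}\le 1$, and because $\epsilon<1$ with $2-\alpha(x)>1$, also $\epsilon^{2-\alpha(x)}\le\epsilon$. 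Hence $I_1\le \epsilon\int_A\beta(x)\,d\sigma(x)<\infty$ by the hypothesis $\beta\in L^1_{\mathrm{loc}}(Y,\sigma)$.

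For $I_2$, I use $\min\{s^{-1},1\}\le s^{-1}$ for all $s\in\R_+$, so
\[
I_2\le \int_A\int_\epsilon^\infty \frac{g(x,s)}{s}\,d\sigma(x)\,ds,
\]
which is finite by the standing assumption on $g$ (the analogue of \eqref{uyfr7}, imposed here as in the logarithmic case of Section \ref{lgrthm1}). Combining the two estimates yields \eqref{Q3}. There is no serious obstacle; the only thing to notice is that the condition $\alpha(x)<1$ built into the setup is exactly what makes $s^{1-\alpha(x)}$ integrable near zero and, more importantly, provides the uniform-in-$x$ bound needed to factor out $\beta(x)$ and apply local integrability.
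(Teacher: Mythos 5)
Your proof is correct and follows essentially the same route as the paper: split the $s$-integral at $\epsilon$, bound $\int_0^\epsilon s^{1-\alpha(x)}\,ds$ by a constant uniform in $x$ (the paper uses the bound $1$, you use the slightly sharper $\epsilon$) so that local integrability of $\beta$ finishes the small-$s$ part, and invoke the condition \eqref{uyfr7} on $g$ for the tail. Your explicit remark that $\min\{s^{-1},1\}=1$ on $(0,\epsilon)$ and $\le s^{-1}$ on $[\epsilon,\infty)$ just spells out what the paper leaves implicit.
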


\begin{proof}For each $A\in\mathcal B_0(Y)$,
$$\int_A\int_0^\epsilon \beta(x)s^{1-\alpha(x)}\,ds\,d\sigma(x)\le \int_A \beta(x)\,d\sigma(x)<\infty.$$
By \eqref{uyfr7}, the statement follows.
\end{proof}

\begin{proposition}\label{jkgiyl}
	Let the  measure $m$  have the function $l(x,s)$ given by \eqref{yfddgyt65}. Let $\beta\in L^1_{\mathrm{loc}}(Y,\sigma)$. 
Then the measure $m$  satisfies the conditions of Corollary~\ref{cor2}.
\end{proposition}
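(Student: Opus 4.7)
The plan is to apply Corollary~\ref{cor2} directly with the trivial decomposition $l_1(x,s):=\beta(x)s^{1-\alpha(x)}$ and $l_2(x,s):=0$ on $Y\times(0,\epsilon)$, using the very same $\epsilon\in(0,1)$ that appears in \eqref{yfddgyt65}. Condition \eqref{Q3} has already been established in Lemma~\ref{diyu8}, and \eqref{mmu2} is vacuous. So the only work is to verify \eqref{mmu1}.

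First I would differentiate:
$$\frac{\partial}{\partial u}l_1(x,u)=\beta(x)(1-\alpha(x))\,u^{-\alpha(x)}.$$
Since $-\alpha(x)<0$, the map $u\mapsto u^{-\alpha(x)}$ is strictly decreasing, so for $u\in[s/n,sn]$ the supremum is attained at $u=s/n$ and equals $n^{\alpha(x)}s^{-\alpha(x)}$. This gives the pointwise bound
$$\sup_{u\in[s/n,sn]}\left|\tfrac{\partial}{\partial u}l_1(x,u)\right|=\beta(x)(1-\alpha(x))\,n^{\alpha(x)}\,s^{-\alpha(x)}.$$

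Next I would substitute this into \eqref{mmu1} and carry out the inner $s$-integral on $(0,\epsilon/n)$. Since $\alpha(x)<1$, the antiderivative $s^{1-\alpha(x)}/(1-\alpha(x))$ is finite at $0$, and one obtains
$$\int_0^{\epsilon/n}\beta(x)(1-\alpha(x))n^{\alpha(x)}s^{-\alpha(x)}\,ds
=\beta(x)\,n^{2\alpha(x)-1}\,\epsilon^{\,1-\alpha(x)}.$$
The key observation is that this expression is uniformly bounded in $x$ by a constant multiple of $\beta(x)$: because $2\alpha(x)-1<1$ we have $n^{2\alpha(x)-1}\le n$, and because $\epsilon<1$ and $1-\alpha(x)>0$ we have $\epsilon^{1-\alpha(x)}\le 1$. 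Hence the integrand is dominated by $n\beta(x)$, and
$$\int_A\int_0^{\epsilon/n}\sup_{u\in[s/n,sn]}\left|\tfrac{\partial}{\partial u}l_1(x,u)\right|ds\,d\sigma(x)\le n\int_A\beta(x)\,d\sigma(x)<\infty$$
by the local integrability of $\beta$, proving \eqref{mmu1} for every $n\in\mathbb N$ and $A\in\mathcal B_0(Y)$.

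The only subtle point is the lack of a uniform bound on $\alpha(x)$ away from $0$ or $1$, which a priori could blow up either the constant $n^{\alpha(x)}$ (as $\alpha(x)\to 1$) or the factor $1/(1-\alpha(x))$ coming from the $s$-integral (again as $\alpha(x)\to 1$). The cancellation between the $(1-\alpha(x))$ from the derivative and the $1/(1-\alpha(x))$ from integration is what saves the argument; this is the only place where the structural hypothesis $\alpha(x)\in(0,1)$ is genuinely used.
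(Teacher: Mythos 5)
Your proof is correct and follows essentially the same route as the paper: the trivial decomposition $l_1=l$, $l_2=0$, the computation of $\sup_{u\in[s/n,sn]}|\partial_u l_1(x,u)|=\beta(x)(1-\alpha(x))n^{\alpha(x)}s^{-\alpha(x)}$, and the bound $n\int_A\beta\,d\sigma<\infty$. The only cosmetic difference is that the paper applies the estimate $n^{\alpha(x)}\le n$ before integrating in $s$ while you integrate exactly and bound $n^{2\alpha(x)-1}\epsilon^{1-\alpha(x)}\le n$ afterwards; the cancellation of the factor $1-\alpha(x)$ that you highlight is indeed the same mechanism at work in the paper's computation.
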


\begin{proof}
	We set $l_1(x,s)=l(x,s)$ and $l_2(x,s)=0$. Then 
	$$\frac {\partial}{\partial u}l(x,u)=\frac {\beta(x)(1-\alpha(x))}{u^{\alpha(x)}}.$$
Hence,
$$\sup_{u\in\left[\frac sn,sn\right]}\left|  \frac {\partial}{\partial u}l(x,u)\right|=\frac{\beta(x)(1-\alpha(x))n^{\alpha(x)}}{s^{\alpha(x)}}\le \frac{\beta(x)(1-\alpha(x))n}{s^{\alpha(x)}}.$$	
		Hence,  for each $A\in\mathcal{B}_0(Y)$,
	\begin{align*}
	&\int_ A\int_0^{\frac \epsilon n}\sup_{u\in\left[\frac {s }n,sn\right]}\left|\frac \partial {\partial u}l(x,u)\right|\,d\sigma(x)\,ds\\
	&\quad\le \int_ A\int_0^{\frac \epsilon n}\frac{\beta(x)(1-\alpha(x))n}{s^{\alpha(x)}}\,d\sigma(x)\,ds\\
	&\quad=\int_A \beta(x)n\left(\frac\epsilon n\right)^{-\alpha(x)+1}\,d\sigma(x)\\
	&\quad\le n\int_A \beta(x)\,d\sigma(x)<\infty.
		\end{align*}
	Therefore, the conditions of Corollary~\ref{cor2} are satisfied.
\end{proof}

\begin{proposition}\label{nbuiytr67o5}
Let the conditions of Proposition \ref{jkgiyl} be satisfied. 

(i) Assume additionally that $\frac\beta{1-\alpha}\in L^1_{\mathrm{loc}}(X,\sigma)$. Then, for each $A\in\mathcal B_0(Y)$, we have $m(A\times\R_+)<\infty$.

(ii) Assume that $A\in\mathcal B_0(Y)$ and
$$\int_A \frac{\beta(x)}{1-\alpha(x)}\,d\sigma(x)=\infty.$$
Then $m(A\times\R_+)=\infty$.
\end{proposition}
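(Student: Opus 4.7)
The plan is to compute $m(A\times\mathbb{R}_+)$ by splitting the $s$-integral at the cutoff $\epsilon$ and using the explicit power-type form of $l(x,s)$ near zero. Concretely, for any $A\in\mathcal B_0(Y)$,
\[
m(A\times\mathbb R_+) \;=\; \int_A\!\int_0^\epsilon \frac{\beta(x)}{s^{\alpha(x)}}\,ds\,d\sigma(x) \;+\; \int_A\!\int_\epsilon^\infty \frac{g(x,s)}{s}\,ds\,d\sigma(x).
\]
The second term is always finite by assumption~\eqref{uyfr7}, so the entire question reduces to analyzing the first term.

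For the first term, since $\alpha(x)\in(0,1)$, the inner integral can be evaluated in closed form:
\[
\int_0^\epsilon \frac{ds}{s^{\alpha(x)}} \;=\; \frac{\epsilon^{\,1-\alpha(x)}}{1-\alpha(x)}.
\]
This is exactly the expression that produces the weight $\frac{\beta(x)}{1-\alpha(x)}$ appearing in both parts of the proposition; the only remaining task is to control the factor $\epsilon^{1-\alpha(x)}$ uniformly in $x\in A$.

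For part (i), I would use that $\epsilon\in(0,1)$ and $1-\alpha(x)\in(0,1)$ imply $\epsilon^{1-\alpha(x)}\le 1$, yielding
\[
\int_A \frac{\beta(x)\,\epsilon^{1-\alpha(x)}}{1-\alpha(x)}\,d\sigma(x) \;\le\; \int_A \frac{\beta(x)}{1-\alpha(x)}\,d\sigma(x) \;<\;\infty,
\]
by the hypothesis $\frac{\beta}{1-\alpha}\in L^1_{\mathrm{loc}}(X,\sigma)$. Together with finiteness of the $[\epsilon,\infty)$-part, this gives $m(A\times\mathbb R_+)<\infty$.

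For part (ii), I would instead bound $\epsilon^{1-\alpha(x)}$ from \emph{below}: since $1-\alpha(x)<1$ and $\epsilon<1$, we have $\epsilon^{1-\alpha(x)}\ge\epsilon$. Discarding the non-negative $[\epsilon,\infty)$-part, this yields
\[
m(A\times\mathbb R_+) \;\ge\; \int_A \frac{\beta(x)\,\epsilon^{1-\alpha(x)}}{1-\alpha(x)}\,d\sigma(x) \;\ge\; \epsilon \int_A \frac{\beta(x)}{1-\alpha(x)}\,d\sigma(x) \;=\; +\infty.
\]
There is no real obstacle here: once one notes that $\epsilon<1$ forces the two-sided bound $\epsilon\le\epsilon^{1-\alpha(x)}\le 1$ on $Y$, both parts follow from the same explicit $s$-integration, the finiteness or divergence being dictated entirely by the integrability of $\beta/(1-\alpha)$ on $A$.
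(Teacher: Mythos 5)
Your proposal is correct and follows essentially the same route as the paper: split the $s$-integral at $\epsilon$, note the $[\epsilon,\infty)$-part is finite by \eqref{uyfr7}, evaluate $\int_0^\epsilon s^{-\alpha(x)}\,ds=\epsilon^{1-\alpha(x)}/(1-\alpha(x))$, and use the two-sided bound $\epsilon\le\epsilon^{1-\alpha(x)}\le1$ to reduce both parts to the integrability of $\beta/(1-\alpha)$ over $A$. This is precisely the paper's argument, just written out in more detail.
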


\begin{proof} Analogously to the proof of Proposition \ref{prop2}, we only need to consider the integral 
$$
\int_A d\sigma(x) \int_0^\epsilon ds\,\frac {l(x,s)}s
=\int_A \frac{\beta(x)}{1-\alpha(x)}\,\epsilon^{1-\alpha(x)}\,d\sigma(x).
$$
Noting that
$$\epsilon\le \epsilon^{1-\alpha(x)}\le 1,$$
we easily conclude the statement.
\end{proof}

\section{Quasi-invariance of completely random measures with respect to transformations of atoms}\label{ty77o897}

From now on, we will assume that $X=\R^d$ and $\sigma$ is the Lebesgue measure $dx$. (More generally, we could assume that $X$ is a smooth Riemannian manifold and $\sigma$ is a volume measure on it.)

In this section, we will consider the transformations of the atoms of completely random measures by the action of the group of diffeomorphisms which are identical outside a compact set.

\subsection{General theory}
 
A {\it diffeormorphism of $X=\mathbb{R}^d$} is a bijective mapping
$\varphi:X\to X$
such that both $\varphi$ and $\varphi^{-1}$ are infinitely differentiable. 
We say that a diffeomorphism $\varphi$ has {\it compact support} if there exists a compact set $\Lambda\subset X$ such that
$\varphi(x)=x$ for all $x\in \Lambda^c$. 
 We denote by
 $\Dif_0(X)$ the set of all  diffeomorphisms of $X$ which have compact support.

It is clear that for any $\varphi,\psi\in\Dif_0(X)$, their composition $\varphi\circ\psi $ again belongs to $\Dif_0(X)$. So we define a group product on $\Dif_0(X)$ as the composition of two diffeomorphisms. The neutral element of this group is the identity mapping $e$. Note that the product in this group is non-commutative.

The group $\Dif_0(X)$ naturally acts on $X$: for each $\varphi\in \Dif_0(X)$, $\varphi(x)$ is the action of $\varphi$ on $x\in X$.
Furthermore, the group $\Dif_0(X)$ naturally acts on $\mathbb{M}(X)$, the space of Radon measures on $X$. For each $\varphi\in\Dif
_0(X)$ and $\eta\in\mathbb{M(X)}$, the action of $\varphi$ on $\eta$ is defined by $\varphi ^*\eta$, the pushforward of $\eta$ under $\varphi$;
$$\varphi^*\eta(\Delta)=\eta(\varphi^{-1}\Delta),\quad\Delta\in\mathcal{B}(X).$$
Clearly $\varphi^*\eta\in\mathbb{M}(X)$.

 Let $\eta\in\mathbb{K}(X)$,
\begin{equation}
\label{byyjyhuo727}
\eta =\sum_is_i\delta_{x_i}.\end{equation}
 Then, for $\varphi\in\Dif_0(X)$
 \begin{equation}
 \label{yheehjdyui1}
 \varphi^*\eta=\sum_is_i\delta_{\varphi(x_i)}.\end{equation}
 In particular, $\varphi^*\eta\in\mathbb{K}(X)$, that is the group $\Dif_0(X)$ acts on $\mathbb{K}(X)$.
 
 Note that each $\varphi\in\Dif_0(X)$ transforms the atoms of a discrete measure, leaving the weights without changes. 

If $\mu$ is a probability measure on $\mathbb{K}(X)$, there is a natural question whether $\mu$ is quasi-invariant with respect to the action of $\Dif_0(X)$. If this is indeed the case, one gets a quasi-regular representation of $\Dif_0(X)$ in $L^2(\mathbb{K}(X),\mu)$.

\begin{theorem}\label{rte6ui4i}
	Let $m$ be a measure on $X\times\mathbb{R}_+$ which satisfies \eqref{tyr75ire}, \eqref{cmpl4.1}. Let $\mu_m$ be the corresponding completely random measure, see Corollary~\ref{mygfrcor45}. For each $\varphi\in\Dif_0(X)$, we extend the action of $\varphi$ to $X\times\mathbb{R}_+$ by setting
\begin{equation}
\label{y37hhy9}
X\times\mathbb{R}_+\ni(x,s)\mapsto (\varphi(x),s)\in X\times \mathbb{R}_+,
\end{equation}
which is a smooth diffeomorphism of $X\times \mathbb{R}_+$.
Let $m_\varphi:=\varphi^*m$ be the pushforward of the measure $m$ under \eqref{y37hhy9}. Then $\mu_m$	is quasi-invariant with respect to the action of $\Dif_0(X)$ if and only if, for each $\varphi\in\Dif_0(X)$,
\begin{itemize}
	\item  $m$ and $m_\varphi$ are equivalent;
	\item $\displaystyle\int _{\hat X}\left(\sqrt{\frac {dm_\varphi}{dm}}-1\right)^2dm<\infty.$
\end{itemize}
\end{theorem}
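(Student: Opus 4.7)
\medskip

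The plan is to lift the action of $\Dif_0(X)$ on $\mathbb K(X)$ to the space of pinpointing configurations $\Gamma_{pf}(\hat X)$ via the bijection $\mathcal R$ constructed in the proof of Theorem~\ref{thrm 2}, and then deduce quasi-invariance of $\mu_m$ from the classical characterization of equivalence of Poisson measures (Theorem~\ref{vufr76}). Concretely, writing $\tilde\varphi:\hat X\to \hat X$ for the lifted diffeomorphism $(x,s)\mapsto(\varphi(x),s)$, the first step is to verify the key intertwining identity
\begin{equation*}
\mathcal R\circ \tilde\varphi^{\,*} = \varphi^*\circ \mathcal R \quad\text{on }\Gamma_{pf}(\hat X),
\end{equation*}
which is immediate from the definitions~\eqref{R2} and \eqref{yheehjdyui1}: if $\gamma=\{(x_i,s_i)\}$, then $\tilde\varphi^{\,*}\gamma=\{(\varphi(x_i),s_i)\}$, and applying $\mathcal R$ gives $\sum_i s_i\delta_{\varphi(x_i)}=\varphi^*(\sum_i s_i\delta_{x_i})$.

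Next, I would verify that $m_\varphi:=\tilde\varphi^{\,*}m$ is again a non-atomic Radon measure on $\hat X$ satisfying~\eqref{tyr75ire} and~\eqref{cmpl4.1}, so that the Poisson measure $\pi_{m_\varphi}$ on $\Gamma(\hat X)$ is well defined and (by the same argument as in the proof of Theorem~\ref{thrm 2}) is concentrated on $\Gamma_{pf}(\hat X)$. This is routine because $\tilde\varphi$ is a smooth diffeomorphism of $\hat X$ that is the identity outside $K\times\mathbb R_+$ for some compact $K\subset X$; in particular $m_\varphi=m$ off $K\times\mathbb R_+$. A standard computation with Fourier transforms then shows that the pushforward of $\pi_m$ under the canonical action of $\tilde\varphi$ on $\Gamma(\hat X)$ is precisely $\pi_{m_\varphi}$: indeed, for $f\in C_0(\hat X)$,
\begin{equation*}
\int_{\Gamma_{pf}(\hat X)} e^{i\langle f,\tilde\varphi^{\,*}\gamma\rangle}\,d\pi_m(\gamma)
=\exp\!\left[\int_{\hat X}(e^{if\circ\tilde\varphi(x,s)}-1)\,dm(x,s)\right]
=\exp\!\left[\int_{\hat X}(e^{if(x,s)}-1)\,dm_\varphi(x,s)\right].
\end{equation*}
Combining this with the intertwining identity yields $\mu_m^\varphi=\mathcal R_{*}\pi_{m_\varphi}$.

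Since $\mathcal R$ is a bijective measurable isomorphism with measurable inverse (see~\eqref{hyutw3r4}), equivalence of $\mu_m$ and $\mu_m^\varphi$ on $\mathbb K(X)$ is the same as equivalence of $\pi_m$ and $\pi_{m_\varphi}$ on $\Gamma_{pf}(\hat X)$. Applying Theorem~\ref{vufr76} to the pair of intensity measures $m,m_\varphi$ on the locally compact Polish space $\hat X$, this equivalence holds if and only if $m\sim m_\varphi$ and the Hellinger-type bound
\begin{equation*}
\int_{\hat X}\Bigl(\sqrt{\tfrac{dm_\varphi}{dm}}-1\Bigr)^2 dm<\infty
\end{equation*}
is satisfied, which is precisely the statement of the theorem. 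The only nontrivial technical point I anticipate is the verification that $\pi_{m_\varphi}$ is concentrated on $\Gamma_{pf}(\hat X)$ (i.e.\ that the finite local mass property is preserved under the lifted diffeomorphism); this is the main obstacle, but it is resolved by the observation that $\tilde\varphi$ maps the fiber $K\times\mathbb R_+$ to itself and acts as the identity on $s$, so $\mathfrak M_A(\tilde\varphi^{\,*}\gamma)=\mathfrak M_{\varphi^{-1}(A)}(\gamma)$ and finiteness transfers.
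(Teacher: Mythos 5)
Your proposal is correct and follows essentially the same route as the paper: lift the action to the Poisson point process $\pi_m$ on $\hat X=X\times\mathbb R_+$ via $\mathcal R$, identify the pushforward of $\pi_m$ under $(x,s)\mapsto(\varphi(x),s)$ as $\pi_{m_\varphi}$ by a Fourier-transform computation, and then invoke Theorem~\ref{vufr76}. The extra care you take with the preservation of the finite-local-mass property (via $\mathfrak M_A(\tilde\varphi^{\,*}\gamma)=\mathfrak M_{\varphi^{-1}(A)}(\gamma)$) is a correct refinement that the paper leaves implicit.
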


\begin{proof}
	In view of \eqref{byyjyhuo727}, \eqref{yheehjdyui1} and the construction of the measure $\mu_m$, $\mu_m$ is quasi-invariant with respect to $\Dif_0(X)$ if and only if the Poisson measure $\pi _m$ is quasi-invariant under the following action of $\Dif_0(X)$ onto $\Gamma(\hat{X})$:
	\begin{equation}\label{unsgdo7uu}
	\gamma=\{(x_i,s_i)\}\mapsto\varphi\gamma:=\{(\varphi(x_i),s_i)\},
	\end{equation}
	where $\varphi\in\Dif_0(X)$. Note that, for each $\gamma\in\Gamma(\hat X)$, $\varphi\gamma$ indeed belongs to $\Gamma(\hat X)$. 
	
	Let $\varphi^*\pi_m$ be the pushforward of $\pi_m$ under \eqref{unsgdo7uu}. We claim that
	$$\varphi^*\pi_m=\pi_{\varphi^*m}=\pi_{m_\varphi},$$
	i.e., the Poisson measure on $\Gamma(\hat X)$ with intensity measure $m_\varphi$. Indeed, for each $f\in C_0(\hat X)$, we have
	\begin{align*}
	\int_{\Gamma_{pf}(\hat X)}e^{i\langle f,\gamma\rangle}d(\varphi^*\pi_m)(\gamma)&=\int_{\Gamma_{pf}(\hat X)}e^{i\sum_{(x,s)\in\gamma}f(x,s)}d(\varphi^*\pi_m)(\gamma)\\&=\int_{\Gamma_{pf}(\hat X)}e^{i\sum _{(x,s)\in \gamma}f(\varphi(x),s)}d\pi_m(\gamma)\\
	&=\exp\left[\int_{\hat X}\left(e^{if(\varphi(x),s)}-1\right)dm(x,s)\right]\\
	&=\exp\left[\int_{\hat X}\left(e^{if(x,s)}-1\right)dm_\varphi(x,s)\right]\\
	&=\int_{\Gamma_{pf}(\hat X)}e^{i\langle f,\gamma\rangle}d\pi_{m_\varphi}(\gamma).
	\end{align*}
	Now the statement of the theorem immediately follows from the  Theorem~\ref{vufr76}.
\end{proof}

\begin{corollary}\label{ttgvdcv8yrfhi}
	Let the measure $m$ on $\hat X$ be of the form \eqref{Q1},
	let $l(x,s)>0$ for all $x\in X$ and $s\in\R_+$, and let   \eqref{Q3} be satisfied for all $A\in\mathcal B_0(X)$. Let $\mu _m$ be the corresponding completely random measure. Then $\mu_m$ is quasi-invariant with respect to the action of $\Dif_0(X)$ if and only if, for each $\varphi\in\Dif_0(X)$,
	\begin{multline}\label{udyfhfioyhi08}
	\int_{\hat X}\left(\sqrt{\frac{l(\varphi^{-1}(x),s)}{l(x,s)}\,J_\varphi(x)}-1\right)^2\frac{l(x,s)}{s}\,dx\,ds\\
	=\int_{\hat X}\left(\sqrt{l(\varphi^{-1}(x),s)J_\varphi(x)}-\sqrt{l(x,s)}\right)^2\frac{1}{s}\,dx\,ds<+\infty,
	\end{multline}
	where $J_{\varphi}(x)$ is the modulus of the determinant of the Jacobian matrix  of $\varphi.$
\end{corollary}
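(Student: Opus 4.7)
The plan is to apply Theorem~\ref{rte6ui4i} to the specific measure $m$ in \eqref{Q1}, so that everything reduces to an explicit computation of the Radon--Nikodym derivative $dm_\varphi/dm$ together with the corresponding Hellinger integral.

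First I would handle the equivalence of $m$ and $m_\varphi$: under the hypotheses, $m$ is absolutely continuous with respect to $dx\,ds$ on $\hat X$ with the strictly positive density $l(x,s)/s$. Since $\varphi\in\Dif_0(X)$ is a diffeomorphism, $J_\varphi(x)>0$ everywhere, so the pushforward $m_\varphi$ under the extended map $T(x,s)=(\varphi(x),s)$ is also absolutely continuous with respect to Lebesgue measure with a strictly positive density. Hence $m$ and $m_\varphi$ are automatically equivalent, and the first condition of Theorem~\ref{rte6ui4i} is verified without further work. The ``if and only if'' in the corollary therefore collapses to checking the Hellinger-type integrability condition.

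Next I would compute $dm_\varphi/dm$ by the standard change-of-variables formula for the pushforward of an absolutely continuous measure under the smooth diffeomorphism $T$, whose Jacobian comes solely from the $x$-slot and equals $J_\varphi$. This yields
$$
\frac{dm_\varphi}{dm}(x,s) \;=\; \frac{l(\varphi^{-1}(x),s)}{l(x,s)}\,J_\varphi(x).
$$
Substituting into the condition $\int_{\hat X}(\sqrt{dm_\varphi/dm}-1)^2\,dm<\infty$ from Theorem~\ref{rte6ui4i} gives directly the first form in \eqref{udyfhfioyhi08}. The equality with the second form is then the elementary algebraic identity $(\sqrt{A/B}-1)^2\,B = (\sqrt{A}-\sqrt{B})^2$ applied with $A=l(\varphi^{-1}(x),s)\,J_\varphi(x)$ and $B=l(x,s)$, after factoring the common $1/s$ out of the integrand.

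The main obstacle is the careful bookkeeping of the Jacobian factor in the pushforward computation: one has to evaluate it at the correct point and avoid confusing $J_\varphi(x)$ with $J_\varphi$ evaluated at $\varphi^{-1}(x)$. A convenient sanity check is to perform the change of variables $y=\varphi^{-1}(x)$ in the final integral: this transforms the first expression in \eqref{udyfhfioyhi08} into the second one, confirming both that the two displayed integrals are genuinely equal and that the Jacobian has been tracked correctly throughout.
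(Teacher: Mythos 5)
Your proposal is correct and follows essentially the same route as the paper: apply Theorem~\ref{rte6ui4i}, compute $\frac{dm_\varphi}{dm}(x,s)=\frac{l(\varphi^{-1}(x),s)}{l(x,s)}J_\varphi(x)$ by change of variables, and substitute into the Hellinger condition. Your explicit remark that the strict positivity of $l$ and of $J_\varphi$ makes the equivalence of $m$ and $m_\varphi$ automatic (so that only the integrability condition remains to be checked) is a small point the paper leaves implicit, but it is not a different method.
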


\begin{proof}
	By the definition of $m_\varphi$, for each $\Delta\in\mathcal B(\hat X)$,
	\begin{align*}
	m_\varphi(\Delta)&
	=\int_X \chi_\Delta(\varphi(x),s)\,dm(x,s)\\
	&=\int_X \chi_\Delta(\varphi(x),s)\,\frac{l(x,s)}s\, dx\,ds\\
	&=\int_X \chi_\Delta(\varphi(x),s)\,\frac{l(\varphi^{-1}(\varphi(x))),s)}s\, dx\,ds\\
&=\int_X \chi_\Delta(x,s)\,\frac{l(\varphi^{-1}(x),s)}s\, J_\varphi(x)\,dx\,ds\\
&=\int_\Delta \frac{l(\varphi^{-1}(x),s)}{l(x,s)}\,	J_\varphi(x)\,dm(x,s).
	\end{align*}
	
		 Therefore, we have the Radon--Nikodym derivative
	\begin{equation}\label{myhnjfuj1}
	\frac {dm_\varphi}{dm}(x,s)=	
	\frac {l(\varphi^{-1}(x)s)}{l(x,s)}\,J_{\varphi}(x).\end{equation}
	 Therefore, the second condition in Theorem~\ref{rte6ui4i} becomes \eqref{udyfhfioyhi08}.
	 \end{proof}

The following result was shown in \cite{ba1}.

\begin{corollary}
	\label{ujujbhuyj7u7}
	Let $m$ be a measure on $X\times\mathbb{R}_+$ of the form
	$$dm(x,s)=dx\,d\lambda(s),$$
	where $\lambda$ is a measure on $\mathbb{R}_+$. 
	Further assume that
	$$\int_{\R_+}\min\{1,s\}\,d\lambda(s)<\infty.$$
	Then $\mu_m$ is quasi-invariant with respect to the action of $\Dif_0(X)$ if and only if $\lambda(\mathbb{R}_+)<\infty$.
\end{corollary}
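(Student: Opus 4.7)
The plan is to invoke Theorem~\ref{rte6ui4i} and exploit the product structure of $m$. For $\varphi \in \Dif_0(X)$, I would first compute the pushforward $m_\varphi$ of $m = dx\otimes\lambda$ under the map $(x,s) \mapsto (\varphi(x), s)$. Since this map only touches the $x$-variable, $m_\varphi$ is again a product measure whose first marginal is the pushforward of Lebesgue measure under $\varphi$, equal to $J_\varphi(x)\,dx$, and whose second marginal is still $\lambda$. Because $\varphi$ is a smooth diffeomorphism, $J_\varphi(x) \in (0,\infty)$ for every $x \in X$, so $m_\varphi$ and $m$ are automatically mutually absolutely continuous, with
$$\frac{dm_\varphi}{dm}(x,s) = J_\varphi(x),$$
independent of $s$. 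This handles the first condition of Theorem~\ref{rte6ui4i}.

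Next I would factor the Hellinger-type integral using Fubini:
$$\int_{\hat X} \left(\sqrt{\frac{dm_\varphi}{dm}(x,s)} - 1\right)^{\!2} dm(x,s) = \lambda(\mathbb{R}_+) \int_X \bigl(\sqrt{J_\varphi(x)} - 1\bigr)^2 dx.$$
The $x$-integral is always finite: its integrand is supported on the compact set where $\varphi \ne \id$, on which $J_\varphi$ is continuous and bounded both above and below away from $0$.

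From this factorisation the two implications fall out. If $\lambda(\mathbb{R}_+) < \infty$, the product is finite for every $\varphi \in \Dif_0(X)$, so Theorem~\ref{rte6ui4i} gives quasi-invariance. Conversely, if $\lambda(\mathbb{R}_+) = \infty$, I must exhibit at least one $\varphi \in \Dif_0(X)$ that is not volume-preserving; any smooth radial diffeomorphism of a ball that uniformly rescales a concentric smaller ball by a factor $\ne 1$ and is the identity outside works, and for such a $\varphi$ the $x$-integral is strictly positive, forcing the full integral to be $+\infty$ and ruling out quasi-invariance. The argument is essentially a direct application of Theorem~\ref{rte6ui4i}; the only step that needs a line of care is confirming that $m_\varphi$ retains the product form, which I expect to be the sole (minor) bookkeeping obstacle.
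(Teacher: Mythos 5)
Your proposal is correct and follows essentially the same route as the paper: apply Theorem~\ref{rte6ui4i}, compute $\frac{dm_\varphi}{dm}(x,s)=J_\varphi(x)$, and factor the Hellinger integral as $\lambda(\mathbb R_+)\int_X(\sqrt{J_\varphi(x)}-1)^2\,dx$ with the $x$-integral finite by compact support. Your extra step of exhibiting a non-volume-preserving $\varphi$ for the converse (so that the $x$-integral is strictly positive) is a point the paper glosses over, and is a worthwhile addition.
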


\begin{proof} Note that \eqref{tyr75ire} and \eqref{cmpl4.1}
are satisfied. In this case,
	$$\frac{dm_\varphi}{dm}(x,s)=J_{\varphi}(x).$$
	Hence,
	\begin{equation}
	\label{klingdjhufh0}
	\int_{\hat X}\left(\sqrt{\frac {dm_\varphi}{dm}}-1\right)^2dm=\int_X\left(\sqrt{J_{\varphi}(x)}-1\right)^2dx\,\lambda(\mathbb{R}_+).
	\end{equation}
	Since the function $\displaystyle\left(\sqrt{J_{\varphi}(x)}-1\right)^2$ is smooth and has compact support in $X$, we have
	$$\int_X\left(\sqrt{J_\varphi(x)}-1\right)^2<\infty.$$
	Hence \eqref{klingdjhufh0} is finite, if and only if, $\lambda(\mathbb{R}_+)<\infty$.
\end{proof}

The following result generalizes Corollary~\ref{ujujbhuyj7u7}.

\begin{corollary}
	\label{bgujuikulp08mj} Let $m$ be a measure on $X\times\mathbb{R}_+$ which satisfies \eqref{tyr75ire}.
	Assume that, for each $\varphi \in \Dif_0(X)$, the measures $m$ and $m_\varphi$ are equivalent. Further assume that
	\begin{equation}
	\label{gvhdscosfyy}
	m(\Lambda\times \mathbb R_+)<\infty,\quad \Lambda\in\mathcal B_0(X).
	\end{equation}
 Then $\mu_m$  is quasi-invariant with respect to the action of $\Dif_0(X)$ and for each $\varphi\in\Dif_0(X)$, the corresponding Radon--Nikodym density is given by
	 \begin{equation}
	 \label{dvuhfdfuh}
	 \frac {d\mu_m^\varphi}{d\mu_m}(\eta)=\prod_{x\in\tau (\eta)}\frac {dm_\varphi}{dm}(x,s_x).
	 \end{equation}
\end{corollary}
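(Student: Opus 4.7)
My plan is to apply Theorem~\ref{rte6ui4i} for the quasi-invariance claim and then Theorem~\ref{thrma} to identify the Radon--Nikodym density explicitly. The key geometric observation is this: for $\varphi\in\Dif_0(X)$, fix a compact set $\Lambda\subset X$ containing $\{x\in X:\varphi(x)\ne x\}$. Because $\varphi$ is a bijection of $X$ acting as the identity on $\Lambda^c$, we have $\varphi(\Lambda^c)=\Lambda^c$, and hence $\varphi(\Lambda)=\Lambda$. Consequently the extended diffeomorphism $(x,s)\mapsto(\varphi(x),s)$ of $\hat X$ leaves $\Lambda\times\R_+$ invariant, so $m_\varphi$ agrees with $m$ off $\Lambda\times\R_+$ and $m_\varphi(\Lambda\times\R_+)=m(\varphi^{-1}(\Lambda)\times\R_+)=m(\Lambda\times\R_+)<\infty$. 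In particular the density $\phi:=dm_\varphi/dm$ (well-defined by the equivalence hypothesis) satisfies $\phi\equiv 1$ outside $\Lambda\times\R_+$.

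First I would establish quasi-invariance by verifying the square-root condition in Theorem~\ref{rte6ui4i}. Since $\phi\equiv 1$ off $\Lambda\times\R_+$, the integrand $(\sqrt{\phi}-1)^2$ vanishes there, and using the elementary bound $(\sqrt{a}-1)^2\le a+1$ for $a\ge 0$, one obtains
$$\int_{\hat X}\left(\sqrt{\phi}-1\right)^2 dm\;\le\;\int_{\Lambda\times\R_+}(\phi+1)\,dm\;=\;m_\varphi(\Lambda\times\R_+)+m(\Lambda\times\R_+)\;=\;2m(\Lambda\times\R_+)<\infty.$$
Hence $\mu_m$ is quasi-invariant under the action of $\Dif_0(X)$.

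For the explicit density, I would work on $\Gamma(\hat X)$ via the identification $\mu_m=\mathcal R_*\pi_m$ and $\mu_m^\varphi=\mathcal R_*\pi_{m_\varphi}$ established in the proof of Theorem~\ref{rte6ui4i}. To apply Theorem~\ref{thrma} I need $\phi-1\in L^1(\hat X,m)$, which holds since $\phi-1$ is supported in $\Lambda\times\R_+$ and $\int|\phi-1|\,dm\le 2m(\Lambda\times\R_+)<\infty$. Theorem~\ref{thrma} then gives
$$\frac{d\pi_{m_\varphi}}{d\pi_m}(\gamma)=\exp\left[\langle\gamma,\log\phi\rangle+\int_{\hat X}(1-\phi)\,dm\right].$$
The additive constant vanishes, because $\int_{\hat X}(1-\phi)\,dm=m(\Lambda\times\R_+)-m_\varphi(\Lambda\times\R_+)=0$ by the invariance $\varphi(\Lambda)=\Lambda$. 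Thus $d\pi_{m_\varphi}/d\pi_m=\exp[\langle\gamma,\log\phi\rangle]=\prod_{(x,s)\in\gamma}\phi(x,s)$ (a product with only finitely many nontrivial factors a.s., since $\phi\ne 1$ only on the finite-mass region $\Lambda\times\R_+$); under the bijection $\mathcal R$ this becomes $\prod_{x\in\tau(\eta)}\frac{dm_\varphi}{dm}(x,s_x)$, as claimed.

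The point that requires care is the vanishing of the correction $\int(1-\phi)\,dm$, which is what produces the clean multiplicative formula; this rests entirely on the compact-support structure of $\varphi$ (giving $\varphi(\Lambda)=\Lambda$) together with the Radon hypothesis $m(\Lambda\times\R_+)<\infty$. Everything else is a routine combination of the equivalence assumption with the two Poisson results recorded earlier.
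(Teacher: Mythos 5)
Your proof is correct and follows essentially the same route as the paper: both rest on the observation that $dm_\varphi/dm\equiv 1$ off $\Lambda\times\R_+$ together with $m_\varphi(\Lambda\times\R_+)=m(\Lambda\times\R_+)$, yielding the $L^1$ bound $2m(\Lambda\times\R_+)$ and the vanishing of $\int(1-\phi)\,dm$, after which Theorem~\ref{thrma} gives both equivalence and the product formula. Your separate verification of the square-root condition of Theorem~\ref{rte6ui4i} is harmless but redundant, since the $L^1$ condition you check afterwards for Theorem~\ref{thrma} already implies it (cf.\ Remark~\ref{prop1}).
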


\begin{proof} Note that \eqref{gvhdscosfyy} implies  \eqref{cmpl4.1}.
	According to Theorem~\ref{thrma}, to prove quasi-invariance, it suffices to prove that, for each $\varphi\in\Dif_0(X)$,
	$$ \int_{\hat X}\left|\frac {dm_\varphi}{dm}-1\right|dm<\infty.$$
	Choose $\Lambda\in\mathcal B_0(X)$ such that $\varphi(x)=x$ for all $x\in\Lambda^c$. Then
	$$\frac {dm_\varphi}{dm}(x,s)=1,\text{ for all }(x,s)\in\Lambda^c\times \mathbb R_+.$$
	Hence
	\begin{align*}
	\int_{\hat X}\left|\frac {dm_\varphi}{dm}-1\right|dm&=\int_{\Lambda\times\mathbb R_+}\left|\frac {dm_\varphi}{dm}-1\right|dm\\
	&\le\int_{\Lambda\times\mathbb R_+}\left(\frac {dm_\varphi}{dm}+1\right)dm\\
	&=m_\varphi(\Lambda\times \mathbb{R}_+)+m(\Lambda\times \mathbb{R}_+)\\
	&= 2m(\Lambda\times \mathbb{R}_+)<\infty.
	\end{align*}
	Here $\operatorname{id}$ denotes the identity map.

	Formula \eqref{dvuhfdfuh} will follow from formula \eqref{poi4} (see also Remark~\ref{tfye6e645}) if we show
	$$\int _{\hat X}\left(1-\frac{ dm_\varphi} {dm}\right)dm=0.$$
	Choose again $\Lambda\in\mathcal{B}_0(X)$ such that $\varphi$ is equal to the identity on $\Lambda^c$. Then, for any $(x,s)\in \Lambda^c\times \mathbb{R}_+$, we have
	$$\frac {dm_\varphi} {dm}(x,s)=1.$$
	Hence
	$$
	\int_{\hat X}\left(1-\frac {dm_\varphi}{dm}\right)dm=\int_{\Lambda\times \mathbb R_+}\left(1-\frac {dm_\varphi}{dm}\right)dm)=0.\qedhere
	$$
	\end{proof}


\begin{corollary}
\label{corolarymm}
Assume that the measure $m$ satisfies \eqref{Q1} with $l(x,s)>0$ for all $(x,s)\in X\times\R_+$ and  assume that \eqref{gvhdscosfyy} holds. Then $\mu_m$ is quasi-invariant with respect to the action of $\Dif_0(X)$ and for each $\varphi \in \Dif_0(X)$ we have
\begin{equation}
\label{jodwfdhupy865}
\frac {d\mu _m^\varphi}{d\mu _m}(\eta)=\prod_{x\in\tau(\eta)}\frac {l(\varphi^{-1}(x),s_x)}{l(x,s_x)}J_\varphi(x).
\end{equation}
\end{corollary}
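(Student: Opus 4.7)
The plan is to deduce this as a direct consequence of Corollary~\ref{bgujuikulp08mj}, after verifying its hypotheses and then substituting the explicit Radon--Nikodym density computed in Corollary~\ref{ttgvdcv8yrfhi}.

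First, I would verify that condition \eqref{tyr75ire} holds. Since $dm(x,s)=\frac{l(x,s)}{s}\,d\sigma(x)\,ds$ and $\sigma$ is nonatomic, we immediately get $m(\{x\}\times\R_+)=0$ for every $x\in X$. The assumption \eqref{gvhdscosfyy} is already given, so in particular \eqref{cmpl4.1} follows. Next, I would check that $m$ and $m_\varphi$ are equivalent for every $\varphi\in\Dif_0(X)$: from the computation in the proof of Corollary~\ref{ttgvdcv8yrfhi}, we know that
\begin{equation*}
\frac{dm_\varphi}{dm}(x,s)=\frac{l(\varphi^{-1}(x),s)}{l(x,s)}\,J_\varphi(x).
\end{equation*}
Since $l(x,s)>0$ everywhere on $X\times\R_+$ by hypothesis and $J_\varphi(x)>0$ everywhere (as $\varphi$ is a diffeomorphism), the density is strictly positive, so $m$ and $m_\varphi$ are equivalent.

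With these hypotheses checked, Corollary~\ref{bgujuikulp08mj} applies and gives both the quasi-invariance of $\mu_m$ under $\Dif_0(X)$ and the product formula
\begin{equation*}
\frac{d\mu_m^\varphi}{d\mu_m}(\eta)=\prod_{x\in\tau(\eta)}\frac{dm_\varphi}{dm}(x,s_x).
\end{equation*}
Substituting the explicit expression for $\frac{dm_\varphi}{dm}$ displayed above yields formula \eqref{jodwfdhupy865}.

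There is essentially no obstacle here: everything reduces to a bookkeeping check that the two assumptions in Corollary~\ref{bgujuikulp08mj} hold, with the strict positivity of $l$ being the only non-trivial input (it guarantees equivalence of $m$ and $m_\varphi$ rather than mere absolute continuity). The product over $\tau(\eta)$ is finite $\mu_m$-almost surely because $\varphi$ is the identity off a compact set $\Lambda$, so only the finitely many atoms of $\eta$ lying in $\Lambda$ contribute nontrivial factors.
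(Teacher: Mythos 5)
Your proposal is correct and follows essentially the same route as the paper, which deduces the corollary directly from Corollary~\ref{bgujuikulp08mj} combined with the density formula \eqref{myhnjfuj1}; you have merely spelled out the hypothesis checks (nonatomicity giving \eqref{tyr75ire}, strict positivity of $l$ giving equivalence of $m$ and $m_\varphi$) that the paper leaves implicit.
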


\begin{proof}
	Corollary~\ref{corolarymm} follows from Corollary~\ref{bgujuikulp08mj} and \eqref{myhnjfuj1}.
\end{proof}

\begin{corollary}\label{hjf76}
	Let the assumptions of  Corollary~\ref{ttgvdcv8yrfhi} be satisfied. Assume that there exists an open set $\Lambda\subset X$, $\Lambda\neq\emptyset$, such that, for all $x\in\Lambda$,
\begin{equation}\label{vghyft7}
\int_{\mathbb{R}_+}\frac {l(x,s)}{s}\,ds=\infty.\end{equation}
	Assume that, for each $x\in\Lambda$, the limit $\displaystyle\lim_{s\to 0}l(x,s)=:l(x,0)$ exists,
	$\displaystyle l(x,0)\neq0$, and the function $\Lambda\ni x\mapsto l(x,0)$ is continuous. Then the measure $\mu_m$ is not quasi-invariant with respect to the action of $\Dif_0(X)$.
	
\end{corollary}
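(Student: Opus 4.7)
The plan is to use Corollary~\ref{ttgvdcv8yrfhi} as a necessary condition and exhibit a single $\varphi\in\Dif_0(X)$ for which the integral in \eqref{udyfhfioyhi08} diverges; this will rule out quasi-invariance.

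First I would fix a point $x_0\in\Lambda$ and construct $\varphi\in\Dif_0(X)$ whose support lies in a small ball $B\subset\Lambda$ around $x_0$, arranged so that $\varphi(x_0)=x_0$ while $J_\varphi(x_0)\neq 1$. An explicit candidate is $\varphi(x)=x+\lambda\chi(x)(x-x_0)$ with $\chi$ a smooth cutoff equal to $1$ near $x_0$ and supported in $B$, and $\lambda\neq 0$ small enough to preserve the diffeomorphism property; then $D\varphi(x_0)=(1+\lambda)I$, so $J_\varphi(x_0)=(1+\lambda)^d\neq 1$.

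With this $\varphi$, the key pointwise observation is
\[
\sqrt{l(\varphi^{-1}(x_0),0)\,J_\varphi(x_0)}-\sqrt{l(x_0,0)}=\sqrt{l(x_0,0)}\bigl(\sqrt{J_\varphi(x_0)}-1\bigr)\neq 0,
\]
since $l(x_0,0)\neq 0$. By continuity of $x\mapsto l(x,0)$ on $\Lambda$, of $J_\varphi$, and of $\varphi^{-1}$, I would then obtain a neighbourhood $U\subset\Lambda$ of $x_0$ and a constant $c>0$ such that
\[
\bigl(\sqrt{l(\varphi^{-1}(x),0)\,J_\varphi(x)}-\sqrt{l(x,0)}\bigr)^2\geq c \qquad\text{for all }x\in U.
\]

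To conclude, for each fixed $x\in U$, the pointwise convergence $l(y,s)\to l(y,0)$ as $s\to 0^+$, applied at $y=x$ and at $y=\varphi^{-1}(x)$, provides some $\delta_x>0$ below which the corresponding squared difference with $s>0$ remains $\geq c/2$. Hence
\[
\int_{\mathbb R_+}\bigl(\sqrt{l(\varphi^{-1}(x),s)\,J_\varphi(x)}-\sqrt{l(x,s)}\bigr)^2\,\frac{ds}{s}\;\geq\;\int_0^{\delta_x}\frac{c/2}{s}\,ds=+\infty
\]
for every $x\in U$, so Tonelli's theorem forces the full double integral \eqref{udyfhfioyhi08} to be $+\infty$. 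The only mild obstacle is that $\delta_x$ is not uniform in $x$; I sidestep this by performing the $s$-integral first, which is exactly why the pointwise assumption $l(x,s)\to l(x,0)$ is already sufficient. Let me note that hypothesis \eqref{vghyft7} itself is not used directly in the argument, as it is automatic from $l(x,0)\neq 0$ together with the continuity at $s=0$.
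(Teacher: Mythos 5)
Your proof is correct, and it takes a genuinely more direct route than the paper's. The paper argues by contradiction: assuming quasi-invariance, finiteness of \eqref{udyfhfioyhi08} for every $\varphi$ supported in $\Lambda$, together with \eqref{vghyft7} and the convergence of the integrand as $s\to0$, forces the rigidity identity $l(\varphi^{-1}(x),0)J_\varphi(x)=l(x,0)$ for a.e.\ (hence, by continuity, every) $x\in\Lambda$ and every such $\varphi$; this identity is then shown to be untenable by comparing two diffeomorphisms with the same preimage of a point but different Jacobians --- which amounts to exactly the map you construct, one fixing $x_0$ with $J_\varphi(x_0)\neq1$. You instead run the argument forward: you exhibit that single dilation-type $\varphi$ and show directly that \eqref{udyfhfioyhi08} diverges, because on a neighbourhood of $x_0$ the squared difference $\bigl(\sqrt{l(\varphi^{-1}(x),s)J_\varphi(x)}-\sqrt{l(x,s)}\bigr)^2$ stays bounded below for small $s$ while $\int_0^\delta s^{-1}\,ds=\infty$; Tonelli then handles the non-uniformity of $\delta_x$, exactly as you say. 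The essential ingredients (the limit at $s=0$, continuity to spread the obstruction over a set of positive Lebesgue measure, the point-fixing diffeomorphism with non-unit Jacobian) coincide, but your version buys two things: it avoids the slightly delicate ``for a.e.\ $x$, hence for all $x$'' passage and the final two-diffeomorphism comparison, and it makes visible that hypothesis \eqref{vghyft7} is redundant --- since $l(x,0)>0$, the divergence already comes from the explicit factor $1/s$ in the second form of \eqref{udyfhfioyhi08} (and indeed \eqref{vghyft7} follows automatically from $l(x,s)\to l(x,0)>0$). The only point worth spelling out is that $U$ should be taken inside the ball $B$ so that $\varphi^{-1}(x)\in\Lambda$ for all $x\in U$, ensuring the limit $l(\varphi^{-1}(x),0)$ is defined; this is immediate from your construction.
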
	
	
\begin{proof} Without loss of generality, we may assume that the set $\Lambda$ is bounded.
	Assume that $\mu_m$ is quasi-invariant with respect to $\Dif_0(X)$. Then by Corollary~\ref{ttgvdcv8yrfhi}, for each diffeomorphism $\varphi$ with support in $\Lambda$, we have
	\begin{equation*}
	\int_\Lambda\int_{\mathbb {R}_+}\left(\sqrt{\frac {l(\varphi^{-1}(x),s)}{l(x,s)}J_{\varphi}(x)}-1\right)^2\,\frac {l(x,s)}s\,ds\,dx<\infty.
	\end{equation*}
	Hence, for a.a. $x\in\Lambda$,
	\begin{equation}
	\label{yhhubojfjjkkk}
	\int_{\mathbb R_+}\left(\sqrt{\frac {l(\varphi^{-1}(x),s)}{l(x,s)}J_{\varphi}(x)}-1\right)^2\,\frac {l(x,s)}s\,ds<\infty.
	\end{equation}
	Note that, for each $x\in \Lambda$,
	\begin{equation}\label{hdncgbjkjdhujkiosj}
	\lim_{s\to 0}\left(\sqrt{\frac {l(\varphi^{-1}(x),s)}{l(x,s)}J_{\varphi}(x)}-1\right)^2=\left(\sqrt{\frac {l(\varphi^{-1}(x),0)}{l(x,0)}J_{\varphi}(x)}-1\right)^2.
	\end{equation}
	By \eqref{vghyft7}, \eqref{yhhubojfjjkkk} and \eqref{hdncgbjkjdhujkiosj}, for a.a. $x\in \Lambda$,
	$$\frac {l(\varphi^{-1}(x),0)}{l(x,0)}J_{\varphi}(x)=1,$$
	or equivalently, for a.a. $x\in \Lambda$,
		\begin{equation}
		\label{dtvhxlxccl}
		l(\varphi^{-1}(x),0)=\frac {l(x,0)}{J_{\varphi}(x)}.
		\end{equation}
		By the continuity of the function $l(\cdot,0)$, we get that equality \eqref{dtvhxlxccl} holds, in fact, for all $x\in\Lambda$ and all diffeomorphisms $\varphi\in\Dif_0(X)$ with support in $\Lambda$. 
		
		But equality \eqref{dtvhxlxccl} is impossible. Just choose any $x,y\in\Lambda$ and any diffeomorphisms $\varphi,\psi\in\Dif_0(X)$ with support in $\Lambda$ such that, for some $x\in\Lambda$,
		$\varphi^{-1}(x)=\psi^{-1}(x)=y$
		and $J_{\varphi}(x)\neq J_\psi(x)$.
		Then
		$$l(y,0)=\frac {l(x,0)}{J_{\varphi}(x)}\neq \frac {l(x,0)}{J_\psi(x)}=l(y,0),$$
		which is a contradiction.
	\end{proof}

\begin{corollary}\label{vgyr75i} Let the assumptions of  Corollary~\ref{ttgvdcv8yrfhi} be satisfied. Assume that there exists an open set $\Lambda\subset X$, $\Lambda\neq\emptyset$, such that, for all $x\in\Lambda$,
	$$\int_{\mathbb{R}_+}\frac {l(x,s)}{s}\,ds=\infty.$$
Assume that there exists  a diffeomorphism $\varphi\in\Dif_0(X)$ such that, for all $x\in\Lambda$, we have
$$\lim_{s\to0}\frac {l(\varphi^{-1}(x),s)}{l(x,s)}J_{\varphi}(x)\ne 1.$$
Then the measure $\mu_m$ is not quasi-invariant with respect to the action of $\Dif_0(X)$.
\end{corollary}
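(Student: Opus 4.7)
The plan is to argue by contradiction, in close parallel with the proof of Corollary~\ref{hjf76}, but using the specific diffeomorphism $\varphi$ rather than exploiting the freedom to vary $\varphi$. Assume that $\mu_m$ is quasi-invariant under $\Dif_0(X)$. Applying Corollary~\ref{ttgvdcv8yrfhi} to this particular $\varphi$, the right-hand side of \eqref{udyfhfioyhi08} is finite. By Fubini's theorem, for Lebesgue-a.e.\ $x\in\Lambda$,
$$
\int_{\mathbb{R}_+}\Bigl(\sqrt{L(x,s)}-1\Bigr)^{2}\,\frac{l(x,s)}{s}\,ds<\infty,
$$
where I abbreviate $L(x,s):=\dfrac{l(\varphi^{-1}(x),s)}{l(x,s)}\,J_{\varphi}(x)$.

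Next I would exploit the pointwise hypothesis $\lim_{s\to 0}L(x,s)\ne 1$. Whether this limit is a finite value different from $1$, or $0$, or $+\infty$, in all cases $\bigl(\sqrt{L(x,s)}-1\bigr)^{2}$ tends to a strictly positive limit (possibly $+\infty$) as $s\to 0$. Hence, for each $x\in\Lambda$ there exist $\epsilon(x)\in(0,1]$ and $\delta(x)>0$ such that $\bigl(\sqrt{L(x,s)}-1\bigr)^{2}\ge\delta(x)$ for all $s\in(0,\epsilon(x))$. Substituting into the previous display gives
$$
\int_{0}^{\epsilon(x)}\frac{l(x,s)}{s}\,ds<\infty\quad\text{for a.e. } x\in\Lambda.
$$

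The final step would be to show that the divergence forced by the hypothesis $\int_{\R_+}l(x,s)/s\,ds=\infty$ must in fact occur near $s=0$, which then contradicts the bound just obtained. From condition \eqref{cmpl4.1}, written in the form of \eqref{ftr756}--\eqref{fty6e654}, and Fubini's theorem applied on any relatively compact subset of $\Lambda$, for a.e.\ $x$ one has $\int_{0}^{1}l(x,s)\,ds<\infty$ and $\int_{1}^{\infty}l(x,s)/s\,ds<\infty$. On $[\epsilon(x),1]$ the factor $1/s$ is bounded by $1/\epsilon(x)$, so
$$
\int_{\epsilon(x)}^{\infty}\frac{l(x,s)}{s}\,ds\le\frac{1}{\epsilon(x)}\int_{\epsilon(x)}^{1}l(x,s)\,ds+\int_{1}^{\infty}\frac{l(x,s)}{s}\,ds<\infty.
$$
Combining this with $\int_{\R_+}l(x,s)/s\,ds=\infty$ yields $\int_{0}^{\epsilon(x)}l(x,s)/s\,ds=\infty$ for a.e.\ $x\in\Lambda$, contradicting the previous step.

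The main obstacle is the last bookkeeping step: one has to verify that the divergence of $\int_{\R_+}l(x,s)/s\,ds$ is actually localized near zero (so that the lower bound $(\sqrt{L(x,s)}-1)^{2}\ge\delta(x)$ on $(0,\epsilon(x))$ is strong enough to produce a contradiction). The pointwise-in-$x$ extraction of the tail estimate from the integrated condition \eqref{cmpl4.1} via Fubini, and the measurability of $\epsilon(x)$ and $\delta(x)$ (which can be handled by taking them as a countable selection over a nested family of sublevel sets), are where care is required; the rest is straightforward.
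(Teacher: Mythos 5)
Your argument is correct and is essentially the paper's own proof: the paper simply asserts that for this $\varphi$ the integral in \eqref{udyfhfioyhi08} is infinite and invokes Corollary~\ref{ttgvdcv8yrfhi}, while you supply the omitted details (localizing the divergence of $\int_{\R_+}l(x,s)s^{-1}\,ds$ near $s=0$ via \eqref{cmpl4.1} and using that $(\sqrt{L(x,s)}-1)^2$ stays bounded away from $0$ on some $(0,\epsilon(x))$). Your closing worry about measurability of $\epsilon(x),\delta(x)$ is moot, since the contradiction only needs a single $x\in\Lambda$ where the two a.e.\ statements collide, and $\Lambda$ has positive Lebesgue measure.
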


\begin{proof}It immediately follows from the assumptions of the corollary that, for this  diffeomorphism $\varphi\in\Dif_0(X)$, we get
$$\int_{\hat X}\left(\sqrt{\frac{l(\varphi^{-1}(x),s)}{l(x,s)}\,J_{\varphi}(x)}-1\right)^2\frac{l(x,s)}{s}\,dx\,ds=\infty.$$
Hence, the condition of Corollary~\ref{hjf76} is not satisfied and the measure $\mu_m$ is not quasi-invariant with respect to the action of $\Dif_0(X)$.
\end{proof}

\subsection{Examples}

\subsubsection{Completely random gamma measures}\label{gufy576ier}
 Just as in subsec.~\ref{tyd576e}, consider the measure $m$ with
$$l(x,s)=\beta(x)e^{-\frac{s}{\alpha(x)}},$$
where $\alpha,\beta:X\to\R_+$. Assume that the function $\beta$ is continuous and $\alpha\in L^1_{\mathrm{loc}}(X)$. This, in particular implies that $\alpha\beta\in L^1_{\mathrm{loc}}(X)$, hence the condition of Lemma~\ref{lmkuy} is satisfied.

Condition \eqref{vghyft7} is evidently satisfied for each $x\in X$. 
 We also evidently have
$$l(x,0)=\lim_{s\to0} \beta(x)e^{-\frac{s}{\alpha(x)}}=\beta(x).$$
Hence, the conditions of Corollary~\ref{hjf76} are satisfied and the measure $\mu_m$ is not quasi-invariant with respect to the action of $\Dif_0(X)$.

\subsubsection{Completely random measures with a L\'evy measure of logarithmic type near zero}\label{utr7rr67}

We consider two cases.

{\it Case 1.} Let $l(x,s)$ be given by formula \eqref{mmmmuyt65} with $Y=X$ and $\alpha$, $\beta$ being continuous functions. Since $\beta\in L^1_{\mathrm{loc}}(X,dx)$, the condition of Lemma \ref{mkuyt1} is satisfied.

Let us also assume that $\alpha(x)\le 1$. By Proposition~\ref{prop2}, we then get that equation \eqref{vghyft7} holds for all $x\in X$.

Let us assume that the function $\alpha$ is not constant. We get
\begin{equation}\label{vytf7}
\lim_{s\to0}\frac {l(\varphi^{-1}(x),s)}{l(x,s)}J_{\varphi}(x)
=J_{\varphi}(x)\,\frac{\beta(\varphi^{-1}(x))}{\beta(x)}\lim_{s\to0}(-\log s)^{\alpha(x)-\alpha(\varphi^{-1}(x))}.
\end{equation}
Choose an open set $\Lambda\subset X$ and a diffemorphism $\varphi\in\Dif_0(X)$ so that, for all $x\in\Lambda$,
$$\alpha(x)>\alpha(\varphi^{-1}(x)).$$
Hence,
$$\lim_{s\to0}\frac {l(\varphi^{-1}(x),s)}{l(x,s)}J_{\varphi}(x)=+\infty.$$
Hence, the condition of Corollary~\ref{vgyr75i} is satisfied and the measure $\mu_m$ is not quasi-invariant with respect to the action of $\Dif_0(X)$.

If the function $\alpha$ is constant, then evidently formula \eqref{vytf7} becomes
$$\lim_{s\to0}\frac {l(\varphi^{-1}(x),s)}{l(x,s)}J_{\varphi}(x)
=J_{\varphi}(x)\,\frac{\beta(\varphi^{-1}(x))}{\beta(x)}.$$
By Corollary~\ref{vgyr75i}, we will conclude that  the measure $\mu_m$ is not quasi-invariant with respect to the action of $\Dif_0(X)$ if we show that 
there exist $\varphi\in\Dif_0(X)$ and an open non-empty set $\Lambda$ such that, for all $x\in\Lambda$,
\begin{equation}\label{tre643}
J_\varphi(x)-\frac{\beta(x)}{\beta(\varphi^{-1}(x))}\ne0.\end{equation}
Since $J_\varphi$ and $\frac{\beta}{\beta\circ\varphi^{-1}}$ are continuous functions, this will follow from the statement that there exist $\varphi\in\Dif_0(X)$ and $x\in X$ such that \eqref{tre643} holds.
But for this, we can easily construct a diffeomorphism $\varphi\in\Dif_0(X)$ such that $\varphi(x)=x$ but $J_\varphi(x)\ne1$.

{\it Case 2.}  Let $l(x,s)$ be given by formula \eqref{mmmmuyt65} with $Y=X$. Assume that the conditions of Proposition~\ref{prop2}, (i) are satisfied. 
In particular $\alpha(x)>1$ for all $x\in X$. Then, by  Proposition~\ref{prop2}, (i) and Corollary \ref{bgujuikulp08mj}, $\mu_m$ is quasi-invariant with respect to the action of $\Dif_0(X)$.

\subsubsection{Completely random measures with a L\'evy measure of power type near zero}\label{yrtetw34}

Let $l(x,s)$ be as in subsec.~\ref{vuyfr576re7i} with $Y=X$. If $\beta\in L^1_{\mathrm{loc}}(X,dx)$ and $\frac{\beta}{1-\alpha}\in L^1_{\mathrm{loc}}(X,dx)$. Then, by Proposition \ref{nbuiytr67o5} and Corollary \ref{bgujuikulp08mj}, $\mu_m$ is quasi-invariant with respect to the action of $\Dif_0(X)$.

\section{Quasi-invariance and partial quasi-invariance with respect to the semidirect product}\label{ydr637}

In this section, we will study quasi-invariance of $\mu_m$ with respect to the semidirect product of the groups $C_0(X\to\R_+)$ and $\Dif_0(X)$. 

\subsection{Quasi-invariance with respect to the semidirect product}
We recall that an {\it automorphism} $\alpha$ of a group $(G,\cdot)$ is a bijective mapping $\alpha:G\to G$ such that, for any $g_1,g_2\in G$, we have $\alpha(g_1\cdot g_2)=\alpha(g_1)\cdot\alpha(g_2)$.

Following \cite{ba1}, we  define the semidirect product of $\Dif_0(X)$ and $C_0(X\to\mathbb R_+)$. The group $\Dif_0(X)$ acts on $C_0(X\to\mathbb R_+)$ by automorphisms. More precisely, for each 
$\varphi \in\Dif_0(X)$, we may define an automorphism of $C_0(X\to\mathbb R_+$
by
$$C_0(X\to\mathbb R_+)\ni\theta\mapsto \alpha(\varphi)\theta=\theta \circ \varphi^{-1}\in C_0(X\to\mathbb R_+).$$

Let $\mathfrak G$ be the Cartesian product of $\Dif _0(X)$ and $C_0(X\to \mathbb R_+)$:
        $$\mathfrak G=\Dif_0(X)\times C_0(X\to \mathbb R_+).$$
  We define a group multiplication on $\mathfrak G$ as follows:
   for any $g_1=(\varphi_1,\theta _1)$,  $g_2=(\varphi_2,\theta_2)\,\in \mathfrak G,$
   we set
   \begin{equation*}
   g_1g_2=(\varphi_1\circ \varphi_2,\theta _1(\theta _2\circ \varphi_1^{-1})).
   \end{equation*}
   Then $\mathfrak G$ becomes a group. One denotes this group by
   $$\mathfrak G=\Dif_0(X)\leftthreetimes C_0(X\to  \mathbb R_+)$$
   and one calls $\mathfrak G$ the {\it semidirect
   product of $\Dif_0(X)$ and $C_0(X\to \mathbb R_+)$ with respect to $\alpha$}.

   The group $\mathfrak G$ naturally acts on $\mathbb M(X)$, the space of Radon measures on $X$: for any $g=(\varphi,\theta)\in\mathfrak G$ and any $\eta\in \mathbb M(X)$, we define the Radon measure $g\eta $ by
   \begin{equation}\label{uytil8}
   d(g\eta)(x):=\theta(x)d(\varphi^*\eta)(x).
   \end{equation}
   Here $\varphi^*\eta$ is the push-forward of $\eta$ under $\varphi$. Note that when $g=(\varphi, \theta )$ acts on $\eta$, we first act on $\eta$ by $\varphi$, i.e., we take $\varphi^*\eta$, and then we act by $\theta$, i.e., we multiply the measure $\varphi^*\eta$ by $\theta$.
    Note that each $g\in\mathfrak G$ maps $\mathbb K(X)$ into $\mathbb K(X)$.

\begin{proposition}\label{hgifvjlfflv}
    Let $\mu$ be a measure on $\mathbb K(X)$ (or $\mathbb M(X)$). The measure $\mu$ is quasi-invariant with respect to $\mathfrak G$ if and only if $\mu$ is quasi-invariant with respect to the action of both groups $\Dif_0(X)$ and $C_0(X\to \mathbb R_+)$. In the latter case, we have, for each $g=(\varphi,\theta)\in\mathfrak G$,
    \begin{equation}\label{tyr7i54o}
    \frac {d\mu^\theta}{d\mu}(\eta)=\frac {d\mu^\varphi}{d\mu}(\theta^{-1}\eta)\frac{d\mu^\theta}{d\mu}(\eta).
    \end{equation}
\end{proposition}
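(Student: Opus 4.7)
The plan is to first translate the action of $g=(\varphi,\theta)\in\mathfrak G$ on $\eta\in\mathbb K(X)$ into a composition of the two ``pure'' actions: since $g\eta=\theta\cdot\varphi^*\eta$ by \eqref{uytil8}, writing $T_\varphi(\eta):=\varphi^*\eta$ and $T_\theta(\eta):=\theta\eta$ gives $g=T_\theta\circ T_\varphi$. Pushing forward $\mu$ under this composition yields the key identity
\[
\mu^g=(T_\theta)_*(T_\varphi)_*\mu=(\mu^\varphi)^\theta,
\]
which is the bridge between the bi-group statement and the single-group statements.

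For the forward direction, I would simply specialise: the pair $(\varphi,1)\in\mathfrak G$ acts on $\eta$ as $\varphi^*\eta$ (since $\theta\equiv1$), and $(\mathrm{id},\theta)\in\mathfrak G$ acts on $\eta$ as $\theta\eta$. Hence $\mathfrak G$-quasi-invariance of $\mu$, restricted to these two one-parameter families of elements, is precisely quasi-invariance under $\Dif_0(X)$ and under $C_0(X\to\mathbb R_+)$ respectively.

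For the reverse direction, assume both single-group quasi-invariances, so $\mu^\varphi\sim\mu$ and $\mu^\theta\sim\mu$. Using the identity $\mu^g=(\mu^\varphi)^\theta$, I would observe that pushforward preserves absolute continuity: if $\mu^\varphi\sim\mu$ then $(\mu^\varphi)^\theta\sim\mu^\theta$, and then $\mu^\theta\sim\mu$ gives $\mu^g\sim\mu$. To obtain the density, apply the chain rule:
\[
\frac{d\mu^g}{d\mu}(\eta)=\frac{d(\mu^\varphi)^\theta}{d\mu^\theta}(\eta)\,\frac{d\mu^\theta}{d\mu}(\eta).
\]
The first factor is identified by a direct change-of-variables: writing $\mu^\varphi=f\mu$ with $f=d\mu^\varphi/d\mu$, the pushforward under $T_\theta$ gives $(\mu^\varphi)^\theta=(f\circ\theta^{-1})\,\mu^\theta$, so $d(\mu^\varphi)^\theta/d\mu^\theta(\eta)=\frac{d\mu^\varphi}{d\mu}(\theta^{-1}\eta)$. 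Substituting yields the stated formula.

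The proof is essentially bookkeeping; there is no analytic obstacle once the identity $\mu^g=(\mu^\varphi)^\theta$ is in place. The only point requiring a small amount of care is verifying that the order of the actions, dictated by the semidirect-product multiplication rule $(\varphi_1,\theta_1)(\varphi_2,\theta_2)=(\varphi_1\circ\varphi_2,\theta_1(\theta_2\circ\varphi_1^{-1}))$, is consistent with the ``apply $\varphi$ first, then multiply by $\theta$'' reading of \eqref{uytil8}; this is what makes the composition $g=T_\theta\circ T_\varphi$ (rather than $T_\varphi\circ T_\theta$) the correct one and ultimately explains why the argument of $d\mu^\varphi/d\mu$ in the final formula is $\theta^{-1}\eta$.
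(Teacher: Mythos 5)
Your proposal is correct and follows essentially the same route as the paper: both rest on the identity $\mu^g=(\mu^\varphi)^\theta$ (apply $\varphi$ first, then $\theta$, as dictated by \eqref{uytil8}), obtain the forward direction by restricting to the subgroups $(\varphi,1)$ and $(\operatorname{id},\theta)$, and derive the density $\frac{d\mu^g}{d\mu}(\eta)=\frac{d\mu^\varphi}{d\mu}(\theta^{-1}\eta)\,\frac{d\mu^\theta}{d\mu}(\eta)$ by the same change of variables. The only cosmetic difference is that the paper carries out the computation on integrals of nonnegative test functions and checks positivity of the density explicitly (via the set $A=\theta B$), whereas you package the same steps as a Radon--Nikodym chain rule together with the observation that pushforward under the bimeasurable bijection $T_\theta$ preserves equivalence of measures.
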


\begin{proof}
	If $\mu$ is quasi-invariant with respect to $\mathfrak G$, then automatically it is quasi-invariant with respect to the action of $\Dif_0(X)$ and $C_0(X\to \mathbb R_+)$, since $\Dif_0(X)$ and $C_0(X\to \mathbb R_+)$ are subgroups of $\mathfrak G$. So assume that $\mu$ is quasi-invariant with respect to $\Dif_0(X)$ and $C_0(X\to\mathbb R_+)$ and let us prove  that $\mu$ is quasi-invariant with respect to $\mathfrak G$.
	
	Let $F:\mathbb M(X)\to [0,+\infty]$ be a measurable function. Let $g=(\varphi,\theta)\in\mathfrak G$. We have, by \eqref{uytil8},
	\begin{align}\nonumber\label{digg2}
	\int_{\mathbb M(X)} F(\eta)d\mu^g(\eta)&=\int_{\mathbb M(X)}F(g\eta)d\mu(\eta)\\
	&=\int_{\mathbb M(X)}F(\theta (\varphi^*\eta))d\mu(\eta)\notag\\
	&=\int_{\mathbb M(X)}F(\theta \eta)d\mu^\varphi(\eta).
	\end{align}
	Since $\mu$ is quasi-invariant with respect to the action of $\Dif_0(X)$, we continue \eqref{digg2} as follows:
	\begin{align*}
	&=\int _{\mathbb M(X)} F(\theta \eta)\,\frac {d\mu^{\varphi}}{d\mu}(\eta)\,d\mu(\eta)\\
	&=\int_{\mathbb M(X)}F(\eta)\frac {d\mu^ \varphi}{d\mu}(\theta ^{-1}\eta)d\mu^\theta(\eta).
	\end{align*}
	Since $\mu$ is quasi-invariant with respect to $C_0(X\to\mathbb R_+)$, we continue: 
		\begin{equation*}
	=\int_{\mathbb M(X)}F(\eta)\frac {d\mu^\varphi}{d\mu}(\theta ^{-1}\eta)\frac {d\mu^\theta}{d\mu}(\eta)d\mu(\eta).
	\end{equation*}
	
	The functions $\frac {d\mu^\theta}{d\mu}$ and $\frac{d\mu^\varphi}{d\mu}$ are strictly positive on $\mathbb M(X)$ $\mu$-almost everywhere. Let
	\begin{align*}
	&A:=\left\{\eta\in\mathbb M(X)\mid \frac{d\mu^\varphi}{d\mu}(\theta ^{-1}\eta)=0\right\},\\
	&B:=\left\{\eta \in\mathbb M(X) \mid \frac{d\mu^\varphi}{d\mu}(\eta)=0\right\}.
	\end{align*}
	As we already said $\mu(B)=0$. But $A=\theta B$. Hence $\mu(A)=0$ because of quasi-invariance with respect to $C_0(X\to\mathbb R_+)$. Thus 
	$$\frac {d\mu^\varphi}{d\mu}(\theta ^{-1}\eta)\frac {d\mu^\theta}{d\mu}(\eta)> 0\quad {\text{$\mu$-a.e.}}$$
	Hence, the probability measures $\mu^g$ and $\mu$ are equivalent and \eqref{tyr7i54o} holds.
\end{proof}

    \begin{theorem}
    	Let $m$ satisfy \eqref{Q1} with $l(x,s)>0$ for all $(x,s)\in\hat X$, \eqref{Q3}, and \eqref{gvhdscosfyy}. Then $\mu_m$ is quasi-invariant with respect to $g=(\varphi,\theta )\in\mathfrak G$ and the corresponding Radon-Nikodym derivative is given  by
    	\begin{multline*}
    	\frac {d\mu_m^g}{d\mu_m}(\eta)=\left(\prod_{x\in\tau(\eta)}\frac {l(\varphi^{-1}(x),\theta^{-1}(x)s_x)}{l(x,\theta ^{-1}(x)s_x)}J_\varphi(x)\right)
    	\\\times \exp\bigg[\int _X\log\left(\frac{l(x,\theta^{-1}(x)s_x)}{l(x,s_x)}\right)
    	    	s_x^{-1}d\eta(x)
    	\text{}+\int_X\int_{\mathbb{R}_+}\frac {\left(l(x,s)-l(x,\theta^{-1}(x)s)\right)} s \,ds\,dx\bigg].
    	\end{multline*}
    \end{theorem}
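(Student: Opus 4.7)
The strategy is to apply Proposition \ref{hgifvjlfflv}, which reduces the problem to establishing quasi-invariance of $\mu_m$ under each of the two subgroups $\Dif_0(X)$ and $C_0(X\to\R_+)$ separately, and then assembling the densities via formula \eqref{tyr7i54o} (whose left-hand side should of course read $\frac{d\mu^g}{d\mu}$).

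Quasi-invariance under $\Dif_0(X)$ is immediate from Corollary \ref{corolarymm}, whose hypotheses are precisely \eqref{Q1} with $l>0$ and \eqref{gvhdscosfyy}; this yields
$$\frac{d\mu_m^\varphi}{d\mu_m}(\eta)=\prod_{x\in\tau(\eta)}\frac{l(\varphi^{-1}(x),s_x)}{l(x,s_x)}\,J_\varphi(x).$$
For $C_0(X\to\R_+)$, since $l>0$ on all of $\hat X$ we have $Y=X$ in the notation of Section~\ref{cte65i}, so I would first verify the integrability condition \eqref{tur678} directly from \eqref{gvhdscosfyy}, bypassing \eqref{Q8}. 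Namely, if $\theta\in C_0(X\to\R_+)$ is supported in a compact set $C$, the integrand in \eqref{tur678} vanishes off $C\times\R_+$, and on $C\times\R_+$ the triangle inequality and the change of variable $s\mapsto\theta(x)s$ (for fixed $x$, with Jacobian $\theta(x)$) give
$$\int_C\!\!\int_{\R_+}\frac{|l(x,s)-l(x,\theta^{-1}(x)s)|}{s}\,ds\,dx\le 2\int_C\!\!\int_{\R_+}\frac{l(x,s)}{s}\,ds\,dx=2\,m(C\times\R_+)<\infty,$$
by \eqref{gvhdscosfyy}. Once \eqref{tur678} is available, Steps 2 and 3 of the proof of Theorem \ref{hout8r6ode} apply verbatim and produce the quasi-invariance together with the density \eqref{Q5} (with $Y=X$).

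To finish, I would plug into \eqref{tyr7i54o}. The crucial observation is that for $\theta\in C_0(X\to\R_+)$, the measure $\theta^{-1}\eta=\sum_{x\in\tau(\eta)}\theta^{-1}(x)s_x\delta_x$ has the same support $\tau(\eta)$ as $\eta$, only with each weight $s_x$ rescaled to $\theta^{-1}(x)s_x$. Substituting into the $\Dif_0(X)$ density above gives
$$\frac{d\mu_m^\varphi}{d\mu_m}(\theta^{-1}\eta)=\prod_{x\in\tau(\eta)}\frac{l(\varphi^{-1}(x),\theta^{-1}(x)s_x)}{l(x,\theta^{-1}(x)s_x)}\,J_\varphi(x),$$
and multiplying this by \eqref{Q5} produces exactly the claimed expression.

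The proof is essentially bookkeeping once the two one-group quasi-invariance statements are in hand. The only genuine technical point is the observation that the hypothesis \eqref{gvhdscosfyy} is strong enough to drive the $C_0(X\to\R_+)$-quasi-invariance argument of Theorem \ref{hout8r6ode} without needing the more delicate condition \eqref{Q8}; after that, the composition rule of Proposition \ref{hgifvjlfflv} and the elementary behaviour of $\theta^{-1}\eta$ on atoms finish the calculation.
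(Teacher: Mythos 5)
Your proof is correct and follows essentially the same route as the paper: quasi-invariance under $\Dif_0(X)$ via Corollary~\ref{corolarymm}, quasi-invariance under $C_0(X\to\R_+)$ via Theorem~\ref{hout8r6ode}, and assembly of the two densities through Proposition~\ref{hgifvjlfflv}. The only (harmless) deviation is that you verify the intermediate integrability condition \eqref{tur678} directly from \eqref{gvhdscosfyy} by the change of variables $s\mapsto\theta(x)s$, whereas the paper checks hypothesis \eqref{Q8} by invoking Corollary~\ref{cor2} with $l_1=0$ and $l_2=l$; both reductions rest on the same finiteness $m(C\times\R_+)<\infty$, so the arguments are equivalent.
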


    \begin{proof}
    		By Corollary~\ref{corolarymm}, $\mu_m$ is quasi-invariant with respect to $\Dif_0(X)$ and 
    		\begin{equation}
    		\label{fyguhfeijk}
    		\frac {d\mu_m^\varphi}{d\mu_m}(\eta)=\prod_{x\in\tau(\eta)}\frac {l(\varphi^{-1}(x),s_x)}{l(x,s_x)}J_\varphi(x).
    		\end{equation}
    	In Corollary~\ref{cor2}, we set $l_1(x,s)=0$ and $l_2(x,s)=l(x,s)$. Then \eqref{mmu1} and \eqref{mmu2} are satisfied and by  Theorem~\ref{hout8r6ode} and Corollary~\ref{cor2}, $\mu_m$ is quasi-invariant with respect to $C_0(X\to\mathbb R_+)$ and
    	\begin{multline}\label{jffhfhj}
    	\frac{d\mu_m^\theta}{d\mu _m}(\eta)=\exp\bigg[\int _X\log\left(\frac{l(x,\theta^{-1}(x)s_x)}{l(x,s_x)}\right)s_x^{-1}d\eta(x)
    	\\
    	\text{}+\int_X\int_{\mathbb{R}_+}\frac {\left(l(x,s)-l(x,\theta^{-1}(x)s)\right)} s \,ds\,dx\bigg].
    	\end{multline}
    Now the statement of the theorem follows from Proposition~\ref{hgifvjlfflv}, \eqref{fyguhfeijk} and \eqref{jffhfhj}.    	
    \end{proof}

\begin{example}
Let $l(x,s)$ be given by formula \eqref{mmmmuyt65} with $Y=X$. Assume that the conditions of Proposition~\ref{prop2}, (i) are satisfied. 
In particular $\alpha(x)>1$ for all $x\in X$. Then by Proposition \ref{ute64u38}  and subsec.~\ref{utr7rr67}, (ii), the measure $\mu_m$ is quasi-invariant with respect to the action of $\mathfrak G$.
\end{example}

\begin{example}
Let $l$ be as in subsec. \ref{yrtetw34}. Then it follows from Proposition~\ref{jkgiyl} subsec.~\ref{yrtetw34} that he measure $\mu_m$ is quasi-invariant with respect to the action of $\mathfrak G$.
\end{example}

\subsection{Partial quasi-invariance with respect to the semidirect product}The following definition is taken from \cite{ba1}.

Let $(\Omega,\mathcal F, P)$ be a probability space, and let $G$ be a group which acts on $\Omega$. We say that {\it the probability measure $P$ is partially quasi-invariant with respect to transformations $g\in  G$} if there exists a filtration $(\mathcal F_n )_{n=1}^\infty$  such that 
\begin{itemize}
	\item $\mathcal F$ is the minimal $\sigma$-algebra on $\Omega$ which contains all $\mathcal F_n$, $n\in\mathbb N$;
	\item For each $g\in G$ and $n\in\mathbb N$, there exists $k\in\mathbb N$ such that $g$ maps $\mathcal F_n$ into $\mathcal F_k$;
	\item For any $n\in\mathbb N$ and $g\in G$, there exists a measurable function $R^{(n)}_g:\Omega\to[0,+\infty]$ such that, for each $F:\Omega\to[0,\infty]$ which is $\mathcal F_n$-measurable,
	\begin{equation*}
	\int_\Omega F(\omega)dP^g(\omega)=\int_{\Omega}F(\omega)R_g^{(n)}(\omega)dP(\omega).
	\end{equation*}
	Here $P^g$ is the push-forward of $P$ under $g$.
\end{itemize}

\begin{remark}
If $P$ is quasi-invariant with respect to the action of $G$, then it is partially quasi-invariant. In this case, just choose $\mathcal F_n=\mathcal F$ and $R^{(n)}_g=\frac{dP^g}{dP}$.
\end{remark}

\begin{theorem}\label{yrde46}
	Assume that the conditions of Theorem~\ref{hout8r6ode} are satisfied. Assume that there exists $\Lambda\in\mathcal B_0(X)$ such that $m(\Lambda\times\mathbb R_+)=+\infty$. Then the measure $\mu_m$ is partially quasi-invariance with respect to the action of the group $\mathfrak G$.
\end{theorem}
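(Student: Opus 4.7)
The plan is to establish partial quasi-invariance by introducing a filtration that ``truncates'' the underlying Poisson configuration on $\hat X=X\times\R_+$ to a region of \emph{finite} $m$-mass. Fix an exhausting sequence $\Lambda_n\uparrow X$ of relatively compact open sets and a decreasing sequence $\epsilon_n\downarrow 0$ of positive reals, and set $D_n:=\Lambda_n\times[\epsilon_n,\infty)\subset\hat X$. By \eqref{gur7ir} each $D_n$ has finite $m$-mass, while $D_n\uparrow\hat X$. Let $\mathcal F_n$ be the sub-$\sigma$-algebra of $\mathcal B(\mathbb K(X))$ obtained as the pullback under $\mathcal R^{-1}$ of the $\sigma$-algebra on $\Gamma_{pf}(\hat X)$ generated by $\gamma\mapsto\gamma\cap D_n$; equivalently, $\mathcal F_n$ is generated by the counts $\eta\mapsto|\mathcal R^{-1}(\eta)\cap B|$ for $B\in\mathcal B_0(\hat X)$ with $B\subset D_n$.

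First I would verify the two ``filtration'' requirements. Since $D_n\uparrow\hat X$, standard generation theorems for $\mathcal B(\Gamma(\hat X))$ give $\bigvee_n\mathcal F_n=\mathcal B(\mathbb K(X))$. For the preservation under $\mathfrak G$: an element $g=(\varphi,\theta)\in\mathfrak G$ induces the map $T_g(x,s)=(\varphi(x),\theta(\varphi(x))s)$ on $\hat X$, which equals the identity outside some $\Lambda_K\times\R_+$. Because $\theta\in C_0(X\to\R_+)$ is pinched between two positive constants $\delta\le\theta\le 1/\delta$, one checks $T_g^{-1}(D_n)\subset\varphi^{-1}(\Lambda_n)\times[\delta\epsilon_n,\infty)\subset D_k$ whenever $\Lambda_k\supset\Lambda_n\cup\varphi^{-1}(\Lambda_n)\cup\Lambda_K$ and $\epsilon_k\le\delta\epsilon_n$. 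Any $\mathcal F_n$-measurable $F$ factors as $F(\eta)=\tilde F(\mathcal R^{-1}(\eta)\cap D_n)$, and since $\mathcal R^{-1}(g\eta)=T_g(\mathcal R^{-1}(\eta))$, the composition $F\circ g$ depends only on $\mathcal R^{-1}(\eta)\cap D_k$, hence is $\mathcal F_k$-measurable.

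The core step is then to exhibit, for each $n$ and $g$, a function $R_g^{(n)}$ with $\int F\,d\mu_m^g=\int F\,R_g^{(n)}\,d\mu_m$ for every non-negative $\mathcal F_n$-measurable $F$. For such $F=\tilde F(\cdot\cap D_n)$, the construction of $\mu_m$ from $\pi_m$ (Corollary~\ref{mygfrcor45}), together with the fact that $T_g$ commutes with set operations, yields
\begin{equation*}
\int F\,d\mu_m=\int_{\Gamma(D_n)}\tilde F\,d\pi_{m_n},\qquad \int F\,d\mu_m^g=\int_{\Gamma(D_n)}\tilde F\,d\pi_{\nu_n},
\end{equation*}
where $m_n$ is the restriction of $m$ to $D_n$, and $\nu_n$ is the pushforward under $T_g$ of the restriction of $m$ to $T_g^{-1}(D_n)$; both are \emph{finite} measures on $D_n$. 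A short change-of-variables calculation, together with $l>0$ on $Y\times\R_+$ (taking $Y=X$ as the natural setting) and $\theta,J_\varphi>0$, shows that $m_n$ and $\nu_n$ are both absolutely continuous with respect to $dx\,ds$ on $D_n$ with strictly positive densities, hence equivalent. For equivalent \emph{finite} intensity measures the Hellinger integral $\int_{D_n}(\sqrt{d\nu_n/dm_n}-1)^2\,dm_n$ is automatically finite by Cauchy--Schwarz, so Theorem~\ref{thrma} produces $d\pi_{\nu_n}/d\pi_{m_n}$ as a bona fide density on $\Gamma(D_n)$. Pulling this density back through $\mathcal R$ supplies the required $R_g^{(n)}$.

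The hardest part is really \emph{choosing} the filtration. The naive candidate $\mathcal F_n=\sigma(\eta|_{\Lambda_n})$ breaks down in the regime $m(\Lambda_n\times\R_+)=\infty$ assumed in the theorem: the restriction $\eta|_{\Lambda_n}$ still carries infinitely many atoms, and the would-be density carried over from Corollary~\ref{corolarymm} is an infinite product of factors of type $l(\varphi^{-1}(x),\theta^{-1}(x)s_x)/l(x,s_x)\cdot J_\varphi(x)$ which in general does not converge (for the completely random gamma measure the factor tends to $J_\varphi(x)\ne 1$ as $s_x\to 0$). The extra truncation $s\ge\epsilon_n$ retains only finitely many atoms in $D_n$, which is precisely what turns the problem into an equivalence of two finite-intensity Poisson measures and makes the Hellinger obstruction vanish.
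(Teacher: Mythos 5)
Your proof is correct in substance and shares the paper's central idea --- truncate the underlying Poisson configuration on $\hat X$ away from $s=0$ so that only a finite-intensity region remains --- but it executes this idea along a genuinely different route. The paper's filtration $\mathcal B_n(\mathbb K(X))$ truncates only in the $s$-variable (it is generated by $\gamma\cap\left(X\times\left[\frac 1n,\infty\right)\right)$ over \emph{all} of $X$), and its density $R_g^{(n)}$ is assembled in two stages: the current-group part $\theta$ is handled by the \emph{full} quasi-invariance guaranteed by the hypotheses of Theorem~\ref{hout8r6ode}, while the diffeomorphism part is handled by Corollary~\ref{corolarymm} applied to the restricted measure $m(k)$ on $X\times\left[\frac 1k,\infty\right)$, which is locally finite. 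You instead truncate in both variables, $D_n=\Lambda_n\times[\epsilon_n,\infty)$, and treat the whole group element $g=(\varphi,\theta)$ at once as a single change of intensity between two \emph{finite} Poisson measures on $D_n$; equivalence plus finiteness of the intensities then gives the density essentially for free. Your route is more self-contained and in fact never uses hypothesis \eqref{Q8}, so it establishes the conclusion under weaker assumptions (essentially \eqref{Q1}--\eqref{Q3} with $l>0$); the price is a smaller $\sigma$-algebra at each stage of the filtration and a less explicit formula for $R_g^{(n)}$, whereas the paper's two-stage factorization yields the closed-form expression it records. Three small points to tidy up: (i) like the paper's own proof (which invokes Corollary~\ref{corolarymm}), your equivalence of $m_n$ and $\nu_n$ requires $l>0$ on all of $X\times\R_+$, i.e.\ $Y=X$ --- this should be stated as a hypothesis rather than ``the natural setting''; (ii) for the requirement that $g$ maps $\mathcal F_n$ into $\mathcal F_k$ you need $T_g(D_n)\subset D_k$, whereas $T_g^{-1}(D_n)\subset D_k$ is what the computation of $\nu_n$ needs; both hold by the same pinching of $\theta$, but the two directions should be kept straight; (iii) Theorem~\ref{thrma} is stated under the $L^1$ condition \eqref{poi3}, not the Hellinger condition \eqref{poi2} --- for finite equivalent intensities both are automatic, so simply verify \eqref{poi3} instead.
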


\begin{proof}
	The Borel $\sigma$-algebra $\mathcal B(\Gamma_{pf}(\hat X))$ may be identified as the minimal $\sigma$-algebra on $\Gamma_{pf}(\hat X)$ with respect to which any mapping of the following form is measurable:
	 \begin{equation}
	 \label{trtr7m8}
	 \Gamma_{pf}(\hat X)\ni\gamma\mapsto|\gamma\cap\Lambda|,\,\,\,\Lambda\in\mathcal B_0(\hat X),
	 \end{equation} 
	 see e.g. Section 1.1, in particular Lemma 1.4 in \cite{Kal}. 	 
 	 For each $n\in\mathbb N$, we denote by $\mathcal B_n(\Gamma_{pf}(\hat X))$ the minimal $\sigma$-algebra on $\Gamma_{pf}(\hat X)$ with respect to which each mapping of the form \eqref{trtr7m8} is measurable with $\Lambda\subset [\frac 1 n,\infty)\times X$. Obviously $(\mathcal B_n(\Gamma_{pf}(\hat X)))_{n=1}^\infty$ is a filtration and $\mathcal B(\Gamma_{pf}(\hat X))$ is the minimal $\sigma$-algebra on $\Gamma_{pf}(\hat X)$ which contains all $\mathcal B_n(\Gamma_{pf}(\hat X))$.
	 
	 Recall  \eqref{hyutw3r4}. Let  $\mathcal B_n(\mathbb K(X))$ denote the image of $\mathcal B_n(\Gamma_{pf}(\hat X))$ under the mapping $\mathcal R$.
	 Therefore, $(\mathcal B_n(\mathbb K(X)))_{n=1}^\infty$ is a filtration and $\mathcal B(\mathbb K(X))$ is the minimal $\sigma$-algebra on $\mathbb K(X)$ which contains all $\mathcal B_n(\mathbb K(X))$. 
	 
	 The following lemma follows immediately from the definition of $\mathcal B_n(\mathbb K(X))$.

	 \begin{lemma}\label{pfdr1r4}
	 	A function $F$ is $\mathcal B_n(\mathbb K(X))$-measurable if and only if $F$ is $\mathcal B(\mathbb K(X))$-measurable and for each $\eta=\sum_is_i\delta_
	{ x_i}\in \mathbb K(X)$
	\begin{equation}\label{uityi}	 	F(\eta)=F\left(\sum_{i:\,s_i\geq \frac 1 n}s_i\delta_{x_i} \right). \end{equation}
	 	\end{lemma}

		\begin{lemma}\label{yur685t}
		Let $g=(\varphi,\theta)\in\mathfrak G$. Let  $n\in\mathbb N$ and let $k\in\mathbb N$ be such that 
		\begin{equation}\label{vtyre7i5}
		\frac 1k\le\frac 1n\,\inf _{x\in X}\theta(x).\end{equation}
		 Then $g$ maps $\mathcal B_n(\mathbb K(X))$ into $\mathcal B_k(\mathbb K(X))$.
		\end{lemma}

		\begin{proof}
			Let $F:\mathbb K(X)\to[0,+\infty)$ be a $\mathcal B_n(\mathbb K(X))$-measurable function. Thus, by Lemma~\ref{pfdr1r4}, formula~\eqref{uityi} holds.
			 We note that the inverse element of $g=(\varphi, \theta)$ in the algebra $\mathfrak G$ is $g^{-1}=(\varphi^{-1},\theta^{-1}\circ \varphi)$. Let us the consider the function 
			 \begin{equation}
			 \label{ehukffhi}
			 \mathbb K(X)\ni\eta\mapsto F(g^{-1}\eta)\in[0,+\infty).
			 \end{equation}
			 This function is evidently $\mathcal B(\mathbb K(X))$-measurable. Then, by Lemma~\ref{pfdr1r4} and \eqref{vtyre7i5}, for $\eta=\sum_i s_i\delta_{x_i}\in\mathbb K(X)$,
			 \begin{align*}
			 F(g^{-1}\eta)&=F\left(\sum_i\theta^{-1}(\varphi(\varphi^{-1}(x_i)))s_i \delta_{\varphi^{-1}(x_i)}\right)\\
			 &=F\left(\sum_i\theta^{-1}(x_i)s_i\delta_{\varphi^{-1}(x_i)}\right)\\
			 &=F\left(\sum_{i:\,\theta^{-1}(x_i)s_i\geq\frac 1 n}\theta^{-1}(x_i)s_i\delta_{\varphi^{-1}(x_i)}\right)\\
			 &=F\left(\sum_{i:\,s_i\geq\frac 1 n\,\theta(x_i)}\theta^{-1}(x_i)s_i\delta_{\varphi^{-1}(\,x_i)}\right)\\
			 &=F\left(\sum_{i:\,s_i\geq\frac 1 n\inf_{x\in X}\theta(x)}\theta ^{-1}(x_i)s_i\delta_{\varphi^{-1}(x_i)}\right)\\
			 &=F\left(\sum_{i:\,s_i\geq\frac 1 k}\theta^{-1}(x_i)s_i\delta_{\varphi^{-1}(x_i)}\right)\\
			 &=F\left(g^{-1}\bigg(\sum_{i:\,s_i\geq\frac 1 k}s_i\delta_{x_{i}}\bigg)\right).
			 \end{align*}
			 Hence, by Lemma\ \ref{pfdr1r4}, the function $F(g^{-1}\cdot)$ is  $\mathcal B_k(\mathbb K(X))$-measurable.
			 	 	
Let $A\in\mathcal B_n(\mathbb K(X))$ and let $F=\chi_ A$.  
Thus, $F$ is  a $\mathcal B_n(\mathbb K(X))$-measurable function.
Therefore, $F(g^{-1}\cdot)=\chi_A(g^{-1}\cdot)$ is a $\mathcal B_k(\mathbb K(X))$-measurable function. But 
$$\chi_A(g^{-1}\eta)=\chi_{gA}(\eta),$$
which implies $gA\in\mathcal B_k(\mathbb K(X))$.
\end{proof}

	Next, let $F:\mathbb K(X)\to[0,+\infty]$ be measurable with respect to $\mathcal B_n(\mathbb K(X))$. 
	Let $g=(\varphi,\theta)\in\mathscr G$. Then 
		\begin{align}
\int_{\mathbb K(X)}F(\eta)d\mu^g_m(\eta)	&=\int_{\mathbb K(X)}F(g\eta)d\mu_m(\eta)\notag\\
	&=\int_{\mathbb K(X)}F(\theta(\varphi^*\eta))d\mu_m(\eta)\notag\\
	&=\int_{\mathbb K(X)}F(\theta \eta)d\mu_m^\varphi(\eta).\label{tyreu5643}
	\end{align}

Let $k\in\mathbb N$ be chosen so that
\begin{equation}\label{crte65u}
\frac 1k\le\frac 1n\,\inf _{x\in X}\theta^{-1}(x).\end{equation}
 It follows from the proof of this lemma that the function $\eta\mapsto F(\theta\eta)$ is $\mathcal B_k(\mathbb K(X)) $-measurable.

	 By the construction of the $\sigma$-algebra $\mathcal B_k(\Gamma _{pf}(\hat X))$, this $\sigma$-algebra can be identified with the $\sigma$-algebra $\mathcal B\left(\Gamma_{pf}\left(X\times\big[\frac 1 k,+\infty\big)\right)\right)$. More precisely, each set 
	 $$A\in\mathcal B\left(\Gamma_{pf}\left(X\times\big[\frac 1 k,+\infty\big)\right)\right)$$ is identified with the set 
	 
	 \begin{equation*}
	 \Big\{\gamma \in\Gamma_{pf}(\hat X\mid \gamma \cap\left(X\times\big[\frac 1 k,+\infty\big)\right)\in A\Big\}.\end{equation*}

	 Under this identification, the restriction of the Poisson measure $\pi_k$ on $\Gamma_{pf}(\hat X)$ to the $\sigma$-algebra $\mathcal B_k(\Gamma_{pf}(\hat X))=\mathcal B(\Gamma_{pf}\left(X\times\big[\frac 1 k,+\infty\big)\right)$ coincides with the Poisson measure $\pi_{m(k)}$, where $m(k)$ is the restriction of the measure $m$ to $X\times\big[\frac 1 k,+\infty\big)$.
	 
	 Note that, for each $\gamma\in \Gamma _{pf}\left(X\times\big[\frac 1 k,+\infty\big)\right)$ and each $\Lambda\in\mathcal B_0(X)$, 
	 $$\gamma\cap\left(\Lambda\times\left[\frac 1 k,+\infty\right) \right)$$ is a finite set. Hence, for each $\varphi\in\Dif_0(X)$, the mapping	 
	 \begin{equation}
	 \label{hdieyh8}
\gamma=\sum_i\delta_{(x_i,s_i)}\mapsto \varphi\gamma=\sum_i\delta_{(\varphi(x_i),s_i)}
	 \end{equation}
	 maps $\Gamma_{pf}\left(X\times\big[\frac 1 k,+\infty\big)\right)$ into $\Gamma_{pf}\left(X\times\big[\frac 1 k,+\infty\big)\right)$. We denote by $\mu_{m(k)}^{\varphi}$ the pushforward of the Poisson measure $\pi_{m(k)}$ under the transformation \eqref{hdieyh8}. Thus, we get
	 \begin{align}
	 \int_{\mathbb K(X)}F(\theta \eta)\,d\mu^\varphi_m(\eta)&=\int_{\Gamma_{pf}\left(X\times\big[\frac 1 k,+\infty\big)\right)}F(\theta (\mathcal R\gamma))\,d\pi_{m(k)}^\varphi(\gamma)
	 \notag\\
	 &=\int_{\Gamma_{pf}(\hat X)}F(\theta (\mathcal R\gamma))\,d\pi_{m(k)}^\varphi(\gamma)\notag\\
	 &=\int_{\mathbb K(X)}F(\theta \eta)\,d\mu_{m(k)}^\varphi(\gamma).\label{drdsrd56qw}
	 \end{align}
	 
	  By \eqref{Q3}, for each $\Lambda\in\mathcal B_0(X)$, 
	  
	   $$m\left(\Lambda\times\left[\frac 1 k,+\infty\right)\right)<\infty.$$
	   
Hence, by \eqref{tyreu5643}, \eqref{drdsrd56qw}, Theorem~\ref{hout8r6ode} and Corollary~\ref{corolarymm}, 
\begin{align*}
&\int_{\mathbb K(X)}F(\eta)d\mu^g_m(\eta)=\int_{\mathbb K(X)}F(\theta \eta)\,d\mu_{m(k)}^\varphi(\gamma)\\
&\quad=\int_{\mathbb K(X)}F(\theta\eta)\prod_{x\in\tau(\eta):s_x\geq\frac 1 k}\frac {l(\varphi^{-1}(x),s_x)}{l(x,s_x)}J_\varphi(x)\,d\mu_{m(k)}(\eta)\\
&\quad=\int_{\mathbb K(X)}F(\theta \eta)\prod_{x\in\tau(\eta):s_x\geq\frac 1 k}\frac {l(\varphi^{-1§}(x),s_x)}{l(x,s_x)}J_\varphi(x)\,d\mu_{m}(\eta)\\
&\quad=\int_{\mathbb K(X)}F(\theta \eta)\prod_{x\in\tau(\eta):\theta(x)s_x\geq\frac {\theta(x)} k}\frac {l(\varphi^{-1}(x),\theta^{-1}(x)\theta(x)s_x)}{l(x,\theta^{-1}(x)\theta(x)s_x)}J_\varphi(x)\,d\mu_{m}(\eta)\\
&\quad=\int_{\mathbb K(X)}F( \eta)\left(\prod_{x\in\tau(\eta):s_x\geq\frac {\theta(x)} k}\frac {l(\varphi^{-1}(x),\theta^{-1}(x)s_x)}{l(x,\theta^{-1}(x)s_x)}\right)\\
&\qquad\times
\exp\bigg[\int _X\log\left(\frac{l(x,\theta^{-1}(x)s_x)}{l(x,s_x)}\right)s_x^{-1}d\eta(x)
\\
&\quad\qquad+\int_X\int_{\mathbb{R}_+}\frac {\left(l(x,s)-l(x,\theta^{-1}(x)s)\right)} s \,ds\,dx\bigg] d\mu_{m}(\eta)\\
&\quad= \int_{\mathbb K(X)}F( \eta)R_g^{(n)}(\eta)\, d\mu_{m}(\eta),
\end{align*}
	  	where
		\begin{align*}
		R_g^{(n)}(\eta)&=\left(\prod_{x\in\tau(\eta):s_x\geq\frac {\theta(x)} k}\frac {l(\varphi^{-1}(x),\theta^{-1}(x)s_x)}{l(x,\theta^{-1}(x)s_x)}\right)\\
&\qquad\times
\exp\bigg[\int _X\log\left(\frac{l(x,\theta^{-1}(x)s_x)}{l(x,s_x)}\right)s_x^{-1}d\eta(x)
\\
&\quad\qquad+\int_X\int_{\mathbb{R}_+}\frac {\left(l(x,s)-l(x,\theta^{-1}(x)s)\right)} s \,ds\,dx\bigg].
\end{align*}
(Recall that $k$ depends on $n$ through \eqref{crte65u}.)
	\end{proof}

\begin{example}
Let $m$ be as in subsec.~\ref{tyd576e} and let 
the functions $\alpha \beta $ and $\beta$ belong to $L^1_{\mathrm{loc}}(X)$. Further assume that $\beta(x)>0$. Then by
Theorem~\ref{cyd75i}, Remark~\ref{vytd7i5}, and Theorem~\ref{yrde46}, the measure $\mu_m$ is partially quasi-invariant with respect to the action of the group $\mathfrak G$. By subsec.~\ref{gufy576ier}, the measure $\mu_m$ is not quasi-invariant with respect to the action of $\Dif_0(X)$, hence it is not quasi-invariant with respect to the action of $\mathfrak G$.
\end{example}

\section*{Acknowledgments}
EL is grateful to Yuri Kondratiev and Anatoly Vershik for numerous discussions on the subjects of the paper. 

\bibliographystyle{plain}
\clearpage

\end{document}